\justify\parbox[t]}
\newtheorem{theorem}{Theorem}[section]
\newtheorem{lemma}{Lemma}[section]
\newtheorem{corollary}{Corollary}[section]
\newtheorem{definition}{Definition}[section]
\newtheorem{remark}{Remark}[section]
\newtheorem{assumption}{Assumption}[section]
\newtheorem{scheme}{Scheme}[section]
\newtheorem{example}{Example}[section]
\def\D{{\mathcal D}}
\def\K{{\mathcal K}} 
\def\L{{\mathcal L}}
\def\R{{\mathbb  R}}
\def\P{{\mathbb  P}}
\def\ba{\mathbf{a}}
\def\bb{\mathbf{b}}
\def\bw{\mathbf{w}}
\def\bv{\mathbf{v}} 
\def\bz{\mathbf{z}}
\def\tk{\tau_{k+1}}
\def\tauk{\Omega^{\tau_{k+1}}}
\def\blx{\overline{\mathbf w}} 
\def\bx{\mathbf{w}}
\def\bpi{{\boldsymbol  \pi}}
\def\bva{{\boldsymbol  \varphi}}
\def\bg{{\boldsymbol  g}}
\def\ADMM{{\tt FedADMM}}
\def\FA{{\tt FedAvg}}
\def\FP{{\tt FedProx}} 
 \def\FAT{{\tt FedAlt}}
\def\FS{{\tt FedSim}} 
\begin{document}
\title{Federated Learning via Inexact ADMM 
}

\author{
{Shenglong Zhou and Geoffrey Ye Li}
\IEEEcompsocitemizethanks{
\IEEEcompsocthanksitem S.L. Zhou is with the School of Mathematics and Statistics, Beijing Jiaotong University, Beijing, China  (shlzhou@bjtu.ed.cn).
\IEEEcompsocthanksitem G. Y. Li is with the ITP Lab,  Department of Electrical and Electronic Engineering, Imperial College London, UK  (geoffrey.li@imperial.ac.uk). 

}

\thanks{~}}

\IEEEtitleabstractindextext{\justify
\begin{abstract} One of the crucial issues in federated learning is how to develop efficient optimization algorithms.   Most of the current ones require full device participation and/or impose strong assumptions for convergence.  Different from the widely-used gradient descent-based algorithms, in this paper,  we develop  an inexact alternating direction method of multipliers (ADMM), which is both computation- and communication-efficient,  capable of combating the stragglers' effect, and convergent under mild conditions. {Furthermore, it has a high numerical performance compared with several state-of-the-art algorithms for federated learning.}
\end{abstract}

\begin{IEEEkeywords}
Partial device participation, inexact ADMM,   communication and computation-efficiency,   global convergence
\end{IEEEkeywords}}

\maketitle


\IEEEdisplaynontitleabstractindextext

%
\IEEEpeerreviewmaketitle

\ifCLASSOPTIONcompsoc
\IEEEraisesectionheading{\section{Introduction} \label{sec:introduction}}
\else
\section{Introduction}
 \label{sec:introduction}
\fi

%
%
%
%
\IEEEPARstart{F}{ederated}  
learning (FL), originated from  \cite{konevcny2015federated, konevcny2016federated}, gains its popularity recently due to its ability to address various applications, such as vehicular communications \cite{samarakoon2019distributed, pokhrel2020federated,
elbir2020federated, posner2021federated}, digital health 
\cite{rieke2020future}, and mobile edge and over-the-air computing \cite{mao2017survey, yang2020federated, zhou2021communication, ye2022decentralized}. It is still facing many challenges. One of them is how to develop efficient optimization algorithms for different purposes, such as saving communication resources, accelerating the learning process, coping with the stragglers' effect,  just to name a few. We refer to some nice surveys \cite{kairouz2019advances,li2020federated,qin2021federated} for more challenges.
\subsection{Prior arts}\label{prior arts}
{Before  implementing FL into practical applications, many critical issues need to be addressed. We present a few of them that motivate our research in this paper.} 
\subsubsection{Communication efficiency}
{When exchanging parameters between clients and a central server, communication efficiency must be taken into account as frequent communications would consume expensive resources (e.g., transmission power, energy, and bandwidth). Popular techniques to improve communication efficiency include data compression and  reduction of communication rounds (CR). The former aims to quantize and sparsify the local parameters before the transmission so as to lessen the amount of the transmitted contents \cite{stich2018sparsified,di2018efficient,
wangni2018gradient,sattler2019robust}. For the latter, communications between the clients and the server occur in a periodic fashion so as to reduce CR
 \cite{mcmahan2017communication,li2019convergence,
 yu2019parallel,stich2018local, Lin2020Don}. In this paper, we will exploit this tactic.
}
\subsubsection{Computational efficiency}
{Since an increasing number of clients are engaging in the training, equipping all of them with strong computational capacities apparently is unrealistic. Hence, a desirable FL algorithm is able to reduce the computational complexity to alleviate clients' computational burdens. To achieve this goal, there are two promising solutions. The first one is the stochastic approximation of some critical items (e.g., the full gradients). This idea has been extensively exploited in the stochastic gradient descent (SGD) algorithms, such as the federated averaging ({\tt FedAvg} \cite{mcmahan2017communication}), local SGD  \cite{stich2018local, Lin2020Don}, and those in \cite{DeepLearning2015, AsynchronousStochastic2017, wang2021cooperative}. The second solution reduces the computational complexity by solving sub-problems inexactly, which has been widely adopted in the inexact ADMM. They allow clients to update their parameters via solving sub-problems approximately, thereby accelerating the computational speed exceptionally \cite{ding2019stochastic, Inexact-ADMM2021, ryu2022differentially, zhang2020fedpd}. We will take advantage of this technique in our algorithmic design.
 }

\subsubsection{Partial devices participation}
{Since the central server is unable to control the local devices and their communication environments, there may have delays/withdraw of sharing parameters by some clients due to inadequate transmission resources or limited computational capacity. This phenomenon is called the stragglers' effect,  namely, everyone waits for the slowest. A remedy for alleviating the effect is to let the central server pick  up a portion of clients in good conditions to take part in the training, which is known as the partial device participation  \cite{mcmahan2017communication,li2019convergence, li2020federatedprox}. Based on this scheme, FL algorithms can be categorized into two groups as follows.
}

{
 {a) \it Full device participation.}  There is an impressive body of work on developing algorithms based on full device participation, such as the non-stochastic gradient descent methods \cite{smith2018cocoa, LAG2018, wang2019adaptive, liu2021decentralized}, SGD \cite{yu2019parallel,stich2018local,  Lin2020Don, wang2021cooperative}, exact ADMM \cite{song2016fast, zhang2016dynamic, zhang2018improving, huang2019dp, elgabli2020fgadmm}, and inexact ADMM \cite{ding2019stochastic, Inexact-ADMM2021, ryu2022differentially, zhang2020fedpd}. However, due to the full device engagement, those algorithms  are at risk of the stragglers' effect, particularly in scenarios where large numbers of devices are distributed at the edge nodes. It is worth mentioning that ADMM-based methods have shown considerable popularity in solving the distributed optimization \cite{song2016fast,
zhang2018improving,huang2019dp,issaid2020communication,boyd2011distributed,zheng2018stackelberg, 
graf2019distributed}.}

%

 {
 {b) \it Partial device participation.} For realistic purpose, plentiful algorithms have been developed to choose partial devices for the training at every iteration, thereby enabling us to eliminate the stragglers' effect.  Popular representatives consist of {\tt FedAvg} \cite{mcmahan2017communication},  {\tt SCAFFOLD} \cite{karimireddy2020scaffold},  {\tt FedProx} \cite{li2020federatedprox}, {\tt FedAlt }\cite{collins2021exploiting, singhal2021federated, pillutla2022federated},  {\tt FedSim}  \cite{hanzely2021personalized, pillutla2022federated}, {\tt FedDCD} \cite{Fan2022}, and {\tt  FedSPD-DP} \cite{li2022federated}. The former five algorithms aim at solving the primal optimization problem while the latter two also investigate the dual problem.
 }
 
%

\subsection{Our contributions}
The main contribution of this paper is to develop  an inexact ADMM-based FL algorithm (\ADMM, see Algorithm \ref{algorithm-ICEADMM}) with the following advantages. 

\begin{itemize}[leftmargin=17pt]
 \item[a)] 
{ \it Communication and computation efficiency.} The framework states the global averaging occurs only at certain steps (e.g., at steps $k$ being a multiple of a pre-defined integer $k_0$). This means  CR can be affected by setting a proper $k_0$.  It is shown that the larger $k_0$ the fewer CR for our algorithm to converge, see Figure \ref{fig:effect-k0}. In addition to the communication efficiency, \ADMM\ allows selected clients to solve their sub-problems approximately with a flexible accuracy.  In this regard, clients can relax the accuracy to lessen the computational complexity.

 \item[b)]
{ \it Eliminating the stragglers' effect.} In \ADMM, at each round of communication, the sever divides  all clients into two groups. One group adopts  the inexact ADMM to update their parameters, while parameters in the second group remain unchanged, which means the server can put stragglers into the second group to diminish their impact on the training.

 \item[c)]
{  \it Convergence under mild conditions.} It is noted that most of the aforementioned algorithms in Section 
\ref{prior arts} impose relatively strong assumptions on the model to establish the convergence. Common assumptions comprise the gradient Lipschitz continuity (also known as L-smoothness), convexity, or strong convexity. 
However, we have proven that \ADMM\ converges to a stationary point of the learning optimization problem  with a sub-linear convergence rate only under two mild conditions: gradient Lipschitz continuity and the coerciveness of the objective function, see Theorems  \ref{global-convergence-inexact} and \ref{complexity-thorem-gradient-inexact}. 

  \item[d)]
 {  \it High numerical performance.} The numerical comparison with several state-of-the-art algorithms has demonstrated that \ADMM\ can learn the parameter using the fewest CR and  the shortest computational time. 

\end{itemize}

\subsection{Organization}
The paper is organized as follows.  In the next section, we provide some mathematical preliminaries.  In  Section \ref{sec:ADMMFL},  we present algorithm \ADMM, followed by highlighting its advantages.  We  establish its global convergence and convergence rate in Section \ref{sec:convergence}. Numerical comparison and concluding remarks   are given in the last two sections.

\section{Preliminaries}
In this section, we present the notation to be employed throughout this paper and introduce ADMM and FL. 
\subsection{Notation}
We use  plain,  bold, and capital letters to present scalars, vectors, and matrices, respectively, e.g., $k$ and $\sigma$ are scalars, $\bx$ and $\bpi$  are vectors, $W$ and $\Pi$ are matrices.  {Let $\lceil t\rceil$ be the largest
integer smaller than $t+1$ (e.g.,  $\lceil 1.1\rceil= \lceil 2\rceil=2$).} Denote $[m]:=\{1,2,\cdots ,m\}$ with `$:=$' meaning define and  $\R^n$ the $n$-dimensional Euclidean space equipped with inner product $\langle\cdot,\cdot\rangle$ defined by $\langle\bx,\bz\rangle:=\sum_i w_{i}z_i$. The $2$-norm is written as $\|\cdot\|$, i.e.,  $\|\bx\|^2=\langle\bx,\bx\rangle$.  
 Function  $f$ is said to be  gradient Lipschitz continuous with a constant $r>0$ if 
 \begin{eqnarray}\label{Lip-r} 
\|\nabla f(\bx)-\nabla f(\bz  ) \|  \leq r\| \bx -\bz  \| 
  \end{eqnarray}
for any two vectors $\bx$ and $\bz$, where $\nabla  f(\bx)$  is the gradient of $f$ with respect to $\bx$. Hereafter, for two groups of vectors $\bx_i$ and $\bpi_i$ in $\R^n$, we denote 
\begin{eqnarray*}
\begin{array}{lll}
 W:=(\bx_1,\bx_2,\cdots ,\bx_m),~~ \Pi :=(\bpi_1,\bpi_2,\cdots ,\bpi_m).
\end{array}\end{eqnarray*}
Similar rules are also applied for $W^k, W^*, W^\infty$ and $\Pi^k, \Pi^*, \Pi^\infty$. Here $k, *$ and $\infty$ mean the iteration number, optimality and accumulation, e.g., see Corollary \ref{L-global-convergence}. 
 \subsection{ADMM}
We refer to the earliest work \cite{gabay1976dual} and a nice book \cite{boyd2011distributed} for more details of  ADMM and briefly introduce it as follows:  Given an optimization problem,
\begin{eqnarray*} 
\begin{array}{cll}
\min\limits_{\bx\in\R^n,\bz\in\R^q}~f (\bx)+  g(\bz),~
{\rm s.t.}~~ A\bx+B\bz-\bb=0, 
\end{array}\end{eqnarray*}
where $A\in\R^{p\times n}$,  $B\in\R^{p\times q}$, and   $\bb\in\R^{p}$, its corresponding augmented Lagrange function  is  
\begin{eqnarray*}
\arraycolsep=1.0pt\def\arraystretch{1.5}
\begin{array}{lll}
\L(\bx,\bz ,\bpi) &:=& f (\bx)+  g(\bz) \\
&+& \langle A\bx+B\bz-\bb, \bpi \rangle + \frac{\sigma}{2}\|A\bx+B\bz-\bb\|^2,
\end{array}\end{eqnarray*}
where $\bpi$ is the Lagrange multiplier and $\sigma$ is a given positive constant. Then starting with an initial point $(\bx^0, \bz^0,\bpi^0)$, ADMM performs the following steps  iteratively,
\begin{eqnarray*}\label{framework-ADMM-0}
 \arraycolsep=1.0pt\def\arraystretch{1.5}
\left\{\begin{array}{llll} 
 \bx^{k+1} &=&  {\rm argmin}_{\bx \in\R^n}~\L(\bx,\bz^k ,\bpi^k), \\ 
  \bz^{k+1} &=& {\rm argmin}_{\bz\in\R^q}~\L(\bx^{k+1},\bz,\bpi^k), \\
  \bpi^{k+1} &=&    \bpi^{k} + \sigma (A\bx^{k+1}+B\bz^{k+1}-\bb).
\end{array} \right.
\end{eqnarray*}

\subsection{Federated learning}
Suppose we have $m$ local clients (or devices) with datasets $\{\D_1,\D_2,\cdots ,\D_m\}$. Each client has the loss 
\begin{eqnarray*} 
 \arraycolsep=1.0pt\def\arraystretch{1.5}
\begin{array}{llll}
f_i(\bx) := \frac{1}{d_i}\sum_{{\bf x}\in\D_i} \ell_i(\bx; {\bf x}),
\end{array} 
\end{eqnarray*}
  where $\ell_i(\cdot; {\bf x}):\R^n\mapsto\R$ is a continuous loss function and bounded from below, $d_i$ is the cardinality of $\D_i$,  and $\bx\in\R^n$ is the parameter to be learned. 
%
The overall loss function  can be defined by 
\begin{eqnarray*}\begin{array}{lll}
f(\bx) :=   \sum_{i=1}^{m}  \alpha_{i} f_i(\bx),
\end{array}\end{eqnarray*}
where  $\alpha_{i}$ is a positive weight satisfying $\sum_{i=1}^{m}  \alpha_{i}=1.$  
 The task of FL is to learn  an optimal parameter $\bx^*$  that minimizes the overall loss, namely,
\begin{eqnarray}\label{FL-opt}
 \arraycolsep=1.0pt\def\arraystretch{1.5}
\begin{array}{lll}
 \bx^*:=\underset{\bx\in\R^n }{\rm argmin} ~f(\bx).
\end{array}\end{eqnarray}
Since $f_i$ is supposed to be bounded from below, we have
\begin{eqnarray}\label{FL-opt-lower-bound}\begin{array}{lll}
 f^*:=f(\bx^*)>-\infty.
\end{array}\end{eqnarray}
 
 \begin{algorithm*}[!th]
\SetAlgoLined
{\noindent \justifying Initialize an integer $k_0>0$ and $\Omega^0=[m]$. Set $k = 0$. Denote $\tau_k:=\lceil k/k_0 \rceil$ and $\bg_i^{k}:=\alpha_{i}\nabla f_i(\bx_i^{k}).$  All clients $i\in[m]$ initialize $\epsilon_i^0,\sigma_i>0, \nu_{i}\in[1/2,1), \bx_i^0$, $\bpi_i^0=-\bg_i^{0}$,   $\bz_i^0=\sigma_i\bx_i^0+\bpi_i^0$ and send the sever $\sigma_i$ to  calculate $\sigma=\sum_{i=1}^m\sigma_i$. }


\For{$k=0,1,2,\cdots $}{

\If{$  k\in\K:=\{0,k_0,2k_0,3k_0,\cdots \}$}{

{\justify\underline{\it{Weights upload: (Communication occurs)}}  
Clients in $\Omega^{\tau_k}$ send their parameters $\{\bz^{k}_i:i\in \Omega^{\tau_k}\}$  to the server. } \\
\vspace{1mm}

{\justifying \noindent \underline{\it{Global averaging:}} 
 The server calculates average parameter $\bx^{\tk}$ by} 
\begin{eqnarray}\label{iceadmm-sub1}
 \begin{array}{llll}
\bx^{\tk} =     \frac{1}{\sigma}\sum_{i=1}^m  \bz^{k}_i.
\end{array}
\end{eqnarray}
{\justifying \noindent \underline{\it Weights feedback: (Communication occurs)} 
 The  server randomly selects clients in $[m]$ to form a subset $\tauk$ and broadcasts them parameter $\bx^{\tk}$.}  
}

\For{every $i\in \tauk$}{
 {\justifying \noindent \underline{\it Local update:} Client $i$ updates its parameters  as follows:} 
\begin{eqnarray} 
\label{iceadmm-sub20}  
\hspace{-18mm} &&
\begin{array}{llll}
\epsilon_i^{k+1} ~\leq~ \nu_{i}\epsilon_i^{k}, 
    \end{array}\\[1.25ex] 
\label{iceadmm-sub2}  
\hspace{-18mm} &&
\begin{array}{llll}
\text{Find~} \bx^{k+1}_i \text{~such that}~ \|\bg_i^{k+1}  +  \bpi_i^{k}+\sigma_i(\bx_i^{k+1}-\bx^{\tk})\|^2\leq \epsilon_i^{k+1} ~ \text{by solving ${\rm min}_{\bx_i} L(\bx^{\tk},\bx_i, \bpi^k)$,} 
    \end{array}\\[1.25ex]  
\label{iceadmm-sub3}  
\hspace{-18mm}&& 
\begin{array}{llll}   
\bpi^{k+1}_i ~=~    \bpi_i^{k} +\sigma_i(\bx_i^{k+1}-\bx^{\tk}),  
\end{array}\\[1.25ex] 
\label{iceadmm-sub4} 
\hspace{-18mm}&& 
\begin{array}{llll}   
\bz^{k+1}_i ~=~   \sigma _i \bx_i^{k+1}  + \bpi_i^{k+1}. 
\end{array}
\end{eqnarray}
 }
 
 \For{every $i\notin \tauk$}{
 {\justifying \noindent \underline{\it Local invariance:} Client $i$ keeps their parameters by} 
\begin{eqnarray}
\label{iceadmm-sub5}
\begin{array}{llll}   
(\epsilon_i^{k+1}, \bx^{k+1}_i,\bpi^{k+1}_i,\bz^{k+1}_i) =(\epsilon_i^{k}, \bx^{k}_i,\bpi^{k}_i,\bz^{k}_i). 
\end{array}
\end{eqnarray}
 }
 
}
\caption{FL via inexact ADMM (\ADMM) \label{algorithm-ICEADMM}}
\end{algorithm*}

\section{FL via Inexact ADMM}\label{sec:ADMMFL}
 By introducing auxiliary variables, $\bx_i=\bx, i\in[m]$,  problem  \eqref{FL-opt} can be rewritten as the following form,
\begin{eqnarray}\label{FL-opt-ver1}\begin{array}{lll}
 \underset{ \bx, W}{\min}~~  \sum_{i=1}^{m}  \alpha_{i} f_i(\bx_i),~~{\rm s.t.}~~\bx_i=\bx,~i\in[m].
\end{array}\end{eqnarray}
Throughout the paper, we shall focus on the above optimization problem instead of problem  \eqref{FL-opt} as they are equivalent to each other. For simplicity, we also denote
\begin{eqnarray}\label{FX}
\begin{array}{lll}
F(W):=  \sum_{i=1}^{m} \alpha_{i} f_i(\bx_i).
\end{array}\end{eqnarray}
Clearly, $F(\bx,\bx,\cdots ,\bx)=f(\bx)$.

 \subsection{Algorithmic design}
 
  To implement ADMM for our problem \eqref{FL-opt-ver1},  
the corresponding augmented Lagrange function  can be defined by,  
\begin{eqnarray} \label{Def-L}
\arraycolsep=0pt\def\arraystretch{1.5}
\begin{array}{lll}
 \L(\bx,W,\Pi)&:=& \sum_{i=1}^m  L(\bx,\bx_i ,\bpi_i)\\
L(\bx,\bx_i ,\bpi_i)&:=& \alpha_{i} f_i(\bx_i){+}    \langle \bx_i{-}\bx, \bpi_i\rangle {+} \frac{\sigma_i}{2}\|\bx_i{-}\bx\|^2,
\end{array}\end{eqnarray}
where $ \Pi$ is the Lagrange multiplier, and $\sigma _i>0,i\in[m]$. The framework of ADMM for problem \eqref{FL-opt-ver1} is given as follows: For an initial  point  $(\bx^0, W^0, \Pi^0)$ and any $k\geq0$, perform the following updates iteratively,
\begin{eqnarray}\label{framework-ADMM-equ}
 \arraycolsep=1.0pt\def\arraystretch{1.5}
\left\{\begin{array}{llll} 
 \bx^{k+1} &=&   {\rm argmin}_{\bx}~\L(\bx,W^k, \Pi^k),\\ &=&\frac{1}{\sum_{i=1}^{m} \sigma _i}\sum_{i=1}^{m}  ( \sigma _i\bx^{k}_i+\bpi^k_i), \\ 
  \bx^{k+1}_i &=& {\rm argmin}_{\bx_i}L(\bx^{k+1},\bx_i, \bpi^k),~~& i\in[m], \\
  \bpi^{k+1}_i &=&    \bpi_i^{k} + \sigma _i(\bx_i^{k+1}-\bx^{k+1}),~~& i\in[m].
\end{array} \right.
\end{eqnarray} 
{To employ the above framework into FL, we treat $\bx^{k+1}$ as the global parameter updated by a central server and $(\bx^{k+1}_i, \bpi^{k+1}_i)$ as the local parameters updated by local client $i\in[m]$. However, this framework encounters several drawbacks. i) It repeats the three updates at every step, which means that the local clients and the central server have to communicate at every step, leading to communication inefficiency. ii) Solving the second sub-problem in  \eqref{framework-ADMM-equ} may incur an expensive computational cost as it generally does not admit a closed-form solution.   iii) Due to inadequate transmission resources or limited computational capacity, some clients may delay sharing parameters (i.e., stragglers' effect). So it is necessary to avoid selecting these clients. } 

{
To overcome these drawbacks, we cast a new algorithm into Algorithm  \ref{algorithm-ICEADMM} which aims at i) reducing communication rounds by averaging parameters only at certain steps (i.e., \eqref{iceadmm-sub1} occurs when $k\in\K$), ii) alleviating the computational burdens for clients by solving their sub-problems inexactly (i.e., computing \eqref{iceadmm-sub2}), and iii) diminishing the stragglers' effect by selecting a portion of clients (i.e., clients in $\tauk$) to join in the training at every step. More precisely, we have the following advantageous properties.}
\subsection{Communication efficiency}

Directly performing  framework \eqref{framework-ADMM-equ}  in an FL setting leads to the communication between local clients and the central server at every step, which would consume large amounts of communication resources (e.g., power and bandwidth).   Therefore, in Algorithm  \ref{algorithm-ICEADMM}, we allow a portion of clients (i.e.,  clients in $\tauk$) to update their parameters a few times (i.e., $k_0$ times) and then upload them to the central server. In other words, the central server collects parameters from local clients only at step $k\in\K=\{0,k_0,2k_0,3k_0,\cdots \}$. Here, choosing a proper $k_0$ can reduce CR significantly. It is worth mentioning that such an idea has been extensively employed in \cite{yu2019parallel,stich2018local, Lin2020Don, DeepLearning2015,AsynchronousStochastic2017,  wang2021cooperative}.

 \subsection{Fast computation} We emphasize that $\bx_i^{k+1}$ in \eqref{iceadmm-sub2}  is well defined. In fact, let $\bv_i^{*}$ be any optimal solution to  ${\rm min}_{\bx_i} L(\bx^{\tk},\bx_i, \bpi^k)$. Then it satisfies the following optimality condition,
 \begin{eqnarray}\label{update-w-*}
   \arraycolsep=1.0pt\def\arraystretch{1.5}
 \begin{array}{llll}
\alpha_{i}\nabla f_i(\bv_i^{*})  +  \bpi_i^{k}+\sigma_i(\bv_i^{*}-\bx^{\tk})=0.
 \end{array}
\end{eqnarray}  
 This means there always exists a point satisfying the condition in \eqref{iceadmm-sub2}.  Since ${\rm min}_{\bx_i} L(\bx^{\tk},\bx_i, \bpi^k)$ is an unconstrained optimization problem,  many solvers can be used to solve it. However, we are interested in algorithms that can find  $\bx_i^{k+1}$ quickly. Particularly, we initialize $\bv_i^0=\bx^{\tk}$, and perform the following steps, for $\ell=0,1,2,\cdots ,\kappa$,
\begin{eqnarray}\label{update-w-l}
   \arraycolsep=1.0pt\def\arraystretch{1.5}
 \begin{array}{llll}
&& \bv_i^{\ell+1}\\ 
  &=& {\rm argmin}_{\bx_i}~\langle \bx_i-\bx^{\tk}, \bpi_i^k\rangle + \frac{\sigma_i}{2}\|\bx_i-\bx^{\tk}\|^2+\\
 &&  \alpha_{i} (f_i(\bv _i^{\ell}) + \langle \nabla f_i(\bv _i^{\ell}), \bx_i- \bv _i^{\ell}  \rangle  + \frac{r_i}{2}\|\bx_i-\bv _i^{\ell}\|^2)\\
&=& \frac{1}{\alpha_{i}r_i   +\sigma_i}(\alpha_{i}r_i \bv_i^{\ell} +\sigma_i \bx^{\tk} - ( \alpha_{i}\nabla f_i(\bv_i^{\ell}) +  \bpi_i^{k})),
 \end{array}
\end{eqnarray}
where $r_i>0$ which can be set as the Lipschitz continuous constant if $f_i$ is Lipschitz continuous and $\kappa$ is a given maximum number of steps to update $\bv_i^{\ell}$.  The following theorem states that using (\ref{update-w-l})  to find $\bx_i^{k+1}{=}\bv_i^{\kappa+1}$ can guarantee the condition in (\ref{iceadmm-sub2}) within a small number of iterations $\kappa$. 
\begin{theorem}\label{finite-stop} Suppose that every $f_i, i\in[m]$ is gradient Lipschitz continuous with  $r_i>0$ and Hessian matrix $ \nabla^2 f_i \succeq -s_i I$ with $s_i\geq0$. By setting $\sigma_i\geq \alpha_{i} s_i+\varrho  \alpha_{i}r_i/2 $ with $\varrho>1$,  client $i\in[m]$ can find $\bx_i^{k+1}=\bv_i^{\kappa+1}$ such that (\ref{iceadmm-sub2}) through (\ref{update-w-l}) with at most $\kappa$ steps, where  
    \begin{eqnarray} \label{finite-stops}
   \begin{array}{lll}
\kappa = \log_{\varrho}\left \lceil \frac{2(\alpha_{i}^2r_i^2+\sigma_i^2)\|\bx^{\tk}-\bv_i^{*}\|^2}{\epsilon_i^{k+1}}  \right \rceil -1.
    \end{array} 
 \end{eqnarray}  
 \end{theorem}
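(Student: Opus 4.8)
The plan is to read the inner loop \eqref{update-w-l} as a strongly contractive fixed-point iteration converging to the unique minimizer $\bv_i^{*}$ of the $i$th subproblem, and then to convert the distance $\|\bv_i^{\kappa+1}-\bv_i^{*}\|$ into a bound on the residual that appears in \eqref{iceadmm-sub2}. First I would note that, because $\sigma_i\geq\alpha_i s_i+\varrho\alpha_i r_i/2>\alpha_i s_i$, the subproblem objective has Hessian $\alpha_i\nabla^2 f_i+\sigma_i I\succeq(\sigma_i-\alpha_i s_i)I\succ 0$, hence is strongly convex; thus $\bv_i^{*}$ is unique and is characterized by \eqref{update-w-*}, i.e.\ the residual map $R(\bx_i):=\alpha_i\nabla f_i(\bx_i)+\bpi_i^{k}+\sigma_i(\bx_i-\bx^{\tk})$ vanishes at $\bv_i^{*}$. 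One checks directly that $\bv_i^{*}$ is then a fixed point of the affine-in-gradient map defining \eqref{update-w-l}.

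Next I would establish the contraction. Subtracting the fixed-point identity from the update for $\bv_i^{\ell+1}$ and writing the gradient increment in integral Hessian form $\nabla f_i(\bv_i^{\ell})-\nabla f_i(\bv_i^{*})=H_\ell(\bv_i^{\ell}-\bv_i^{*})$ with $H_\ell:=\int_0^1\nabla^2 f_i(\bv_i^{*}+t(\bv_i^{\ell}-\bv_i^{*}))\,dt$, gives the linear recursion $\bv_i^{\ell+1}-\bv_i^{*}=\tfrac{\alpha_i}{\alpha_i r_i+\sigma_i}(r_i I-H_\ell)(\bv_i^{\ell}-\bv_i^{*})$. Gradient Lipschitz continuity yields $\nabla^2 f_i\preceq r_i I$, and the hypothesis $\nabla^2 f_i\succeq -s_i I$ (with $s_i\leq r_i$ without loss of generality, since $\nabla^2 f_i\succeq -r_i I$ always) gives $0\preceq r_i I-H_\ell\preceq(r_i+s_i)I$. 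Hence $\|\bv_i^{\ell+1}-\bv_i^{*}\|\leq q\,\|\bv_i^{\ell}-\bv_i^{*}\|$ with $q:=\alpha_i(r_i+s_i)/(\alpha_i r_i+\sigma_i)$.

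The crux is showing $q\leq 1/\sqrt{\varrho}$. Substituting $\sigma_i\geq\alpha_i s_i+\varrho\alpha_i r_i/2$ into the denominator reduces this to $(\sqrt{\varrho}-1)(r_i+s_i)\leq\varrho r_i/2$, and using $s_i\leq r_i$ it suffices that $2(\sqrt{\varrho}-1)\leq\varrho/2$, i.e.\ $(\sqrt{\varrho}-2)^2\geq 0$, which holds. Iterating from $\bv_i^{0}=\bx^{\tk}$ then yields $\|\bv_i^{\kappa+1}-\bv_i^{*}\|^2\leq\varrho^{-(\kappa+1)}\|\bx^{\tk}-\bv_i^{*}\|^2$.

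Finally I would translate distance into residual. Since $R(\bv_i^{*})=0$, the left-hand side of \eqref{iceadmm-sub2} at $\bx_i^{k+1}=\bv_i^{\kappa+1}$ equals $\|\alpha_i(\nabla f_i(\bv_i^{\kappa+1})-\nabla f_i(\bv_i^{*}))+\sigma_i(\bv_i^{\kappa+1}-\bv_i^{*})\|^2$; applying $\|\bu+\bv\|^2\leq 2\|\bu\|^2+2\|\bv\|^2$ together with Lipschitz continuity bounds it by $2(\alpha_i^2 r_i^2+\sigma_i^2)\|\bv_i^{\kappa+1}-\bv_i^{*}\|^2\leq 2(\alpha_i^2 r_i^2+\sigma_i^2)\varrho^{-(\kappa+1)}\|\bx^{\tk}-\bv_i^{*}\|^2$. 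Requiring the right-hand side to be at most $\epsilon_i^{k+1}$ and solving for $\kappa$ produces exactly \eqref{finite-stops}, the ceiling there merely ensuring an admissible nonnegative integer count. I expect the contraction estimate $q\leq 1/\sqrt{\varrho}$ to be the main obstacle, since it is the one place where the two-sided Hessian bound, the prescribed lower bound on $\sigma_i$, and the reduction $s_i\leq r_i$ must be combined precisely; the remaining steps are routine norm manipulations.
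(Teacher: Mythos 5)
Your proof is correct, and while it follows the same two-stage skeleton as the paper's---first establish the per-step contraction \eqref{v-ell-v-*}, then convert the distance $\|\bv_i^{\kappa+1}-\bv_i^{*}\|$ into the residual of \eqref{iceadmm-sub2} via \eqref{update-w-*}, the factor-two norm inequality, and Lipschitz continuity (your final step is essentially identical to the paper's)---your proof of the contraction itself takes a genuinely different route. The paper argues variationally: it subtracts \eqref{update-w-*} from the optimality condition of \eqref{update-w-l}, tests the resulting identity against $\bv_i^{\ell+1}-\bv_i^{*}$, and applies the monotonicity bound \eqref{Lip-continuity-lowbd} together with Cauchy--Schwarz; after the cross terms cancel, this leaves $(\sigma_i-\alpha_{i}s_i)\|\bv_i^{\ell+1}-\bv_i^{*}\|^2\leq\frac{\alpha_{i}r_i}{2}\|\bv_i^{\ell}-\bv_i^{*}\|^2$, and \eqref{v-ell-v-*} follows at once from $\sigma_i-\alpha_{i}s_i\geq\varrho\alpha_{i}r_i/2$, with no need for $s_i\leq r_i$ or any auxiliary algebraic inequality. (Note in passing that the sign of the term $\frac{\alpha_{i}r_i}{2}\|\bv_i^{\ell}-\bv_i^{*}\|^2$ in the paper's displayed chain is a typo---it must be $+$, otherwise the display would force $\bv_i^{\ell+1}=\bv_i^{*}$; the corrected chain is exactly the cancellation just described.) You instead write the explicit linear error recursion $\bv_i^{\ell+1}-\bv_i^{*}=\frac{\alpha_{i}}{\alpha_{i}r_i+\sigma_i}(r_iI-H_\ell)(\bv_i^{\ell}-\bv_i^{*})$ with the averaged Hessian $H_\ell$, and bound the spectral norm of $r_iI-H_\ell$ by $r_i+s_i$ using the two-sided curvature bounds, which forces the detour through $q\leq 1/\sqrt{\varrho}$, the (correctly justified) reduction $s_i\leq r_i$, and the observation $(\sqrt{\varrho}-2)^2\geq 0$. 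Your route buys transparency: the fixed-point structure of \eqref{update-w-l} and the exact role of each curvature bound are laid bare in pure linear-algebra terms. The paper's route buys economy and sharpness: the factor $\varrho^{-1}$ drops directly out of the inner-product cancellation under the stated hypothesis on $\sigma_i$ alone, whereas your chain of estimates is tight (e.g., at $s_i=r_i$, $\varrho=4$), so none of your reductions has slack to spare. Neither argument is more general than the other: both require twice differentiability, yours through $H_\ell$ and the paper's through \eqref{Lip-continuity-lowbd}.
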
  
Here, $  \nabla^2 f_i \succeq -s_i I$  stands for $  \nabla^2 f_i + s_i I\succeq 0 $, a positive semi-definite matrix. All convex and plentiful non-convex  functions  satisfy this condition. For convex functions, we could choose $s_i=0$.  Based on \eqref{finite-stops}, clients can set a slightly large accuracy $\epsilon_i^{k+1}$ to ensure a small  $\kappa$  for the sake of  fastening their computation.  Therefore, the computational cost can be saved in comparison with solving the second sub-problem of \eqref{framework-ADMM-equ} exactly.

\subsection{Coping with straggler's effect}
 The framework  of \ADMM\ integrates partial device participation and hence can deal with the straggler's effect. {According to \cite{li2019convergence}, this can be done as follows: By setting a threshold $m_0\in[1,m)$, the   server collects the outputs of the first $m_0$ responded clients (to form $\tauk$). After collecting  $m_0$ outputs, the server stops waiting for the rest, namely the rest clients are deemed as stragglers in this iteration.} 
 
 Partial device participation has been exploited by \FA\ \cite{mcmahan2017communication},  \FP\ \cite{li2020federatedprox}, \FAT\ \cite{pillutla2022federated}, and so forth. Here, \FA\ presented in Algorithm \ref{algorithm-FA} corresponds to the case of $B=\infty$ in its original version. 
 There are some difference among these algorithms and ours.  First of all,  the global averaging for \FP\ and \FAT\ is taken on the selected clients in $\Omega^{\tau_k}$, that is,
 \begin{eqnarray}\label{FP-sub1}
\arraycolsep=1.0pt\def\arraystretch{1.5} 
 \begin{array}{lcll}
\bx^{\tk} =     \frac{1}{|\Omega^{\tau_k}|}\sum_{i\in\Omega^{\tau_k} }  \bx^{k}_i,
\end{array}
\end{eqnarray}
while  \ADMM\ and \FA\ assemble parameters of all clients, see  \eqref{iceadmm-sub1} and \eqref{FA-sub1}. Moreover, the other three algorithms average parameters $\bx^{k}_i$ directly while \ADMM\ aggregates  $\bz^{k}_i$ which is a combination of  primal variable  $\bx^{k}_i$ and dual variable $\bpi^{k}_i$.  To this end, it is more secured to protect clients' data when communicating with the server.

\begin{algorithm} 
\SetAlgoLined
{\noindent \justifying Initialize an integer $k_0, \gamma>0$ and $\Omega^0=[m]$. Set $k = 0$. 
  All clients $i\in[m]$ initialize $ \bx_i^0=0$. }


\For{$k=0,1,2,\cdots $}{

\If{$  k\in\K:=\{0,k_0,2k_0,3k_0,\cdots \}$}{

\underline{\it{Weights upload: (Communication occurs)}}\\  
{\justify Clients in $\Omega^{\tau_k}$ send $\{\bx_i^{k}:i\in \Omega^{\tau_k}\}$  to the server. } \\
\vspace{2mm}

{\justifying \noindent \underline{\it{Global averaging:}}
 The server averages $\bx^{\tk}$ by} 
\begin{eqnarray}\label{FA-sub1}
\arraycolsep=1.0pt\def\arraystretch{1.5} 
 \begin{array}{lcll}
\bx^{\tk} =   \frac{1}{m}\sum_{i=1}^m  \bx^{k}_i.
\end{array}
\end{eqnarray}
 
\underline{\it Weights feedback: (Communication occurs)}

{\justifying \noindent The  server randomly selects clients to form $\tauk$ and broadcasts them $\bx^{\tk}$. } 
 }
 }

\For{every $i\in \tauk$}{
 {\justifying \noindent \underline{\it Local update:} Client $i$ updates its parameters by} 
\begin{eqnarray*} 
\label{fedvag}   
\arraycolsep=1pt\def\arraystretch{1.5}
\begin{array}{lcll}
\bx^{k+1}_i = \left\{ \begin{array}{llll}
\bx^{\tk} - \frac{\gamma}{m}    \nabla f_i(\bx^{\tk}),&~ k\in\K, \\
\bx_i^k-\frac{\gamma}{m}    \nabla f_i(\bx_i^k),&~ k\notin\K. 
    \end{array}\right.
 \end{array}   \end{eqnarray*}
 \For{every $i\notin \tauk$}{
 {\justifying \noindent \underline{\it Local invariance:} Client $i$ keeps $\bx^{k+1}_i=\bx^{k}_i$.} 
 }
 
}
\caption{\FA. \label{algorithm-FA}}
\end{algorithm}

\subsection{Local invariance} We would like to point out that clients outside $\Omega^{\tau_k}$ do nothing at steps $k,k+1,\cdots ,k+k_0-1$. We use \eqref{iceadmm-sub5} for the purpose of notational convenience when conducting convergence analysis. Moreover, \eqref{iceadmm-sub5} also allows the server to record the previous uploaded parameters from clients outside $\Omega^{\tau_k}$. Precisely, for any $i\notin\Omega^{\tau_k}$, at step $k\in\K$, let $k_i$ be the largest integer in $[0,k)$ such that $i\in\Omega^{\tau_{k_i}}$. In other words, $k_i$ is the last time that client $i$ was selected. Hence,  \eqref{iceadmm-sub5} implies $\bz_i^t\equiv \bz_i^{k_i}, \forall t=k_i,k_i+1,\cdots ,k$ and thus
\begin{eqnarray*}
   \arraycolsep=1.0pt\def\arraystretch{1.5}
 \qquad\begin{array}{llll}
\bx^{\tk} =   \frac{1}{\sigma}(\sum_{i\in \Omega^{\tau_k}} \bz^{k}_i + \sum_{i\notin \Omega^{\tau_k}} \bz^{k_i}_i) 
=  \frac{1}{\sigma}\sum_{i=1}^m  \bz^{k}_i,
\end{array}
\end{eqnarray*}
which manifests that the sever uses the previously uploaded parameters (i.e., $\bz^{k_i}_i$) from  clients outside $\Omega^{\tau_k}$ and the currently uploaded parameters (i.e., $\bz^{k}_i$) from  clients in $\Omega^{\tau_k}$.
 
\section{Convergence analysis}\label{sec:convergence}
We aim  to establish the  global convergence and convergence rate  for \ADMM\ in this section, before which we define the optimality conditions of problems\  \eqref{FL-opt-ver1} and \eqref{FL-opt} as follows.  
\subsection{Stationary point} 
\begin{definition}\label{def-sta} A point $(\bx^*, W^*,\Pi^*)$ is a stationary point of   problem  (\ref{FL-opt-ver1}) if it satisfies 
\begin{eqnarray}\label{opt-con-FL-opt-ver1}
 \arraycolsep=1.0pt\def\arraystretch{1.5}
  \left\{\begin{array}{rcll}
 \alpha_{i}\nabla  f_i(\bx_i^*)+\bpi_i^* &=&   0, ~~&i\in[m],  \\ 
 \bx_i^*-\bx^* &=&0,&i\in[m],\\ 
  \sum_{i=1}^{m} \bpi_i^* &=& 0.
\end{array} \right.
\end{eqnarray} 
\end{definition}
It is not difficult to prove that any locally optimal solution to  problem  \eqref{FL-opt-ver1} must satisfy \eqref{opt-con-FL-opt-ver1}. If  $f_i$ is convex for every $i\in[m]$, then a point  is a globally optimal solution if and only if it satisfies condition \eqref{opt-con-FL-opt-ver1}.   Moreover,  a stationary point $(\bx^*, W^*,\Pi^*)$  of   problem  \eqref{FL-opt-ver1} indicates
\begin{eqnarray} \label{grad-x-*=0}
\begin{array}{llll}
\nabla   f(\bx^*) = \sum_{i=1}^{m}   \alpha_{i} \nabla   f_i(\bx^*) =  - \sum_{i=1}^{m}  \bpi_i^*=0.\end{array}
\end{eqnarray} 
That is, $\bx^*$ is also a stationary point of problem  \eqref{FL-opt}.  
\subsection{Some assumptions}
\begin{assumption}\label{ass-fi} Every $f_i, i\in[m]$ is gradient Lipschitz continuous with a constant $r_i>0$. 
\end{assumption}
\begin{assumption}\label{ass-f} Function $f$ is coercive. That is, $ f(\bx)\rightarrow+\infty$ when $\|\bx\|\rightarrow +\infty$.    
\end{assumption}
\begin{scheme}\label{ass-omega} The sever randomly selects $\Omega^\tau$ that satisfies
\begin{eqnarray*}  
   \begin{array}{lll} \Omega^{\tau+1} \cup \Omega^{\tau+2}\cup\cdots \cup\Omega^{\tau+s_0}=[m],~~
   \forall \tau=0,s_0,2s_0,\cdots      \end{array}
 \end{eqnarray*} 
 where $s_0$ is a pre-defined postive integer.
\end{scheme}
Such a scheme indicates that for each group of $s_0$  sets $\{\Omega^{\tau+1}, \Omega^{\tau+2},\cdots,\Omega^{\tau+s_0}\}$, all clients should be chosen at least once. In other words, for any client $i\in[m]$, the maximum gap between its two consecutive selections is no more than $s_0$, namely,
\begin{eqnarray}  
\label{scheme-omega-2T}
   \begin{array}{lll} \max \left\{u-v: 
   \begin{array}{l}
   i\in\Omega^{v},  i\in\Omega^{u},\\[.5ex] 
   i\notin \Omega^{\tau}, \tau=v+1,\cdots ,u-1
   \end{array}
   \right\}\leq s_0.
   \end{array}
 \end{eqnarray} 
\begin{remark} \label{rem:omega} Scheme \ref{ass-omega} can be satisfied with a high probability. In fact, if $\Omega^1, \Omega^2,\cdots$ are selected independent and indices in $\Omega^{\tau}$ are uniformly sampled from $[m]$ without replacement, then the probability of client $i$ being selected in $\{\Omega^{\tau+1}, \Omega^{\tau+2},\cdots,\Omega^{\tau+s_0}\}$ is
\begin{eqnarray*} 
\arraycolsep=1.0pt\def\arraystretch{1.5}
\begin{array}{lrl}
p_i&:=&1-\P(i\notin\Omega^{\tau+1}, i\notin\Omega^{\tau+2},\cdots,i\notin\Omega^{\tau+s_0})\\
& =&1-\P(i\notin\Omega^{\tau+1})\P(i\notin\Omega^{\tau+2})\cdots\P(i\notin\Omega^{\tau+s_0})\\
&=&1-(1- \frac{|\Omega^{\tau+1}|}{m})(1- \frac{|\Omega^{\tau+2}|}{m})\cdots(1- \frac{|\Omega^{\tau+s_0}|}{m}),
\end{array} \end{eqnarray*}  
which tends to $1$. For example, $p_i=1-10^{-5}$ if $s_0=5$ and $|\Omega^{\tau}|=0.9m$ for any $ \tau\geq 1$.  
\end{remark}
\subsection{Global convergence} 
{The sketch of showing the convergence is as follows: by defining sequence $\{\widetilde\L^k\}$  as
\begin{eqnarray} 
 \label{def-c}   
 \arraycolsep=1.0pt\def\arraystretch{1.5}
 \begin{array}{lll} 
 \widetilde\L^k&:=&\L^k + \sum_{i=1}^m\frac{29 \epsilon_i^{k} }{(1-\nu_{i})\sigma_i},\\
 \L^{k}&:=&\L(\bx^{\tau_k},W^{k},\Pi^{k}),
    \end{array}
    \end{eqnarray} 
we first prove its decreasing property with a descent scale $ \sum_{i=1}^{m} \frac{\sigma _i}{10}(\|\bx^{\tk}{-}\bx^{\tau_k} \|^2 {+} \|\bx_i^{k+1}{-}\bx_i^{k}\|^2)$. It allows us to claim the convergence of $\{\widetilde\L^k\}$ and the vanishing of $\|\bx^{\tk}-\bx^{\tau_k} \|, \|\bx_i^{k+1}-\bx_i^{k}\|$, and $\|\bx_i^{k}-\bx^{\tau_k}\|$. Then these properties enable us to obtain   \eqref{L-local-convergence-limit-inexact} and \eqref{L-local-convergence-limit-grad-inexact}, which together with the optimality conditions shows the convergence of sequence $\{(\bx^{\tau_k},W^{k},\Pi^{k})\}$ itself.} Therefore, we first 
establish the decreasing property of   sequence $\{\widetilde\L^k\}$. 
\begin{lemma}\label{lemma-decreasing-1} Under Assumption \ref{ass-fi}, it holds that
\begin{eqnarray*}   
 \arraycolsep=1.0pt\def\arraystretch{1.5}
 \begin{array}{lll}
 ~~~~\widetilde\L^k{-} \widetilde\L^{k+1} { \geq}   \sum_{i=1}^{m} \frac{\sigma _i}{10}(\|\bx^{\tk}{-}\bx^{\tau_k} \|^2 {+}    \|\bx_i^{k+1}{-}\bx_i^{k}\|^2).
    \end{array}  
 \end{eqnarray*}    
    \end{lemma}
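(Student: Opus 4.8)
\emph{Proof proposal.} The plan is to bound the decrease of $\widetilde\L^k$ by treating the three blocks of the ADMM update ($\bx$, $W$, $\Pi$) one at a time and then balancing the inexactness errors against the built-in correction $\sum_{i=1}^m 29\epsilon_i^k/((1-\nu_i)\sigma_i)$. Before touching $\L$, I would first record a uniform \emph{inexactness invariant}: for every $i\in[m]$ and every $k$,
\begin{equation*}
\|\bg_i^{k}+\bpi_i^{k}\|^2\le \epsilon_i^{k}.
\end{equation*}
This follows by induction on $k$. At initialization $\bpi_i^0=-\bg_i^0$ makes the left side vanish; for a selected client $i\in\tauk$, inserting the dual update \eqref{iceadmm-sub3} into the stopping rule \eqref{iceadmm-sub2} gives $\|\bg_i^{k+1}+\bpi_i^{k+1}\|^2\le\epsilon_i^{k+1}$; and for an unselected client the local invariance \eqref{iceadmm-sub5} preserves both sides. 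This invariant is the tool that converts bounds on dual increments into bounds on primal increments.

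Next I would telescope $\L^k-\L^{k+1}=T_x+T_W+T_\Pi$, where $T_x$ holds $(W^k,\Pi^k)$ fixed and moves $\bx^{\tau_k}\mapsto\bx^{\tk}$, $T_W$ holds $(\bx^{\tk},\Pi^k)$ fixed and moves $W^k\mapsto W^{k+1}$, and $T_\Pi$ holds $(\bx^{\tk},W^{k+1})$ fixed and moves $\Pi^k\mapsto\Pi^{k+1}$. For $T_x$: when $k\in\K$, the averaging step \eqref{iceadmm-sub1} is exactly the minimizer of the $\sigma$-strongly convex map $\bx\mapsto\L(\bx,W^k,\Pi^k)$ with $\sigma=\sum_{i}\sigma_i$, so $T_x\ge\frac{\sigma}{2}\|\bx^{\tk}-\bx^{\tau_k}\|^2$; when $k\notin\K$ both sides vanish and the bound is trivial. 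For $T_W$, only $i\in\tauk$ contribute; I would use that $\phi_i:=L(\bx^{\tk},\cdot,\bpi_i^k)$ is strongly convex with modulus $\mu_i=\sigma_i-\alpha_i r_i$ (Assumption \ref{ass-fi} renders $f_i$ weakly convex with constant $r_i$, and $\sigma_i$ is chosen large enough, as in Theorem \ref{finite-stop}, so $\mu_i>0$). Since $\nabla\phi_i(\bx_i^{k+1})=\bg_i^{k+1}+\bpi_i^{k+1}$, strong convexity yields
\begin{equation*}
T_W^{(i)}\ge\langle\bg_i^{k+1}+\bpi_i^{k+1},\,\bx_i^k-\bx_i^{k+1}\rangle+\tfrac{\mu_i}{2}\|\bx_i^{k+1}-\bx_i^k\|^2,
\end{equation*}
and the inner product is split by Young's inequality, its error controlled by the invariant $\|\bg_i^{k+1}+\bpi_i^{k+1}\|^2\le\epsilon_i^{k+1}$.

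The crux is $T_\Pi$. A direct computation from \eqref{iceadmm-sub3} gives $T_\Pi=-\sum_{i\in\tauk}\sigma_i\|\bx_i^{k+1}-\bx^{\tk}\|^2=-\sum_{i\in\tauk}\frac{1}{\sigma_i}\|\bpi_i^{k+1}-\bpi_i^k\|^2$, a negative term I must dominate. Writing $\bpi_i^{k+1}-\bpi_i^k=-(\bg_i^{k+1}-\bg_i^k)+(\bg_i^{k+1}+\bpi_i^{k+1})-(\bg_i^{k}+\bpi_i^{k})$, applying the invariant to the last two brackets, and using the Lipschitz bound $\|\bg_i^{k+1}-\bg_i^k\|\le\alpha_i r_i\|\bx_i^{k+1}-\bx_i^k\|$, I obtain $\|\bpi_i^{k+1}-\bpi_i^k\|\le\alpha_i r_i\|\bx_i^{k+1}-\bx_i^k\|+\sqrt{\epsilon_i^{k+1}}+\sqrt{\epsilon_i^k}$. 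Squaring via a weighted Young's inequality turns $T_\Pi$ into a negative multiple of $\|\bx_i^{k+1}-\bx_i^k\|^2$ of order $\alpha_i^2r_i^2/\sigma_i$ plus a multiple of $\epsilon_i^{k+1}+\epsilon_i^k$. Finally I would add the correction term, using $\epsilon_i^{k+1}\le\nu_i\epsilon_i^k$ so that $\frac{29}{(1-\nu_i)\sigma_i}(\epsilon_i^k-\epsilon_i^{k+1})\ge\frac{29}{\sigma_i}\epsilon_i^k$, and then pick the free constants in the Young splits so that the net coefficient of $\|\bx_i^{k+1}-\bx_i^k\|^2$ is at least $\sigma_i/10$ while the net coefficient of $\epsilon_i^k$ is nonnegative.

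I expect the main obstacle to be precisely this final bookkeeping. One must simultaneously keep $\phi_i$ strongly convex---which forces $\sigma_i$ to dominate $\alpha_i r_i$---and make the several competing negative contributions to $\|\bx_i^{k+1}-\bx_i^k\|^2$ (from the Young splits of $T_W$ and $T_\Pi$) together with the $\epsilon$-terms close with the \emph{specific} constants $\sigma_i/10$ and $29$. Producing those exact constants, rather than merely some positive descent, is where the delicate tuning of the auxiliary Young's-inequality parameters is required, and it is the step I would carry out most carefully.
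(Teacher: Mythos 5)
Your proposal follows essentially the same route as the paper's proof: the same three-block decomposition of $\L^k-\L^{k+1}$ (the paper's $p_1^k,p_2^k,p_3^k$), the same inexactness invariant $\|\bg_i^{k}+\bpi_i^{k}\|^2\le\epsilon_i^{k}$ proved by induction using the initialization and local invariance (the paper's Lemma~\ref{basic-observations}~b)), the same domination of the dual-ascent term $\frac{1}{\sigma_i}\|\triangle\bpi_i^{k+1}\|^2$ via gradient Lipschitz continuity plus that invariant, and the same absorption of the $\epsilon$-terms into the telescoping correction through $\epsilon_i^{k+1}\le\nu_i\epsilon_i^{k}$. The only cosmetic differences (strong convexity of the local subproblem in place of the paper's descent-lemma bound for the $W$-block, and the exact-minimizer property in place of the explicit optimality computation \eqref{opt-con-xk1-1} for the $\bx$-block) produce identical coefficients, and the bookkeeping you defer does close with the stated constants $\sigma_i/10$ and $29$ once $\sigma_i\ge 3\alpha_i r_i$ is invoked, exactly as in the paper.
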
 
The above result enables us to show the convergence of three sequences $\{f (\bx^{\tau_k})\}$, $ \{F(W^{k})\} $, and $\{\L^{k}\}$. 
\begin{theorem}\label{global-obj-convergence-inexact} Suppose that Assumptions \ref{ass-fi} and \ref{ass-f} hold. Every client $i\in[m]$ chooses $\sigma_i\geq 3\alpha_{i}r_i$ and the sever selects  $\Omega^{\tau_k}$ as Scheme \ref{ass-omega}.  Then the following results hold.
 \begin{itemize}[leftmargin=17pt]
  \item[a)]  Sequence $\{(\bx^{\tau_k},W^{k},\Pi^{k})\}$  is bounded.
 \item[b)] 
  Three  sequences $\{\L^{k}\}$, $\{ F(W^{k})  \}$, and $\{f (\bx^{\tau_k})\}$ converge to the same value, namely,
   \begin{eqnarray}  \label{L-local-convergence-limit-inexact}
   \begin{array}{lll}
  \lim\limits_{k\rightarrow \infty} \L^{k} =  \lim\limits_{k\rightarrow \infty} F(W^{k})  = \lim\limits_{k\rightarrow \infty} f(\bx^{\tau_k}).
    \end{array}
 \end{eqnarray} 
 \item[c)] $\nabla F(W^{k})$ and $\nabla f(\bx^{\tau_k})$ eventually vanish, namely,  
    \begin{eqnarray}  \label{L-local-convergence-limit-grad-inexact}
   \begin{array}{lll}
 \lim\limits_{k \rightarrow \infty}\nabla F(W^{k})  =\lim\limits _{k \rightarrow\infty} \nabla f(\bx^{\tau_k}) =0.
    \end{array} 
 \end{eqnarray} 
 \end{itemize}
 \end{theorem}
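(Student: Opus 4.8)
The plan is to treat $\{\widetilde\L^k\}$ defined in \eqref{def-c} as a Lyapunov function and extract from its convergence both the boundedness in a) and the vanishing increments needed for b) and c). \emph{Part a).} Lemma~\ref{lemma-decreasing-1} already gives that $\{\widetilde\L^k\}$ is nonincreasing, so it suffices to bound it below. First I would combine \eqref{iceadmm-sub2}--\eqref{iceadmm-sub3} into the single residual bound $\|\bg_i^k+\bpi_i^k\|\le\sqrt{\epsilon_i^k}$, valid for every $i$ and $k$ (with equality at $k=0$ since $\bpi_i^0=-\bg_i^0$, and preserved under the local invariance). Expanding $\L^k=\sum_i[\alpha_i f_i(\bx_i^k)+\langle\bx_i^k-\bx^{\tau_k},\bpi_i^k\rangle+\tfrac{\sigma_i}{2}\|\bx_i^k-\bx^{\tau_k}\|^2]$ and using the descent inequality from Assumption~\ref{ass-fi} to lower bound $\alpha_i f_i(\bx_i^k)-\alpha_i\langle\nabla f_i(\bx_i^k),\bx_i^k-\bx^{\tau_k}\rangle$ by $\alpha_i f_i(\bx^{\tau_k})-\tfrac{\alpha_i r_i}{2}\|\bx_i^k-\bx^{\tau_k}\|^2$, then handling the remaining cross term with the residual bound and Young's inequality, the choice $\sigma_i\ge 3\alpha_i r_i$ keeps every quadratic coefficient nonnegative and yields $\widetilde\L^k\ge f(\bx^{\tau_k})\ge f^*$ via \eqref{FL-opt-lower-bound}. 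Hence $\{\widetilde\L^k\}$ converges; coercivity (Assumption~\ref{ass-f}) applied to $f(\bx^{\tau_k})\le\widetilde\L^0$ bounds $\{\bx^{\tau_k}\}$, the same computation bounds $\|\bx_i^k-\bx^{\tau_k}\|$ hence $\{W^k\}$, and the residual bound together with the at-most-linear growth of $\nabla f_i$ bounds $\{\Pi^k\}$.

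From convergence of $\{\widetilde\L^k\}$ and Lemma~\ref{lemma-decreasing-1}, the telescoped series $\sum_k\sum_i\tfrac{\sigma_i}{10}(\|\bx^{\tk}-\bx^{\tau_k}\|^2+\|\bx_i^{k+1}-\bx_i^k\|^2)$ is finite, so $\|\bx^{\tk}-\bx^{\tau_k}\|\to0$ and $\|\bx_i^{k+1}-\bx_i^k\|\to0$; moreover \eqref{iceadmm-sub20} forces $\epsilon_i^k\to0$ geometrically along each client's infinitely many selections guaranteed by Scheme~\ref{ass-omega}, so the penalty in \eqref{def-c} vanishes and $\L^k$ shares the limit of $\widetilde\L^k$. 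The crux of the argument, and the step I expect to be hardest, is upgrading these to the consensus statement $\|\bx_i^k-\bx^{\tau_k}\|\to0$ for \emph{every} client. For a client updated at step $k+1$, the dual recursion \eqref{iceadmm-sub3} gives $\|\bx_i^{k+1}-\bx^{\tk}\|=\sigma_i^{-1}\|\bpi_i^{k+1}-\bpi_i^k\|$, and writing $\bpi_i^{k+1}-\bpi_i^k=(\bpi_i^{k+1}+\bg_i^{k+1})-(\bpi_i^k+\bg_i^k)-(\bg_i^{k+1}-\bg_i^k)$ bounds this by $\sqrt{\epsilon_i^{k+1}}+\sqrt{\epsilon_i^k}+\alpha_i r_i\|\bx_i^{k+1}-\bx_i^k\|\to0$. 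To reach an arbitrary client at an arbitrary step, I would let $k_i$ be its last selection before $k$; Scheme~\ref{ass-omega} bounds $k-k_i$ by $s_0k_0$, and the triangle inequality $\|\bx_i^k-\bx^{\tau_k}\|\le\|\bx_i^{k_i}-\bx^{\tau_{k_i}}\|+\|\bx^{\tau_{k_i}}-\bx^{\tau_k}\|$ drives both pieces to zero, the first by the selected-client estimate and the second because $\bx^\tau$ moves across only a bounded number of rounds whose consecutive gaps already vanish.

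\emph{Part b).} With $\|\bx_i^k-\bx^{\tau_k}\|\to0$ and $\{\Pi^k\}$ bounded, the cross and quadratic terms in the expansion of $\L^k$ vanish, so $\L^k-F(W^k)\to0$; and $F(W^k)-f(\bx^{\tau_k})=\sum_i\alpha_i(f_i(\bx_i^k)-f_i(\bx^{\tau_k}))\to0$ by continuity of each $f_i$ on the bounded iterate set. Since $\L^k$ already converges, the three limits in \eqref{L-local-convergence-limit-inexact} coincide.

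\emph{Part c).} The averaging identity behind \eqref{iceadmm-sub1}, extended to all clients through the local-invariance bookkeeping, reads $\sigma\,\bx^{\tk}=\sum_i(\sigma_i\bx_i^k+\bpi_i^k)$, whence $\sum_i\bpi_i^k=\sum_i\sigma_i(\bx^{\tk}-\bx_i^k)\to0$ because $\|\bx^{\tk}-\bx_i^k\|\le\|\bx^{\tk}-\bx^{\tau_k}\|+\|\bx^{\tau_k}-\bx_i^k\|\to0$. Combining with the residual bound $\|\bg_i^k+\bpi_i^k\|\le\sqrt{\epsilon_i^k}\to0$ gives $\sum_i\bg_i^k=\sum_i(\bg_i^k+\bpi_i^k)-\sum_i\bpi_i^k\to0$; since $\bg_i^k=\alpha_i\nabla f_i(\bx_i^k)$ this is exactly $\nabla F(W^k)\to0$, and adding the correction $\sum_i\alpha_i(\nabla f_i(\bx^{\tau_k})-\nabla f_i(\bx_i^k))$, bounded by $\sum_i\alpha_i r_i\|\bx^{\tau_k}-\bx_i^k\|\to0$, converts it into $\nabla f(\bx^{\tau_k})=\sum_i\alpha_i\nabla f_i(\bx^{\tau_k})\to0$, establishing \eqref{L-local-convergence-limit-grad-inexact}.
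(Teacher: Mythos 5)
Your parts a) and b) follow essentially the same route as the paper: the Lyapunov sequence $\{\widetilde\L^k\}$ from \eqref{def-c}, the lower bound $\widetilde\L^k\geq f(\bx^{\tau_k})\geq f^*$ via the descent inequality plus the residual bound $\|\bg_i^k+\bpi_i^k\|\leq\sqrt{\epsilon_i^k}$, telescoping to kill $\|\bx^{\tk}-\bx^{\tau_k}\|$ and $\|\bx_i^{k+1}-\bx_i^k\|$, and the last-selection bookkeeping under Scheme \ref{ass-omega} to upgrade to consensus $\|\bx_i^k-\bx^{\tau_k}\|\to0$ for every client. These steps are sound; the only loose point is your claim that ``the same computation'' bounds $\|\bx_i^k-\bx^{\tau_k}\|$ and hence $\{W^k\}$ --- in the paper's version of that computation the quadratic coefficients cancel exactly, so you must rerun Young's inequality with a smaller parameter to retain a positive quadratic slack (this works, but it is not literally the same computation; alternatively, boundedness of $\{W^k\}$ follows from boundedness of $\{\bx^{\tau_k}\}$ together with the vanishing of $\|\bx_i^{k}-\bx^{\tau_k}\|$, which is the paper's route).

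Part c), however, has a genuine gap. The identity $\sigma\bx^{\tk}=\sum_{i=1}^m(\sigma_i\bx_i^k+\bpi_i^k)$ that you invoke is a consequence of \eqref{iceadmm-sub1} combined with \eqref{iceadmm-sub4}, and \eqref{iceadmm-sub1} executes only when $k\in\K$. For $k\notin\K$ the global parameter stays frozen at its value from the last averaging step while the selected clients keep updating $(\bx_i^k,\bpi_i^k)$, so $\sum_i(\sigma_i\bx_i^k+\bpi_i^k)$ drifts away from $\sigma\bx^{\tau_{k+1}}$ and the identity fails. Consequently your chain $\sum_i\bpi_i^k=\sum_i\sigma_i(\bx^{\tk}-\bx_i^k)\to0$, and with it $\nabla F(W^k)\to0$ and $\nabla f(\bx^{\tau_k})\to0$, is established only along the subsequence $k\in\K$, whereas \eqref{L-local-convergence-limit-grad-inexact} asserts convergence of the whole sequence. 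The paper closes exactly this gap: it sets $\ell:=(\tau_{k+1}-1)k_0\in\K$, applies \eqref{opt-con-xk1-1} at step $\ell$, and then proves $\|\bpi_i^{k+1}-\bpi_i^{\ell+1}\|\to0$ so that the conclusion transfers from $\K$ to all $k$. With the ingredients you already have, the repair is short --- between any $k$ and its most recent averaging step there are at most $k_0$ iterations, and each increment $\bpi_i^{t+1}-\bpi_i^{t}$ vanishes, so telescoping gives $\sum_i\bpi_i^k-\sum_i\bpi_i^{\ell}\to0$ --- but some such bridging step must be stated, and it is missing from your argument.
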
  
 Theorem \ref{global-obj-convergence-inexact} establishes the convergence property of the objective function values.  In the following theorem, we would like to see the convergence
performance of sequence $\{(\bx^{\tau_k},W^{k},\Pi^{k})\}$  itself, which requires more conditions. 
\begin{theorem}\label{global-convergence-inexact} Suppose that Assumptions \ref{ass-fi} and \ref{ass-f} hold. Every client $i\in[m]$ chooses $\sigma_i\geq 3\alpha_{i}r_i$ and the sever selects  $\Omega^{\tau_k}$ as Scheme \ref{ass-omega}.  Then the following results hold. 
 \begin{itemize}[leftmargin=17pt]
 \item[a)]  Any accumulating point $(\bx^{\infty},W^{\infty},\Pi^{\infty})$ of sequence $\{(\bx^{\tau_k},W^{k},\Pi^{k})\}$ is a stationary point of  problem (\ref{FL-opt-ver1}), where $\bx^{\infty}$ is a stationary point of problem (\ref{FL-opt}). 
 \item[b)] If further assuming that $\bx^{\infty}$ is isolated, then the whole sequence  converges to $(\bx^{\infty},W^{\infty},\Pi^{\infty})$. 
 \end{itemize}
 \end{theorem}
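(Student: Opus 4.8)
The plan is to verify, at an arbitrary accumulation point, the three stationarity conditions of \eqref{opt-con-FL-opt-ver1}, and then to promote subsequential convergence to convergence of the whole sequence under the isolation hypothesis. Fix a subsequence $\{k_j\}$ along which $(\bx^{\tau_{k_j}},W^{k_j},\Pi^{k_j})\to(\bx^{\infty},W^{\infty},\Pi^{\infty})$; such a subsequence exists because Theorem \ref{global-obj-convergence-inexact}a) guarantees boundedness. I will also use two consequences of Lemma \ref{lemma-decreasing-1}: since $\{\widetilde\L^k\}$ is nonincreasing and bounded below, its descent scale is summable, so $\|\bx^{\tk}-\bx^{\tau_k}\|\to0$ and $\|\bx_i^{k+1}-\bx_i^{k}\|\to0$.

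For part a), I would first show $\epsilon_i^{k}\to0$ for every $i$: Scheme \ref{ass-omega} forces each client to be selected at least once in every window of $s_0$ rounds, so by \eqref{iceadmm-sub20} the quantity $\epsilon_i^{k}$ is contracted by a factor at most $\nu_i\in[1/2,1)$ infinitely often while \eqref{iceadmm-sub5} leaves it unchanged in between. Combining the inexactness bound \eqref{iceadmm-sub2} with the dual step \eqref{iceadmm-sub3} gives $\|\alpha_{i}\nabla f_i(\bx_i^{k+1})+\bpi_i^{k+1}\|^2\le\epsilon_i^{k+1}$ at every update, and because \eqref{iceadmm-sub5} freezes $\bx_i,\bpi_i,\epsilon_i$ together, this residual bound propagates to all $k$ and all $i$; hence $\alpha_{i}\nabla f_i(\bx_i^{k})+\bpi_i^{k}\to0$. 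Passing to the limit along $\{k_j\}$ with the continuity of $\nabla f_i$ (Assumption \ref{ass-fi}) yields the first condition $\alpha_{i}\nabla f_i(\bx_i^{\infty})+\bpi_i^{\infty}=0$. For the second condition I would chain the established facts: the residual bounds at $k$ and $k+1$, the Lipschitz estimate $\|\nabla f_i(\bx_i^{k+1})-\nabla f_i(\bx_i^{k})\|\le r_i\|\bx_i^{k+1}-\bx_i^{k}\|$, and $\|\bx_i^{k+1}-\bx_i^{k}\|\to0$ give $\|\bpi_i^{k+1}-\bpi_i^{k}\|\to0$, whence \eqref{iceadmm-sub3} forces $\|\bx_i^{k+1}-\bx^{\tk}\|\to0$ and then $\|\bx_i^{k}-\bx^{\tau_k}\|\to0$ by the triangle inequality, i.e.\ $\bx_i^{\infty}=\bx^{\infty}$. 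The third condition needs no new estimate: summing the first condition over $i$ gives $\sum_i\bpi_i^{\infty}=-\sum_i\alpha_{i}\nabla f_i(\bx^{\infty})=-\nabla f(\bx^{\infty})$, and \eqref{L-local-convergence-limit-grad-inexact} with continuity forces $\nabla f(\bx^{\infty})=0$, so $\sum_i\bpi_i^{\infty}=0$. This establishes \eqref{opt-con-FL-opt-ver1}, and \eqref{grad-x-*=0} then certifies that $\bx^{\infty}$ is stationary for \eqref{FL-opt}.

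For part b), I would invoke the classical principle that a bounded sequence whose successive differences vanish and which possesses an isolated accumulation point must converge. The vanishing of all successive differences is already in hand from part a): $\|\bx_i^{k+1}-\bx_i^{k}\|\to0$ and $\|\bx^{\tau_{k+1}}-\bx^{\tau_k}\|=\|\bx^{\tk}-\bx^{\tau_k}\|\to0$ from Lemma \ref{lemma-decreasing-1}, and $\|\bpi_i^{k+1}-\bpi_i^{k}\|\to0$ from the chain above. Boundedness (Theorem \ref{global-obj-convergence-inexact}a)) together with vanishing steps makes the set of accumulation points compact and connected; since $\bx^{\infty}$ is isolated and conditions one and two of part a) determine $W^{\infty}$ and $\Pi^{\infty}$ uniquely from $\bx^{\infty}$, the triple $(\bx^{\infty},W^{\infty},\Pi^{\infty})$ is isolated in the product space. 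A connected set with an isolated point is a singleton, so the entire sequence converges to it.

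The step I expect to be the main obstacle is the bookkeeping in part a) under partial participation: one must use Scheme \ref{ass-omega} and the invariance \eqref{iceadmm-sub5} in tandem to guarantee simultaneously that $\epsilon_i^{k}\to0$ and that the per-update optimality residual from \eqref{iceadmm-sub2}--\eqref{iceadmm-sub3} stays valid for the frozen, non-selected indices, since otherwise the limiting identities could fail for clients that are selected only sporadically. By contrast, the topological argument in part b) is routine once the successive differences are shown to vanish, the only subtlety being to check that isolation of the global component $\bx^{\infty}$ lifts to isolation of the full triple.
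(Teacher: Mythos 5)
Your overall route for part a) is essentially the paper's: propagate the inexactness residual $\|\bg_i^{k+1}+\bpi_i^{k+1}\|^2\le\epsilon_i^{k+1}$ to all $k$ and all $i$ via the invariance \eqref{iceadmm-sub5}, show $\epsilon_i^k\to0$ under Scheme \ref{ass-omega}, and pass to the limit along the convergent subsequence; part b) is the same Ostrowski-type principle the paper invokes via \cite[Lemma 4.10]{more1983computing}, and your packaging (lifting isolation of $\bx^\infty$ to the triple and using connectedness of the accumulation set) is a valid variant of the paper's component-by-component application. Your derivation of $\sum_i\bpi_i^\infty=0$ from $\nabla f(\bx^\infty)=0$ and \eqref{L-local-convergence-limit-grad-inexact} is also a legitimate shortcut compared with the paper's use of the averaging identity \eqref{opt-con-xk1-1}.

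However, there is a genuine gap in your proof of the consensus condition $\bx_i^\infty=\bx^\infty$. The step ``$\|\bpi_i^{k+1}-\bpi_i^{k}\|\to0$, whence \eqref{iceadmm-sub3} forces $\|\bx_i^{k+1}-\bx^{\tk}\|\to0$'' is valid only for clients $i\in\Omega^{\tau_{k+1}}$, because \eqref{iceadmm-sub3} is executed only by selected clients. For $i\notin\Omega^{\tau_{k+1}}$ the update is \eqref{iceadmm-sub5}, so $\bpi_i^{k+1}-\bpi_i^{k}=0$ holds trivially and carries no information about $\bx_i^{k+1}-\bx^{\tk}$: the frozen local parameter $\bx_i^{k+1}=\bx_i^{k_i+1}$ (with $k_i$ the last selection time) can a priori drift arbitrarily far from the moving global parameter $\bx^{\tk}$. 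Closing this requires exactly the argument in the paper's Lemma \ref{L-bounded-decreasing-1} c), estimate \eqref{gap-xi-xtau}: decompose $\bx_i^{k+1}-\bx^{\tk}=(\bx_i^{k_i+1}-\bx^{\tau_{k_i+1}})+(\bx^{\tau_{k_i+1}}-\bx^{\tk})$, where the first term equals $\frac{1}{\sigma_i}\triangle\bpi_i^{k_i+1}\to0$ and the second is a sum of at most $s_0$ consecutive global increments, each vanishing; the bound $\tau_{k+1}-\tau_{k_i+1}\le s_0$ is precisely where Scheme \ref{ass-omega}, through \eqref{scheme-omega-2T}, enters a second time. You correctly flag partial participation as ``the main obstacle,'' but your bookkeeping only covers the residual propagation and $\epsilon_i^k\to0$, not this consensus step. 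Note also that the gap propagates: your third stationarity condition is derived from the second one (you need $\bpi_i^\infty=-\alpha_i\nabla f_i(\bx_i^\infty)=-\alpha_i\nabla f_i(\bx^\infty)$), so without the above fix neither the second nor the third condition of \eqref{opt-con-FL-opt-ver1} is established.
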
  
We point out that the establishments of  Theorems \ref{global-obj-convergence-inexact} and \ref{global-convergence-inexact} do not rely on the choices of $\Omega^{\tau_k}$ explicitly due to Scheme \ref{ass-omega}. If the sever generates $\Omega^{\tau_k}$ randomly rather than using Scheme \ref{ass-omega}, then the above two theorems are valid with a high probability. In addition, since no convexity of $f_i$ or $f$ is imposed,   the sequence is guaranteed to converge to the stationary point of problems (\ref{FL-opt-ver1}) and (\ref{FL-opt}). If we have the convexity of $f$, then the sequence converges to the optimal solution to (\ref{FL-opt-ver1}) and (\ref{FL-opt}),   stated by the following corollary.
 
\begin{corollary}\label{L-global-convergence}  Suppose that Assumptions \ref{ass-fi} and \ref{ass-f} hold and $f$ is convex. Every client $i\in[m]$ chooses $\sigma_i\geq 3\alpha_{i}r_i$ and the sever selects  $\Omega^{\tau_k}$ as Scheme \ref{ass-omega}.  Then the following results hold. 
\begin{itemize}[leftmargin=12pt] 
\item[a)] Three  sequences converge to the optimal function value of problem (\ref{FL-opt}), namely,
 \begin{eqnarray}  \label{L-global-convergence-limit}
   \begin{array}{lll}
\lim\limits_{k\rightarrow \infty} \L^{k} =  \lim\limits_{k\rightarrow \infty} F(W^{k})  = \lim\limits_{k\rightarrow \infty} f(\bx^{\tau_k})=  f^*.
    \end{array} 
 \end{eqnarray}   
  \item[b)] Any accumulating point  $(\bx^{\infty},W^{\infty},\Pi^{\infty})$ of   sequence  $\{(\bx^{\tau_k},W^{k},\Pi^{k})\}$ is an optimal solution to problem (\ref{FL-opt-ver1}), where $\bx^{\infty}$ is an optimal solution to  problem  (\ref{FL-opt}). 
 
\item [c)]  If  $f$ is strongly convex, then whole sequence  converges the unique optimal solution $(\bx^*,W^*,\Pi^*)$ to   problem (\ref{FL-opt-ver1}), where $\bx^*$ is the unique optimal solution to   problem (\ref{FL-opt}).  
\end{itemize} 
\end{corollary}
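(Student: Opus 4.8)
The plan is to read off all three parts as consequences of Theorems~\ref{global-obj-convergence-inexact} and \ref{global-convergence-inexact}, invoking convexity only to pin down the common limit value and to upgrade ``stationary'' to ``optimal''. The one point that needs care is that convexity is assumed for $f$ but \emph{not} for the individual $f_i$, so I will route every optimality claim through the aggregated problem \eqref{FL-opt} and then pull it back to \eqref{FL-opt-ver1} via their equivalence together with the consensus constraint $\bx_i=\bx$, rather than through any per-client characterization.

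For part a), Theorem~\ref{global-obj-convergence-inexact}b already guarantees that $\{\L^k\}$, $\{F(W^k)\}$, and $\{f(\bx^{\tau_k})\}$ all converge to one common value $L$, so it suffices to show $L=f^*$. By the boundedness in Theorem~\ref{global-obj-convergence-inexact}a, the sequence $\{\bx^{\tau_k}\}$ admits a convergent subsequence $\bx^{\tau_{k_j}}\to\bx^\infty$; by Theorem~\ref{global-convergence-inexact}a this $\bx^\infty$ is a stationary point of \eqref{FL-opt}, i.e. $\nabla f(\bx^\infty)=0$ by \eqref{grad-x-*=0}. Since $f$ is convex, $\nabla f(\bx^\infty)=0$ forces $\bx^\infty$ to be a global minimizer, whence $f(\bx^\infty)=f^*$. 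Continuity of $f$ (each $f_i$ is $C^1$ under Assumption~\ref{ass-fi}, so $f$ is continuous) gives $f(\bx^{\tau_{k_j}})\to f(\bx^\infty)=f^*$, and because the whole sequence $\{f(\bx^{\tau_k})\}$ converges to $L$, every subsequence does too; hence $L=f^*$, which is exactly \eqref{L-global-convergence-limit}.

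For part b), let $(\bx^\infty,W^\infty,\Pi^\infty)$ be any accumulation point. Theorem~\ref{global-convergence-inexact}a says it is a stationary point of \eqref{FL-opt-ver1}, so by \eqref{opt-con-FL-opt-ver1} we have $\bx_i^\infty=\bx^\infty$ for every $i$ and $\nabla f(\bx^\infty)=0$. Convexity of $f$ again makes $\bx^\infty$ a minimizer of \eqref{FL-opt}, and since \eqref{FL-opt-ver1} and \eqref{FL-opt} are equivalent, their common optimal value is $f^*$. The point $W^\infty=(\bx^\infty,\dots,\bx^\infty)$ is feasible for \eqref{FL-opt-ver1} and attains $F(W^\infty)=\sum_{i=1}^m\alpha_i f_i(\bx^\infty)=f(\bx^\infty)=f^*$; hence $(\bx^\infty,W^\infty)$ is optimal for \eqref{FL-opt-ver1} and $\bx^\infty$ is optimal for \eqref{FL-opt}.

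For part c), strong convexity of $f$ yields a unique minimizer $\bx^*$, and since $\nabla f(\bx)=0$ holds only at $\bx^*$, this minimizer is the unique, hence isolated, stationary point of \eqref{FL-opt}. Parts a) and b) show every accumulation point of $\{\bx^{\tau_k}\}$ is such a minimizer, so $\bx^\infty=\bx^*$ is forced and isolated; Theorem~\ref{global-convergence-inexact}b then delivers convergence of the whole sequence $\{(\bx^{\tau_k},W^k,\Pi^k)\}$. The stationarity conditions \eqref{opt-con-FL-opt-ver1} determine the limit uniquely from $\bx^*$, namely $W^*=(\bx^*,\dots,\bx^*)$ and $\bpi_i^*=-\alpha_i\nabla f_i(\bx^*)$, so the sequence converges to the unique $(\bx^*,W^*,\Pi^*)$. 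The only genuine subtlety throughout is the ``$f$ convex but $f_i$ possibly nonconvex'' gap flagged at the outset, which is why optimality is always established for \eqref{FL-opt} first and transferred back; everything else is bookkeeping layered on top of the two cited theorems.
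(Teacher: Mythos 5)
Your proposal is correct, but it takes a genuinely different route from the paper's in two of the three parts. For a), the paper never passes to subsequences: it writes the convexity sandwich $f(\bx^{\tau_k}) \geq f^* \geq f(\bx^{\tau_k}) + \langle \nabla f(\bx^{\tau_k}), \bx^{*}-\bx^{\tau_k} \rangle$, then uses $\nabla f(\bx^{\tau_k})\to 0$ (Theorem \ref{global-obj-convergence-inexact}c) together with boundedness of $\{\bx^{\tau_k}\}$ to kill the inner product and squeeze $f(\bx^{\tau_k})\to f^*$ directly; you instead extract an accumulation point, invoke Theorem \ref{global-convergence-inexact}a to get stationarity, and use continuity of $f$ to identify the common limit. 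Both work (your version tacitly needs to refine the subsequence so that the full triple $(\bx^{\tau_{k_j}},W^{k_j},\Pi^{k_j})$ converges before Theorem \ref{global-convergence-inexact}a applies, but boundedness makes that routine — or you could skip the detour entirely by evaluating $\nabla f$ along the subsequence via Theorem \ref{global-obj-convergence-inexact}c). For c), the paper argues quantitatively: strong convexity plus $\nabla f(\bx^*)=0$ gives $\frac{\nu}{2}\|\bx^{\tau_k}-\bx^*\|^2 \leq f(\bx^{\tau_k})-f^*\to 0$, hence $\bx^{\tau_k}\to\bx^*$ outright, then $\bx_i^k\to\bx^*$ from \eqref{limit-5-term-0-inexact} and $\bpi_i^k\to\bpi_i^*$ from an explicit estimate using \eqref{opt-con-xk1-2} and \eqref{Lip-r}; you argue qualitatively, deducing that the unique stationary point is isolated and then invoking Theorem \ref{global-convergence-inexact}b. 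The paper's route is self-contained and gives an explicit error bound; yours is more modular and reuses the theorems as black boxes. Finally, your part b) is actually \emph{more} careful than the paper's: the paper dismisses it in one line ("stationary points are equivalent to optimal solutions if $f$ is convex"), which glosses over the fact that only $f$, not each $f_i$, is assumed convex, so optimality for \eqref{FL-opt-ver1} is not immediate from per-client convexity; your transfer argument — $\bx^\infty$ is optimal for \eqref{FL-opt}, the consensus point $W^\infty$ is feasible for \eqref{FL-opt-ver1} and attains the common optimal value $f^*$ — is exactly the missing detail.
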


  \begin{remark}Regarding assumptions in  Corollary \ref{L-global-convergence},  $f$ being strongly convex does not require that every $f_i,i\in[m]$ is strongly convex. If one of $f_i$s is strongly convex and the remaining is convex, then $f=\sum_{i=1}^{m} \alpha_{i}f_i$ is strongly convex.  Moreover, the strongly convexity suffices to the coerciveness of $f$. Therefore, under the strongly convexity, Assumption \ref{ass-f} can be exempted.
  \end{remark}

 \subsection{Convergence rate}
{We have shown that   Algorithm \ref{algorithm-ICEADMM} converges. Now, we would like to see how fast this convergence is, stated as follows. 
 \begin{theorem} \label{complexity-thorem-gradient-inexact} Suppose that Assumptions \ref{ass-fi} holds. Every client chooses $\sigma_i\geq 3\alpha_{i}r_i,i\in[m]$ and the sever selects   $\Omega^{\tau_k}$ as Scheme \ref{ass-omega}.  Then for any $k>s_0k_0$, it has 
\begin{eqnarray}\label{error-minimal-grad-s} 
   \arraycolsep=1.0pt\def\arraystretch{1.5}
\begin{array}{rcll}
\min_{s=1,2,\cdots,k}\|\nabla f(\bx^{\tau_{s +1}})\|^2  \leq    \frac{c k_0}{k-s_0k_0}, 
\end{array} \end{eqnarray}
  where $c:=\frac{940s_0(s_0+1)m\max_{i\in[m]}\sigma_i(\widetilde\L^{0} - f^*) }{9}   +\sum_{i=1}^m \frac{169ms_0\epsilon_i}{1-\nu_i}$.
 \end{theorem}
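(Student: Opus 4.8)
The plan is to turn the one-step descent of Lemma~\ref{lemma-decreasing-1} into a rate on $\min_s\|\nabla f(\bx^{\tau_{s+1}})\|^2$ by telescoping, after first pinning down a lower bound for $\widetilde\L^k$. I would begin by showing $\widetilde\L^k\ge f^*$ using Assumption~\ref{ass-fi} alone (this is why coerciveness, Assumption~\ref{ass-f}, is not needed here): from \eqref{iceadmm-sub2}--\eqref{iceadmm-sub3} one has $\|\alpha_i\nabla f_i(\bx_i^k)+\bpi_i^k\|^2\le\epsilon_i^k$, and feeding this together with the $r_i$-smoothness inequality for each $f_i$ at the pair $(\bx_i^k,\bx^{\tau_k})$ into $L(\bx^{\tau_k},\bx_i^k,\bpi_i^k)$, the choice $\sigma_i\ge 3\alpha_i r_i$ leaves a nonnegative quadratic after a Young split of the cross term, giving $\L^k\ge f(\bx^{\tau_k})-\sum_i\tfrac{3\epsilon_i^k}{2\sigma_i}\ge f^*-\sum_i\tfrac{3\epsilon_i^k}{2\sigma_i}$ and hence $\widetilde\L^k\ge f^*$. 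Summing Lemma~\ref{lemma-decreasing-1} over $k=0,\dots,K$ then collapses the left-hand side to $\widetilde\L^0-\widetilde\L^{K+1}\le\widetilde\L^0-f^*$, so $\sum_{k}\sum_i\sigma_i(\|\bx^{\tk}-\bx^{\tau_k}\|^2+\|\bx_i^{k+1}-\bx_i^k\|^2)\le 10(\widetilde\L^0-f^*)$ for every horizon.

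The core step is to dominate the gradient at an averaged point by these displacements. Evaluating at a communication step ($s+1\in\K$, which suffices since $\bx^{\tau_{s+1}}$ is constant across each block, so the overall minimum is at most the average over such block-boundary indices), I would write $\nabla f(\bx^{\tau_{s+1}})=\sum_i\alpha_i\nabla f_i(\bx^{\tau_{s+1}})$ and split each term as $\alpha_i[\nabla f_i(\bx^{\tau_{s+1}})-\nabla f_i(\bx_i^{s+1})]+[\alpha_i\nabla f_i(\bx_i^{s+1})+\bpi_i^{s+1}]-\bpi_i^{s+1}$. Assumption~\ref{ass-fi} bounds the first bracket by $\alpha_i r_i\|\bx^{\tau_{s+1}}-\bx_i^{s+1}\|$; the inexactness \eqref{iceadmm-sub2}--\eqref{iceadmm-sub3} bounds the second by $\sqrt{\epsilon_i^{s+1}}$ for \emph{all} $i$, since inactive clients keep the triple $(\epsilon_i,\bx_i,\bpi_i)$ frozen via \eqref{iceadmm-sub5} from the last round they were selected, where it satisfied \eqref{iceadmm-sub2}; and the total-dual term is eliminated by the averaging identity \eqref{iceadmm-sub1}, which after $\bz_i=\sigma_i\bx_i+\bpi_i$ gives $\sum_i\bpi_i^{s+1}=\sum_i\sigma_i(\bx^{\tau_{s+2}}-\bx_i^{s+1})$, again a weighted consensus error. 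Thus $\|\nabla f(\bx^{\tau_{s+1}})\|$ is controlled by $\sum_i(\alpha_i r_i+\sigma_i)\|\bx^{\tau_{s+1}}-\bx_i^{s+1}\|+\sigma\|\bx^{\tau_{s+2}}-\bx^{\tau_{s+1}}\|+\sum_i\sqrt{\epsilon_i^{s+1}}$, reducing everything to the consensus/staleness error $\|\bx^{\tau_{s+1}}-\bx_i^{s+1}\|$.

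The decisive and hardest estimate is this consensus error, where Scheme~\ref{ass-omega} enters. For $i\notin\Omega^{\tau_{s+1}}$ the iterate $\bx_i^{s+1}$ is frozen at its value from the last round in which $i$ was selected, which by \eqref{scheme-omega-2T} is at most $s_0$ blocks earlier; inserting the intervening block-averages and telescoping, $\|\bx^{\tau_{s+1}}-\bx_i^{s+1}\|$ is bounded by a sum of at most $O(s_0)$ consecutive average-increments $\|\bx^{\tau_{t+1}}-\bx^{\tau_t}\|$ and within-block client increments $\|\bx_i^{t+1}-\bx_i^t\|$. Cauchy--Schwarz, applied both across the $m$ clients (producing the factor $m$) and across the $O(s_0)$ window terms, then squares this bound; summing the squared gradient estimate over the block-boundary indices $s$ with $s_0k_0<s\le k$ makes each displacement reappear in at most $O(s_0)$ windows, and combining these two $O(s_0)$ factors with the telescoped budget $10(\widetilde\L^0-f^*)$ of the first paragraph yields the $s_0(s_0+1)m\max_{i}\sigma_i(\widetilde\L^0-f^*)$ part of $c$, while the squared $\sqrt{\epsilon_i^{s+1}}$ terms summed geometrically through \eqref{iceadmm-sub20} (since $\epsilon_i^{k+1}\le\nu_i\epsilon_i^k$ gives $\sum_s\epsilon_i^{s+1}\le\epsilon_i/(1-\nu_i)$) produce the $\sum_i\tfrac{m s_0\epsilon_i}{1-\nu_i}$ part. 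Finally, bounding the minimum over $s$ by the average of the $(k-s_0k_0)/k_0$ such terms delivers $\min_{s\le k}\|\nabla f(\bx^{\tau_{s+1}})\|^2\le ck_0/(k-s_0k_0)$. I expect the genuine difficulty to lie in the index bookkeeping of the telescoping window under partial participation and in tracking the numerical constants carefully enough that they collapse exactly into the stated $c$.
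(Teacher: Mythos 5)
Your proposal follows essentially the same route as the paper's proof: establishing $\widetilde\L^k\ge f^*$ under Assumption \ref{ass-fi} alone, telescoping the descent of Lemma \ref{lemma-decreasing-1} into a budget, decomposing $\nabla f(\bx^{\tau_{s+1}})$ at communication steps via gradient Lipschitz continuity, the inexactness bound \eqref{opt-con-xk1-2}, and the averaging identity \eqref{opt-con-xk1-1}, controlling stale clients through the $s_0$-window guaranteed by Scheme \ref{ass-omega} with telescoped global-average increments, and finally bounding the minimum by the average over block boundaries --- exactly the paper's skeleton, up to bookkeeping (the paper applies the averaging identity at step $k$ rather than $s+1$, producing the middle term $\sum_{i}\bg_i^{k+1}-\sum_{i}\bg_i^{k}$ in place of your $\sigma\|\bx^{\tau_{s+2}}-\bx^{\tau_{s+1}}\|$). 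One caution: your parenthetical ``$\epsilon_i^{k+1}\le\nu_i\epsilon_i^k$ gives $\sum_s\epsilon_i^{s+1}\le\epsilon_i/(1-\nu_i)$'' is not literally valid under partial participation, since \eqref{iceadmm-sub20} contracts $\epsilon_i$ only when client $i$ is selected (it is frozen by \eqref{iceadmm-sub5} otherwise), so the geometric summation must be grouped per $s_0$ blocks as in the paper's proof, which is precisely the source of the extra $s_0$ that your (correct) final constant $\sum_{i}\frac{169ms_0\epsilon_i}{1-\nu_i}$ already displays.
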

According to the above theorem, the minimal value among  $\{\|\nabla f (\bx^{\tau_{s +1}})\|^2,s{\in}[k]\}$ vanishes with a  rate $O(1/k)$, a sub-linear rate. We emphasize that the establishment of such a convergence rate requires nothing but the assumption of gradient Lipschitz continuity, namely, Assumption \ref{ass-fi}. Similar results can be found in many literature. For example, in \cite{karimireddy2020scaffold} the convergence rate is about $O(\sqrt{1/k})$ while the rate in \cite{ zhang2020fedpd,zhou2022} is about $O(1/k)$ but has been obtained under the full device participation (corresponding to the case of $s_0=1$ in Scheme \ref{ass-omega}).
\begin{remark}\label{remark-com} Theorem \ref{complexity-thorem-gradient-inexact} suggests that Algorithm \ref{algorithm-ICEADMM} should be terminated if the following condition is satisfied,  
       \begin{eqnarray}\label{stopping-tol}
  \arraycolsep=0pt\def\arraystretch{1.5}
  \begin{array}{lllll}
\| \nabla f (\bx^{\tau_{k+1}})\|^2 \leq   \varepsilon,
   \end{array}
  \end{eqnarray} 
  where $\varepsilon$ is a given tolerance. Based on (\ref{error-minimal-grad-s}),  after 
         \begin{eqnarray}\label{stopping-iter}
  \arraycolsep=0pt\def\arraystretch{1.5}
  \begin{array}{lllll}
k =   \left\lceil \frac{(c+\varepsilon s_0)  k_0 }{\varepsilon} \right\rceil
   \end{array}
  \end{eqnarray} 
  iterations,  Algorithm \ref{algorithm-ICEADMM} meets  (\ref{stopping-tol}) and the total CR is
         \begin{eqnarray}\label{stopping-iter}
  \arraycolsep=0pt\def\arraystretch{1.5}
  \begin{array}{lllll}
CR:=\left\lceil \frac{2k}{k_0} \right\rceil  = \left\lceil \frac{2(c+\varepsilon s_0) }{\varepsilon} \right\rceil.
   \end{array}
  \end{eqnarray} 
  The above relation implies that the larger $s_0$ the more CR required by Algorithm \ref{algorithm-ICEADMM} to converge, which is reasonable. In fact,  one can observe that Scheme \ref{ass-omega} can be satisfied with a larger $s_0$. This is because,  a larger $s_0$ allows us to choose fewer clients to form $\Omega^\tau$, namely, fewer clients participating in the training at every step, which apparently results in slower convergence. As a consequence, the algorithm needs higher CR, thereby wasting communication resources. Hence, in order to reduce CR, it is essential to set an appropriately small $s_0$. However, to meet Scheme \ref{ass-omega}, a small $s_0$ means that more clients take part in the training, which will incur higher computational complexity.
 \end{remark}}
 

\section{Numerical Experiments}\label{sec:num}
In this section, we conduct some numerical experiments to demonstrate the performance of \ADMM\ (available at \url{https://github.com/ShenglongZhou/FedADMM}). All numerical experiments are implemented through MATLAB (R2019a) on a laptop with 32GB memory and 2.3Ghz CPU.  

 \subsection{Testing example}
 \begin{example}[Linear regression with non-i.i.d. data]\label{ex-linear} For this problem, local clients have their objective functions as
 \begin{eqnarray*} 
 \arraycolsep=1.0pt\def\arraystretch{1.5}
\begin{array}{llll}
f_i(\bx)&=& \sum_{t=1}^{d_i} \frac{1}{2d_i}(\langle \ba_i^t,\bx\rangle-b_i^t)^2,
\end{array} 
\end{eqnarray*}
where $\ba_i^t\in\R^{n}$ and $b_i^t\in\R$ are the $t$-th sample for client $i$. We first pick $m$ integers $d_1,\cdots,d_m$ randomly from  $[50,150]$ and denote $d:=d_1+\cdots+d_m$. Then we generate $\lceil d/3 \rceil$ samples $(\ba, b)$  from the standard normal distribution, $\lceil d/3 \rceil$ samples from the Student's $t$ distribution with degree $5$, and $d-2\lceil d/3 \rceil$ samples from the uniform distribution in $[-5,5]$. Now we shuffle all samples and divide them into $m$ parts with sizes $d_1\cdots,d_m$ for $m$ clients. In the regard, each client has non-i.i.d. data.   
\end{example}
\begin{example}[Logistic regression]\label{ex-logist} For this problem,  local clients have their objective functions as 
\begin{eqnarray*}  
 \arraycolsep=0pt\def\arraystretch{1.5}
~~\begin{array}{llll}
f_i(\bx)= 
\frac{1}{d_i}\sum_{t=1}^{d_i}(\ln (1+{\rm e}^{\langle\ba_i^t,\bx\rangle} )-b_i^t\langle\ba_i^t,\bx\rangle)+\frac{\lambda}{2}\|\bx\|^2,
\end{array} 
\end{eqnarray*}
where $\ba_i^t\in\R^{n}, b^t_i\in\{0,1\}$ , and $\lambda>0$ is a penalty parameter (e.g., $\lambda=0.001$ in our  numerical experiments).  We use two real datasets described in Table \ref{tab:datasets} to generate $(\ba, b)$. Again, we randomly split $d$ samples into $m$ groups for $m$ clients.
\end{example}
\begin{table}[!th]
	\renewcommand{\arraystretch}{1.25}\addtolength{\tabcolsep}{-2pt}
	\caption{Descriptions of  two real datasets.}\vspace{-3mm}
	\label{tab:datasets}
	\begin{center}
		\begin{tabular}{lllrrrr }
			\hline
Data&Datasets&	Source	&	$n$	&	$d$\\\hline
\texttt{qot}&	Qsar oral toxicity	&	UCI &	1024 	&	8992 	\\
 { \texttt{rls}}&	{real-sim}	 &{Libsvm}&	{20958}&	{72309}\\

\hline
 		\end{tabular}
	\end{center}
	\vspace{-5mm}
\end{table}

\subsection{Implementations}
 We fix $\alpha_{i}=1/m, i\in[m]$ in model \eqref{FL-opt} and initialize $\bx_i^0=\bpi_i^0=0$. Parameters are set as follows:  for each $i\in[m]$, let $\epsilon_i^0= k_0^2$, and $\nu_{i}=0.95$, and $\sigma_i=0.2r_i/m$, where $r_i$ is the gradient Lipschitz continuous constant for $f_i$. We terminate  our algorithm if  the following condition  is satisfied,
 \begin{eqnarray} \label{stopping}
 \arraycolsep=1.0pt\def\arraystretch{1.5}
\begin{array}{rrr} 
 \|\nabla f(\bx^{\tau_{k}})\|^2
& <& \min\left\{\frac{1}{5}\|\nabla f(0)\|^2,\frac{5\varepsilon n}{md}\right\}, 
\end{array} 
\end{eqnarray}
where $\varepsilon=10^{-3}$ for Example \ref{ex-linear} and $\varepsilon=10^{-7}$ for Example \ref{ex-logist}.  In the subsequent numerical experiments,  instead of using Scheme \ref{ass-omega}, we generate $\Omega^\tau$ randomly since it is  easier than Scheme \ref{ass-omega}. As mentioned in Remark \ref{rem:omega},   Scheme \ref{ass-omega} can be guaranteed with a high probability in this way. Specifically, let $\Omega^1, \Omega^2,\cdots$ be selected independently with $|\Omega^\tau|=\rho m$ for any $\tau\geq1$, where $\rho\in(0,1]$. Indices in each $\Omega^\tau$ are uniformly sampled from $[m]$ without replacement.


\subsection{Benchmark algorithms}
{We will compare \ADMM\ with \FA\ \cite{mcmahan2017communication} described in Algorithm \ref{algorithm-FA},  \FP\ \cite{li2020federatedprox}, and {\tt FedAlt}  and {\tt FedSim}  \cite{pillutla2022federated}. For \FP,  every selected client $i\in\Omega^{\tau_{k+1}}$ needs to approximately solve a sub-problem  at each iteration. We adopt the gradient descent method to  tackle it using an initial point  $\bx^{\tk}$ if $ k\in\K$ and $\bx_i^k$ if $k\notin\K$.   For {\tt FedAlt}  and {\tt FedSim}, we also employ the strategy that the global averaging only occurs at $k\in\K$ and exploit a partial model personalization $h_i$ from \cite{hanzely2021personalized}, i.e., 
    \begin{eqnarray*}
\arraycolsep=0pt\def\arraystretch{1.5}
\begin{array}{lcll}
h_i(\bx,\bv)=(1-\alpha) f_i(\bx) + \alpha f_i(\bv)+\frac{\mu}{2}\|\bx-\bv\|^2,
\end{array}
\end{eqnarray*}
where $\alpha=0.5$ and $\mu=0.001$ are used in the numerical experiments. 
To ensure fair comparison, we initialize all algorithms with $\bx^0=\bx_i^0=0,i\in[m]$. In addition, we first implement \ADMM\ to solve the problem and terminate it if its solution  $\bw^{\tau_k}$ satisfies condition  \eqref{stopping}.  Then we  employ the other four algorithms to solve the problem if its solution $\bw$ meets the following condition,
    \begin{eqnarray*}
\arraycolsep=0pt\def\arraystretch{1.5}
\begin{array}{lcll}
f(\bw)-f(\bw^{\tau_k})\leq 2(1+|f(\bw^{\tau_k})|)10^{-4}.
\end{array}
\end{eqnarray*}
This condition allows all algorithms to stop with producing similar objective function values.
   }

\subsection{Numerical comparisons}
We compare five algorithms by reporting the following factors:  objective function values $f(\bx^{\tau_k})$, CR, and computational time (in seconds). {It is noted that there are four influential parameters $(n,m,\rho,k_0)$, where $n$ is the dimension of the solution, $m$ is the number of clients, $k_0$ has the impact on the CR, and $\rho\in(0,1]$ is the participation rate (i.e., the bigger $\rho$ the more clients to be chosen for the training at every iteration). To see the effect of one parameter, we will fix the others in the sequel.}

\subsubsection{Effect of $k_0$} 
{To see this, we fix  $(n,m,\rho){=}(100, 100,0.5)$. Here,  $\rho=0.5$ means half clients chosen for the training (i.e., $|\Omega^\tau|=0.5m$). First, we perform five algorithms to solve  Example \ref{ex-linear}  under $k_0\in\{1,10,30,50\}$ and report the results in Figure  \ref{fig:iterations-grad}.  One can observe that with the increasing of CR, all algorithms eventually achieve the same objective function value (i.e.,  the optimal one). Because of this, we will not report the objective function values in the subsequent numerical comparison. When $k_0{=}1$, the objective function values obtained by \ADMM\ decline slowly at the first several steps but approach the optimal value quickly afterwards. When $k_0{>}1$, it always outperforms the others as it uses the lowest CR.}
\begin{figure}[!th]
	\begin{subfigure}{.245\textwidth}
	\centering
	\includegraphics[width=.995\linewidth]{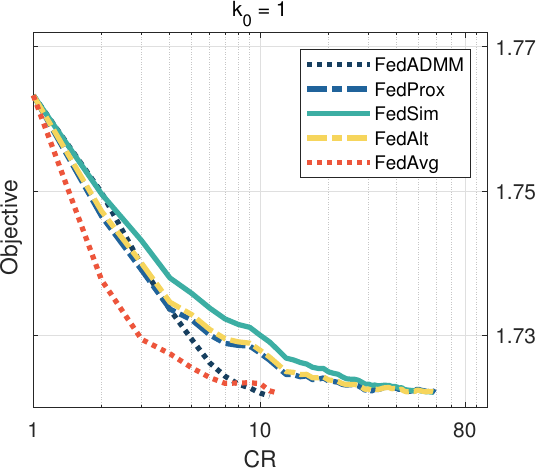}
\end{subfigure}	
\begin{subfigure}{.245\textwidth}
	\centering
	\includegraphics[width=.995\linewidth]{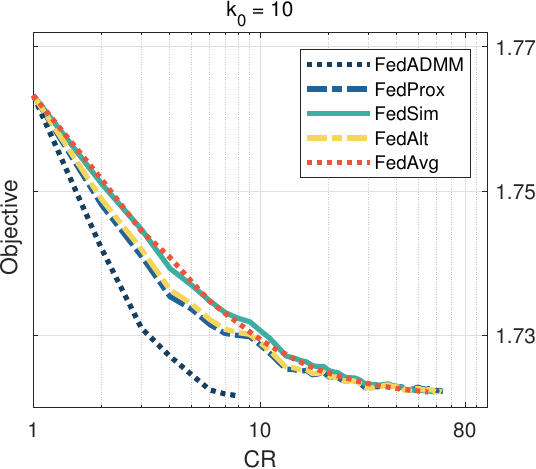}
\end{subfigure} 
~\vspace{1mm}\\
\begin{subfigure}{.245\textwidth}
	\centering
	\includegraphics[width=.995\linewidth]{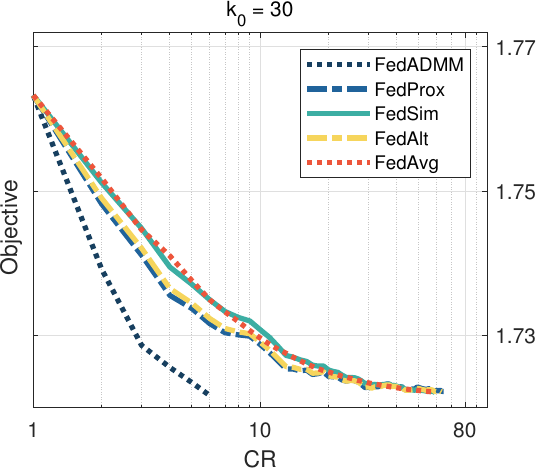}
\end{subfigure}  
\begin{subfigure}{.245\textwidth}
	\centering
	\includegraphics[width=.995\linewidth]{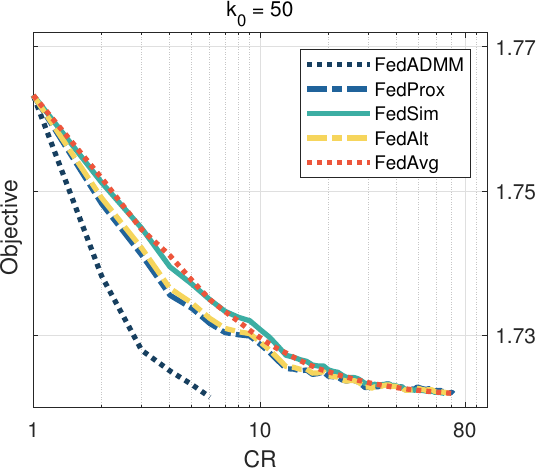}
\end{subfigure}  \vspace{-2mm}
\caption{Objective v.s. CR for Example \ref{ex-linear}.\label{fig:iterations-grad}}
 
\end{figure}

{We next generate 20 instances and report the results in terms of the median values in Figure  \ref{fig:effect-k0}, where each instance is solved by one algorithm under different choices of $k_0\in\{1,5,10,\cdots,50\}$. For example, when $k_0=10$, \ADMM\ solves the 20 instances and obtains 20 values of CR. The data reported in the figure is the median of these 20 values. Based on the results presented in Figure  \ref{fig:effect-k0},  we have the following comments. For Example \ref{ex-linear}, when $k_0$ is increasing, there is a descending trend of CR for \ADMM\ but an ascending trend for the other four algorithms. However, for Example \ref{ex-logist}, CR generated by every algorithm is declining with the rising of $k_0$. Apparently, for both examples the larger $k_0$ the longer the computational time and \ADMM\ behaves the best in terms of using the fewest CR and running the fastest.}

\begin{figure}[!th]
	\begin{subfigure}{.245\textwidth}
	\centering
	\includegraphics[width=.995\linewidth]{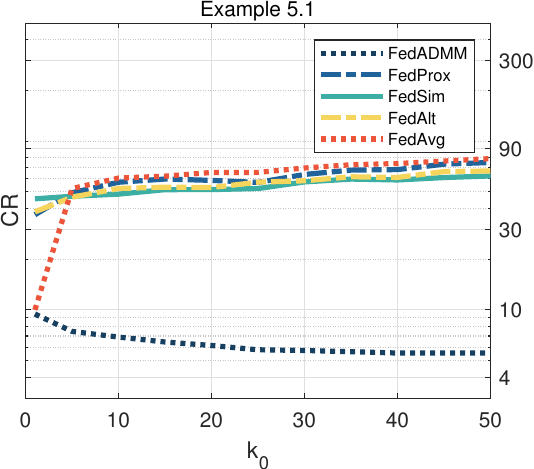}
\end{subfigure}	
\begin{subfigure}{.245\textwidth}
	\centering
	\includegraphics[width=.995\linewidth]{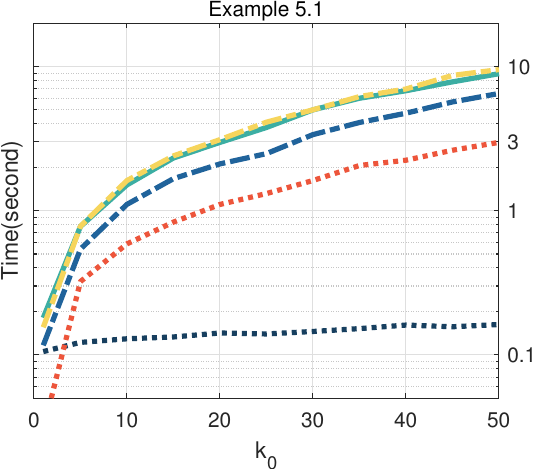}
\end{subfigure} 
~\vspace{1mm}\\ 
	\begin{subfigure}{.245\textwidth}
	\centering
	\includegraphics[width=.995\linewidth]{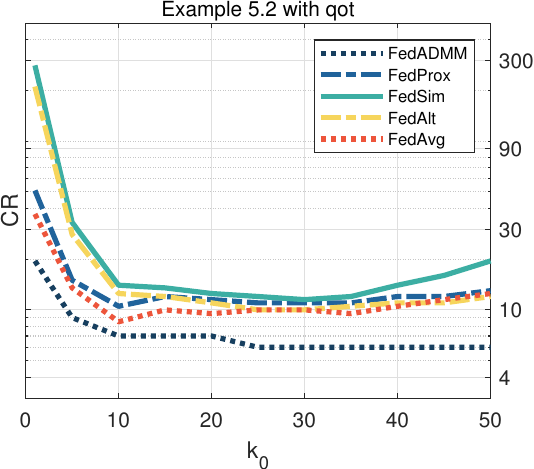}
\end{subfigure}	
\begin{subfigure}{.245\textwidth}
	\centering
	\includegraphics[width=.995\linewidth]{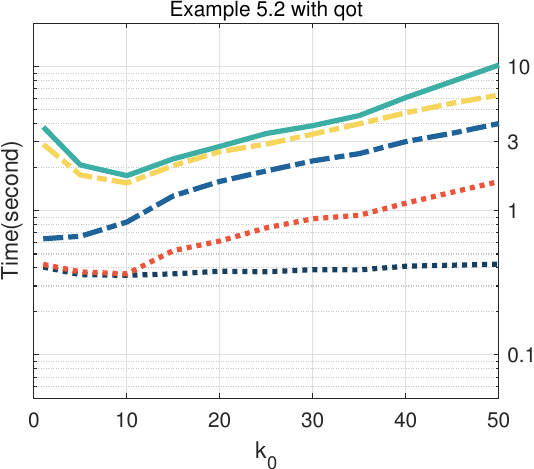}
\end{subfigure} \vspace{-5mm}
\caption{Effect of $k_0$.\label{fig:effect-k0}}
 
\end{figure}

\subsubsection{Effect of participation rate $\rho$}  { To see the effect of participation rate $\rho$  on the performance of each algorithm, we fix $(n,m,k_0){=}(100,100,10)$ and alter $\rho{\in}\{0.1,0.2,\cdots,0.9\}$.
Similarly, we report the median values over 20 instances in Figure  \ref{fig:effect-rho}. First, for Example \ref{ex-linear}, when $\rho$ is getting bigger (i.e., more and more clients are selected for the training), as expected that every algorithm consumes fewer CR, which results in shorter computational time. However, the picture for Example \ref{ex-logist} is slightly different. From the figure, when $\rho$ is varying, CR stabilizes at a certain level for \ADMM\ while sightly fluctuating for the other four algorithms. Basically, the higher value of $\rho$ the longer computational time spent by every algorithm for this example. Once again,  \ADMM\ outperforms the other algorithms for most scenarios.
}

\begin{figure}[H]
	\begin{subfigure}{.245\textwidth}
	\centering
	\includegraphics[width=.995\linewidth]{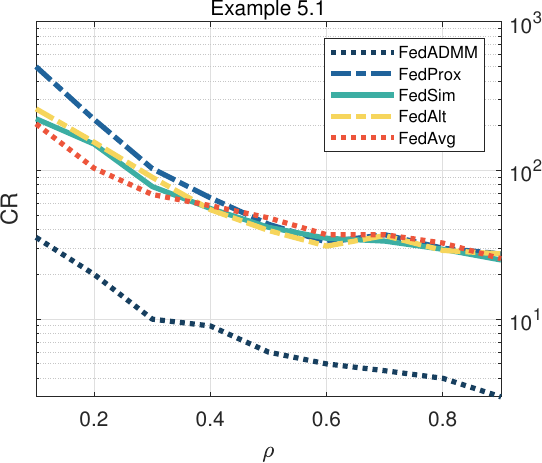}
\end{subfigure}	
\begin{subfigure}{.245\textwidth}
	\centering
	\includegraphics[width=.995\linewidth]{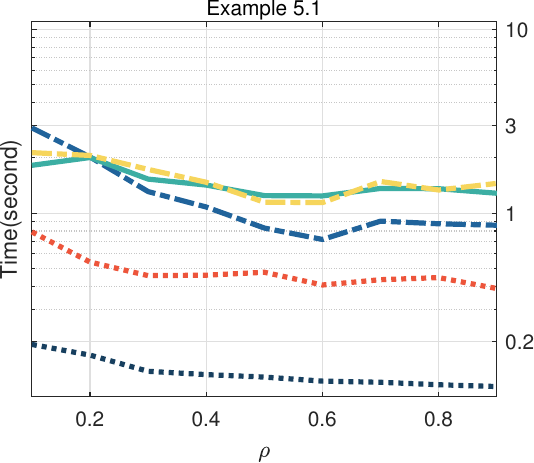}
\end{subfigure} 
~\vspace{1mm}\\ 
	\begin{subfigure}{.245\textwidth}
	\centering
	\includegraphics[width=.995\linewidth]{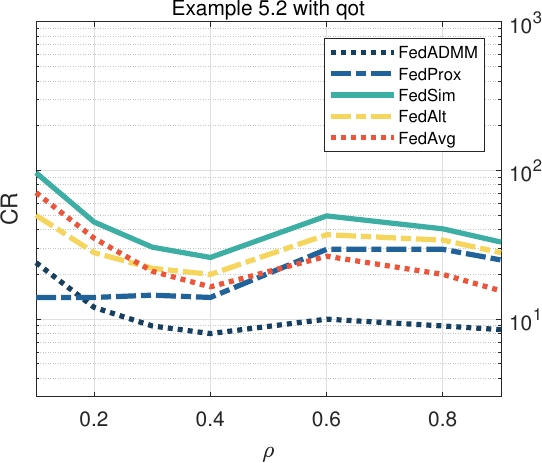}
\end{subfigure}	
\begin{subfigure}{.245\textwidth}
	\centering
	\includegraphics[width=.995\linewidth]{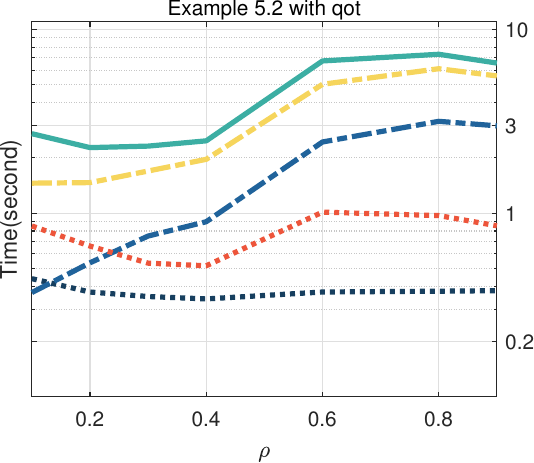}
\end{subfigure} \vspace{-2mm}
\caption{Effect of $\rho$.\label{fig:effect-rho}}
\end{figure}
\subsubsection{Effect of $m$} {Similarly, we fix $(n,\rho,k_0)=(100,0.5,10)$ but alter $m\in\{50,100,150,200\}$.   The median results over 20 instances are presented in Figure  \ref{fig:effect-m}.  For Example \ref{ex-linear}, it can be clearly seen that the bigger $m$ (i.e. the more clients joining in the training) the fewer CR but the longer computational time. This is reasonable as every algorithm needs to perform more computation. By contrast, for Example \ref{ex-logist}, there is a declining trend of CR for \ADMM\ along with $m$ increasing. However, CR used the other algorithms is descending when $m\in\{50,100\}$ and ascending afterwords.   Once again,  \ADMM\ achieves the best results, i.e., the lowest CR and the shortest time almost for all scenarios.}

\subsubsection{Effect of larger sizes} 
{Finally, we compare five algorithms for solving problems in larger sizes by fixing $(\rho,k_0)=(0.5,10)$ but altering $m\in\{1000,2000,3000,4000\}$ for both examples. In addition, we also fix $n=1000$ for Example \ref{ex-linear}. Now according to the generation of Example \ref{ex-linear}, each instance has $n=1000$ features and $d\in[50000, 600000]$ total samples. We use dataset {\tt rls} for Example \ref{ex-logist} since it has much more features and samples (i.e., $n=20958$ and $d=72309$). Again, we report the results in terms of the median values of 20 instances in Figure \ref{fig:effect-hig}. For Example \ref{ex-linear},  there are five descending trends for CR and ascending trends for the time. However, for Example \ref{ex-logist}, the larger $m$ the higher CR and the time. We find that \ADMM\ runs much faster than the others (e.g., when $m=4000$, \ADMM, \FP, \FS, \FAT, and \FA\ consume 73,	728,	 4711,	3834, and	475 seconds, respectively).
As always, \ADMM\ produces the most desirable results.}

\begin{figure}[h]
	\begin{subfigure}{.245\textwidth}
	\centering
	\includegraphics[width=.995\linewidth]{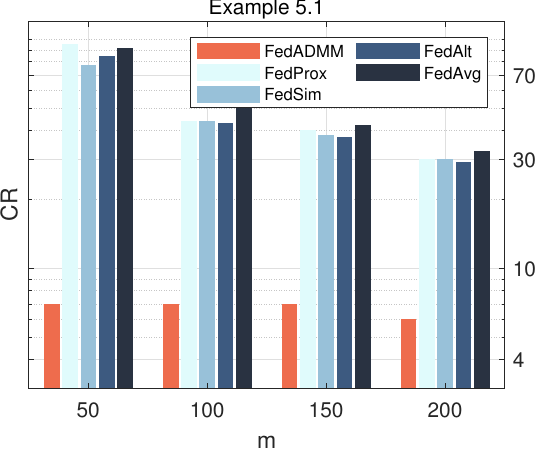}
\end{subfigure}	
\begin{subfigure}{.245\textwidth}
	\centering
	\includegraphics[width=.995\linewidth]{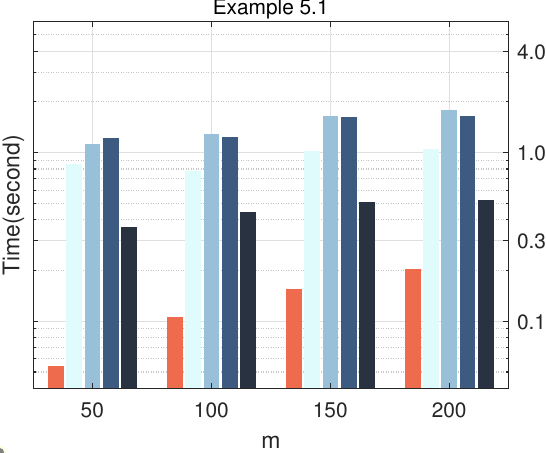}
\end{subfigure} 
~\vspace{1mm}\\ 
	\begin{subfigure}{.245\textwidth}
	\centering
	\includegraphics[width=.995\linewidth]{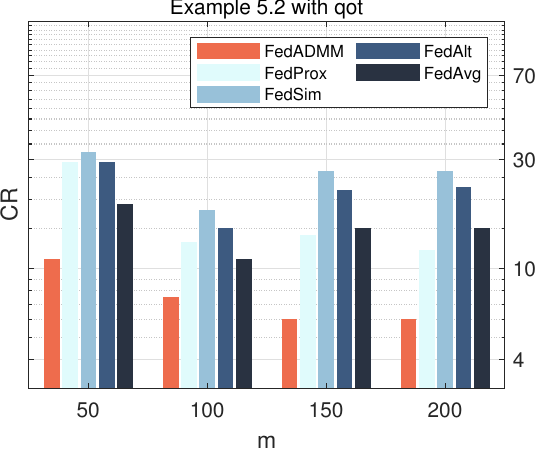}
\end{subfigure}	
\begin{subfigure}{.245\textwidth}
	\centering
	\includegraphics[width=.995\linewidth]{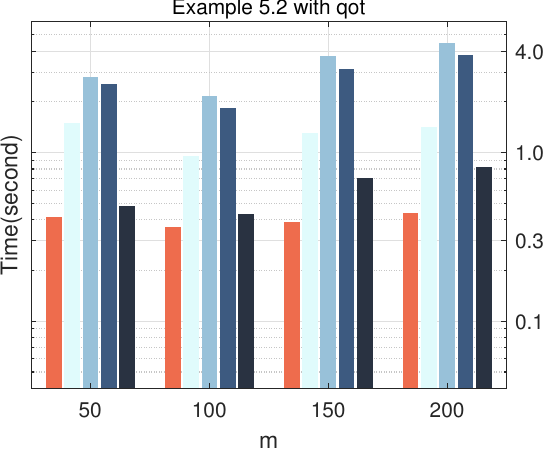}
\end{subfigure} \vspace{-2mm}
\caption{Effect of $m$.\label{fig:effect-m}}
 
\end{figure}

\begin{figure}[!th]
	\begin{subfigure}{.245\textwidth}
	\centering
	\includegraphics[width=.995\linewidth]{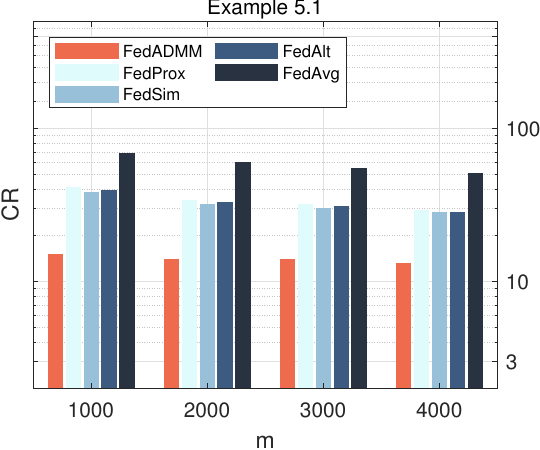}
\end{subfigure}	
\begin{subfigure}{.245\textwidth}
	\centering
	\includegraphics[width=.995\linewidth]{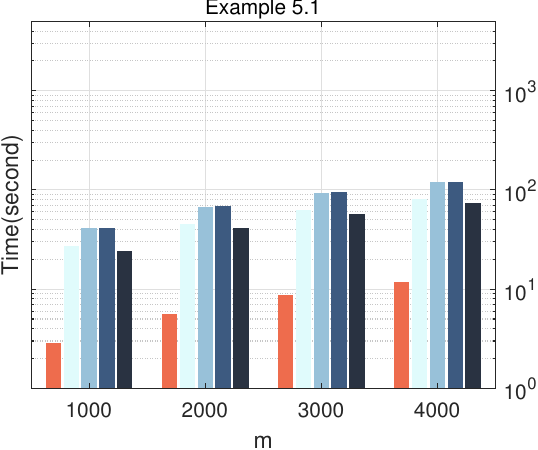}
\end{subfigure} 
~\vspace{1mm}\\ 
	\begin{subfigure}{.245\textwidth}
	\centering
	\includegraphics[width=.995\linewidth]{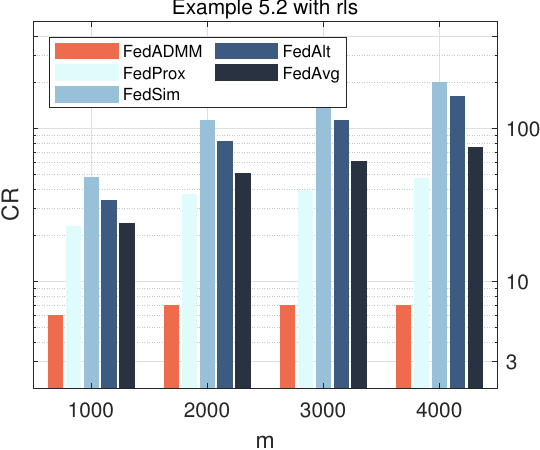}
\end{subfigure}	
\begin{subfigure}{.245\textwidth}
	\centering
	\includegraphics[width=.995\linewidth]{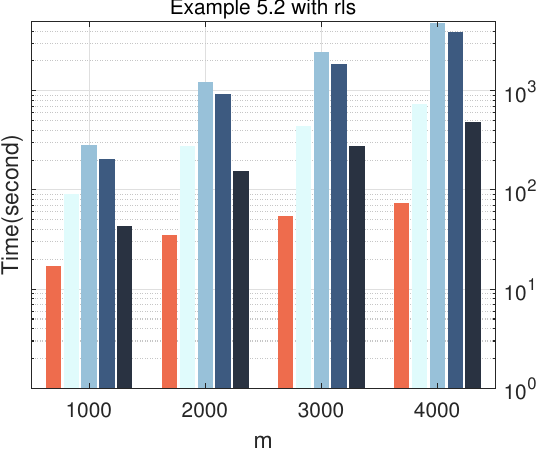}
\end{subfigure} \vspace{-2mm}
\caption{Effect of larger sizes.\label{fig:effect-hig}}\vspace{-4mm}
 \end{figure}
 
\section{Conclusion}
We developed an inexact ADMM-based FL algorithm. The periodic global averaging allows it to reduce CR so as to save communication resources. Solving sub-problems inexactly alleviates clients' computational burdens significantly, thereby accelerating the learning process. Partial device participation in the algorithm eliminates the stragglers' effect. Those merits show the strong potential of \ADMM\ for real-world applications like vehicular communications, mobile edge and over-the-air computing. 

\bibliographystyle{IEEEtran}
\bibliography{ref}


\appendices
 \section{Some basics}
 For any $\bx_1, \bx_2  $, and $\bx_i  \in\{\bx_1, \bx_2  \}$, it follows that $\bx_2  +t(\bx_1-\bx_2  ) -  \bx_i = (t-1)(\bx_1-\bx_2  )$  or $= t(\bx_1-\bx_2  )$. If function is gradient Lipschitz continuous with constant $r$, then the Mean Value Theorem suffices to
\begin{eqnarray}  \label{H-Lip-continuity-fxy}
 \arraycolsep=1.0pt\def\arraystretch{1.5}  \begin{array}{llll}
&&f(\bx_1) - f(\bx_2    ) -\langle \nabla  f(\bx_i  ), \bx_1-\bx_2     \rangle \\
&=&  \int_0^1 \langle \nabla  f(\bx_2  +t(\bx_1-\bx_2  )) - \nabla  f(\bx_i ), \bx_1-\bx_2     \rangle dt  \\
&\leq&  \int_0^1 r\|\bx_2  +t(\bx_1-\bx_2  ) -  \bx_i \|\| \bx_1-\bx_2     \| dt\\
&=& \frac{r}{2}\| \bx_1-\bx_2     \|^2.
 \end{array}
\end{eqnarray}  
If $\nabla^2  f\succeq -s I$, then the Mean Value Theorem brings out 
\begin{eqnarray}  \label{Lip-continuity-lowbd}
 \arraycolsep=1.0pt\def\arraystretch{1.5}  \begin{array}{llll}
&&\langle \nabla f(\bx_1) -  \nabla f(\bx_2    ) , \bx_1-\bx_2     \rangle \\
&=&  \int_0^1 \langle \nabla^2  f(\bx_2  +t(\bx_1-\bx_2  ))(\bx_1-\bx_2), \bx_1-\bx_2\rangle \rangle dt  \\
&\geq&   -s \| \bx_1-\bx_2     \|^2.
 \end{array}
\end{eqnarray}   
For any  vectors $\bx_i$,  matrix $H\succeq0$, and $t>0$, we have
\begin{eqnarray} \label{two-vecs}
 \arraycolsep=1.0pt\def\arraystretch{1.5}
 \begin{array}{lcl} 
2 \langle \bx_1, \bx_2\rangle &\leq &  t\|\bx_1\|^2 + ({1}/{t})\|\bx_2\|^2,\\
\|\bx_1+\bx_2\|^2 &\leq& (1+t)\|\bx_1\|^2+(1+ 1/t)\|\bx_2\|^2,\\
\|\sum_{i=1}^m\bx_i\|^2 &\leq& m\sum_{i=1}^m\|\bx_i\|^2 .
 \end{array}
\end{eqnarray}
For notational simplicity, hereafter, we denote
\begin{eqnarray} 
 \label{decreasing-property-0}   
 \arraycolsep=1.0pt\def\arraystretch{1.5}
 \begin{array}{lllllll}
\triangle\bx^{\tk}&:=&\bx^{\tk}-\bx^{\tau_k}~&\triangle\blx_i^{k+1}&:=&\bx^{k+1}_i-\bx^{\tk},~~\\
\triangle\bx_i^{k+1}&:=&\bx_i^{k+1}-\bx_i^{k},~&\triangle\bpi_i^{k+1}&:=&\bpi_i^{k+1}-\bx_i^{k},\\
\triangle\epsilon_i^{k+1}&:=&\epsilon_i^{k+1}-\epsilon_i^{k},
    \end{array}
 \end{eqnarray} 
 and let   $\bx^{k} \rightarrow \bx$ stand for $\lim_{k\rightarrow\infty} \bx^{k} = \bx$.  In the sequel, for notational simplicity we write
\begin{eqnarray*} 
 \arraycolsep=1.0pt\def\arraystretch{1.5}
 \begin{array}{lllllll}
\sum_i:=\sum_{i=1}^m.
    \end{array}
 \end{eqnarray*} 
 \section{Proof of  Theorem \ref{finite-stop}} 
{ The main idea to prove this theorem is to show  \begin{eqnarray}\label{v-ell-v-*}
   \arraycolsep=0pt\def\arraystretch{1.5}
   \begin{array}{l} 
   \|\bv _i^{\ell+1}-\bv_i^{*} \|^2  \leq \varrho^{-1} \|\bv _i^{\ell}-\bv_i^{*} \|^2, \end{array}
\end{eqnarray} which can be verified by using the optimality condition of problem \eqref{update-w-l} that is satisfied by $\bv _i^{\ell+1}$ .}  
 \begin{proof} Since $\bv_i^{\ell+1}$ is a solution to problem \eqref{update-w-l}, it satisfies the following optimality condition, 
 \begin{eqnarray*}
   \arraycolsep=0pt\def\arraystretch{1.5}
 \begin{array}{llll}
\alpha_{i}   \nabla f_i(\bv _i^{\ell}) + \bpi_i^k + \sigma_i(\bv_i^{\ell+1}-\bx^{\tk})+  \alpha_{i}r_i(\bv_i^{\ell+1}-\bv _i^{\ell})=0,
 \end{array}
\end{eqnarray*}
which subtracting \eqref{update-w-*} gives rise to
 \begin{eqnarray*}
   \arraycolsep=1.0pt\def\arraystretch{1.5}
 \begin{array}{llll}
&-&\sigma_i(\bv_i^{\ell+1}-\bv_i^{*})- \alpha_{i}r_i(\bv_i^{\ell+1}-\bv _i^{\ell})\\
&=&\alpha_{i} (\nabla f_i(\bv _i^{\ell})- \nabla f_i(\bv_i^{*}))\\
&=&\alpha_{i} (\nabla f_i(\bv _i^{\ell})- \nabla f_i(\bv_i^{\ell+1})+\nabla f_i(\bv _i^{\ell+1})- \nabla f_i(\bv_i^{*})).
 \end{array}
\end{eqnarray*}
Using the condition allows us to obtain
 \begin{eqnarray*}
   \arraycolsep=1.0pt\def\arraystretch{1.5}
 \begin{array}{lcl}
&-&\alpha_{i}s_i\|\bv _i^{\ell+1}-\bv_i^{*} \|^2\\
&\overset{\eqref{Lip-continuity-lowbd}}{\leq}&
\langle  \bv _i^{\ell+1}-\bv_i^{*} , \alpha_{i} (\nabla f_i(\bv _i^{\ell+1})- \nabla f_i(\bv_i^{*}) \rangle\\ 
&=&
\langle \bv _i^{\ell+1}-\bv_i^{*}, -\sigma_i(\bv_i^{\ell+1}-\bv_i^{*})- \alpha_{i}r_i(\bv_i^{\ell+1}-\bv _i^{\ell}) \rangle\\ 
&+&\langle \bv _i^{\ell+1}-\bv_i^{*} , \alpha_{i} (\nabla f_i(\bv _i^{\ell+1})- \nabla f_i(\bv_i^{\ell})  \rangle\\ 
&=&
\langle \bv _i^{\ell+1}-\bv_i^{*}, -\sigma_i(\bv_i^{\ell+1}-\bv_i^{*})- \alpha_{i}r_i(\bv_i^{\ell+1}-\bv _i^{\ell}) \rangle\\ 
&+&\alpha_{i} \langle \sqrt{r_i}(\bv _i^{\ell+1}-\bv_i^{*} ), \sqrt{1/r_i}  (\nabla f_i(\bv _i^{\ell+1})- \nabla f_i(\bv_i^{\ell})  \rangle\\ 
&\leq& -(\sigma_i+\alpha_{i}r_i) \|\bv _i^{\ell+1}-\bv_i^{*} \|^2- \alpha_{i}r_i\langle \bv _i^{\ell+1}-\bv_i^{*},  \bv_i^{*}-\bv _i^{\ell} \rangle\\
&+&\frac{\alpha_{i}r_i}{2}(\|\bv _i^{\ell+1}-\bv_i^{*} \|^2+\|\bv _i^{\ell+1}-\bv_i^{\ell} \|^2)\\
&=& -(\sigma_i+\alpha_{i}r_i/2) \|\bv _i^{\ell+1}-\bv_i^{*} \|^2\\
&+&\frac{\alpha_{i}r_i}{2}( \|\bv _i^{\ell+1}-\bv_i^{*}+  \bv_i^{*}-\bv_i^{\ell} \|^2- 2\langle \bv _i^{\ell+1}-\bv_i^{*},  \bv_i^{*}-\bv _i^{\ell} \rangle)\\
&=& -\sigma_i\|\bv _i^{\ell+1}-\bv_i^{*} \|^2 + \frac{\alpha_{i}r_i}{2} \|\bv _i^{\ell}-\bv_i^{*} \|^2\\
&\leq& -(\alpha_{i} s_i+\frac{\varrho \alpha_{i}r_i}{2}) \|\bv _i^{\ell+1}-\bv_i^{*} \|^2 + \frac{\alpha_{i}r_i}{2} \|\bv _i^{\ell}-\bv_i^{*} \|^2,
 \end{array}
\end{eqnarray*}
which immediately results in \eqref{v-ell-v-*}, thereby leading to
 \begin{eqnarray*}
   \arraycolsep=1.0pt\def\arraystretch{1.5}
 \begin{array}{lcccc}
\|\bv _i^{\kappa+1}-\bv_i^{*} \|^2&\leq&\frac{1}{\varrho}\|\bv _i^{\kappa}-\bv_i^{*} \|^2 &\leq& \frac{1}{\varrho^2}\|\bv _i^{\kappa-1}-\bv_i^{*} \|^2\\
&\leq& \cdots&\leq&\frac{1}{\varrho^{\kappa+1}}\|\bv _i^{0}-\bv_i^{*} \|^2.
 \end{array}
\end{eqnarray*}
Now letting $ \bx^{k+1}_i= \bv^{\kappa+1}_i$,  we verify the condition in \eqref{iceadmm-sub2} by
 \begin{eqnarray*}
   \arraycolsep=1.0pt\def\arraystretch{1.5}
 \begin{array}{lcl}
&& \|\bg_i^{k+1}  +  \bpi_i^{k}+\sigma_i(\bx_i^{k+1}-\bx^{\tk})\|^2\\
&\overset{\eqref{update-w-*}}{=}&\|\bg_i^{k+1} - \alpha_{i}\nabla f_i(\bv_i^{*})  +  \sigma_i(\bx_i^{k+1}-\bv_i^{*})\| ^2\\
&\leq&2(\alpha_{i}^2r_i^2+\sigma_i^2)\|\bx_i^{k+1}-\bv_i^{*}\| ^2\\
&\leq&2(\alpha_{i}^2r_i^2+\sigma_i^2)/{\varrho^{\kappa+1}}\|\bv _i^{0}-\bv_i^{*} \|^2
 \leq \epsilon_i^{k+1},
 \end{array}
\end{eqnarray*}
which by $\bv _i^{0}=\bx^{\tk}$ implies that
 \begin{eqnarray*}
   \arraycolsep=1.0pt\def\arraystretch{1.5}
 \begin{array}{lcl}
\kappa = \log_{\varrho}\left \lceil \frac{2(\alpha_{i}^2r_i^2+\sigma_i^2)\|\bx^{\tk} -\bv_i^{*}\|^2}{\epsilon_i^{k+1}}  \right \rceil -1.
 \end{array}
\end{eqnarray*}  
The whole proof is finished.
\end{proof}   

\section{Proofs of theorems in Section \ref{sec:convergence}}

\subsection{Key lemma}
\begin{lemma}\label{basic-observations}  The following statements are valid.

\noindent a) For any  $k\in\K$,
\begin{eqnarray}
 \label{opt-con-xk1-1}
\begin{array}{rcll}
\sum_{i}  ( \sigma_i(\bx_i^{k}-\bx^{\tau_{k+1}})+\bpi_i^{k})=0.
\end{array} \end{eqnarray} 
b) For  any  $k\geq 0$ and any $i\in[m]$,
\begin{eqnarray}
 \label{opt-con-xk1-2}
\begin{array}{rcll} 
{\|\bva_i^{k+1}\|^2\leq \epsilon_i^{k+1},~~\text{where}~~ \bva_i^{k+1}:=\bg_i^{k+1}  +  \bpi_i^{k+1}.}
\end{array} \end{eqnarray} 
c) Under Assumption \ref{ass-fi}, for  any  $k\geq 0$ and any $i\in[m]$,
\begin{eqnarray}
 \label{opt-con-xk1-3}
 \arraycolsep=0pt\def\arraystretch{1.5}
\begin{array}{rcll} 
 \|\triangle \bpi_i^{k+1} \|^2  \leq   \frac{6\alpha_{i}^2r_i^2}{5} \|\triangle\bx_i^{k+1} \|^2 -\frac{24}{1-\nu_{i}}\triangle\epsilon_i^{k+1}.
\end{array} \end{eqnarray} 
d) Under Scheme \ref{ass-omega}, for any $i\in[m]$,  
\begin{eqnarray}
 \label{opt-con-xk1-4}
\begin{array}{rcll}
\epsilon_i^{k+1}\to 0.
\end{array} \end{eqnarray} 
\end{lemma}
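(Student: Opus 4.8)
The plan is to establish the four parts in sequence, since (c) relies on (b) and the whole lemma feeds the descent estimate of Lemma \ref{lemma-decreasing-1}.

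\textbf{Part (a)} I would read off directly from the averaging rule. For $k\in\K$ the local-invariance bookkeeping gives $\bx^{\tk}=\frac{1}{\sigma}\sum_i\bz_i^k$ with $\bz_i^k=\sigma_i\bx_i^k+\bpi_i^k$ for every $i$ (the recorded value $\bz_i^{k_i}$ is used when $i\notin\Omega^{\tau_k}$). Multiplying by $\sigma=\sum_i\sigma_i$ and writing $\sigma\bx^{\tk}=\sum_i\sigma_i\bx^{\tk}$, regrouping the terms yields $\sum_i(\sigma_i(\bx_i^k-\bx^{\tk})+\bpi_i^k)=0$, which is \eqref{opt-con-xk1-1}.

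\textbf{Part (b)} I would split on whether $i$ is selected and induct on $k$. If $i\in\tauk$, then \eqref{iceadmm-sub3} gives $\bpi_i^{k+1}=\bpi_i^k+\sigma_i(\bx_i^{k+1}-\bx^{\tk})$, so $\bva_i^{k+1}=\bg_i^{k+1}+\bpi_i^k+\sigma_i(\bx_i^{k+1}-\bx^{\tk})$, and the inexactness test \eqref{iceadmm-sub2} is precisely $\|\bva_i^{k+1}\|^2\le\epsilon_i^{k+1}$. If $i\notin\tauk$, rule \eqref{iceadmm-sub5} freezes $(\epsilon_i,\bx_i,\bpi_i)$, so $\bva_i^{k+1}=\bva_i^k$ and $\epsilon_i^{k+1}=\epsilon_i^k$, and the inductive hypothesis closes the step; the base case holds because $\bpi_i^0=-\bg_i^0$ forces $\bva_i^0=0$.

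\textbf{Part (c)} is the most computational. Using $\bpi_i^{k+1}=\bva_i^{k+1}-\bg_i^{k+1}$ from (b), I would write $\triangle\bpi_i^{k+1}=(\bva_i^{k+1}-\bva_i^k)-(\bg_i^{k+1}-\bg_i^k)$ and apply the second inequality of \eqref{two-vecs} with $t=1/5$, so the gradient term carries coefficient $1+t=6/5$; Assumption \ref{ass-fi} then gives $\frac{6}{5}\|\bg_i^{k+1}-\bg_i^k\|^2\le\frac{6\alpha_i^2r_i^2}{5}\|\triangle\bx_i^{k+1}\|^2$. For the leftover $6\|\bva_i^{k+1}-\bva_i^k\|^2$ I would use $\|\ba-\bb\|^2\le 2\|\ba\|^2+2\|\bb\|^2$ with part (b) and $\epsilon_i^{k+1}\le\nu_i\epsilon_i^k\le\epsilon_i^k$ to bound it by $24\epsilon_i^k$; finally \eqref{iceadmm-sub20} supplies $(1-\nu_i)\epsilon_i^k\le-\triangle\epsilon_i^{k+1}$, turning $24\epsilon_i^k$ into $-\frac{24}{1-\nu_i}\triangle\epsilon_i^{k+1}$ and giving \eqref{opt-con-xk1-3}.

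\textbf{Part (d)} carries the genuine content. Since $\epsilon_i^k$ is nonincreasing — multiplied by $\nu_i<1$ whenever $i$ is selected and frozen otherwise — it suffices to count selections. Scheme \ref{ass-omega} guarantees that in every block of $s_0$ consecutive selection rounds client $i$ appears at least once, so after $\tau_{k+1}$ rounds it has been selected at least $\lfloor\tau_{k+1}/s_0\rfloor$ times, each selection applying at least one geometric reduction; hence $\epsilon_i^{k+1}\le\nu_i^{\lfloor\tau_{k+1}/s_0\rfloor}\epsilon_i^0$. As $k\to\infty$ we have $\tau_{k+1}\to\infty$ and $\nu_i<1$, forcing $\epsilon_i^{k+1}\to 0$. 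The main obstacle here is bookkeeping rather than analysis: correctly relating the per-step index $k$ to the per-round index $\tau_{k+1}=\lceil(k+1)/k_0\rceil$, translating the covering property of Scheme \ref{ass-omega} into a lower bound on the number of reductions applied to $\epsilon_i$, and checking that the freezing steps neither raise $\epsilon_i$ nor interrupt the geometric decay.
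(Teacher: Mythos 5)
Your parts a), b), and d) follow essentially the same route as the paper's own proof. In a) the crux is that $\bz_i^k=\sigma_i\bx_i^k+\bpi_i^k$ holds for \emph{every} client at every step (selected, frozen, or never selected, the last case via the initialization $\bz_i^0=\sigma_i\bx_i^0+\bpi_i^0$); the paper proves this by tracking the last selection index $k_i$ and the freezing rule \eqref{iceadmm-sub5}, and your sketch gestures at exactly that bookkeeping before regrouping with \eqref{iceadmm-sub1}. Your b) is the paper's argument recast as an induction on $k$, with the base case $\bva_i^0=\bg_i^0+\bpi_i^0=0$ handled by the initialization $\bpi_i^0=-\bg_i^0$; this is equivalent to (and arguably cleaner than) the paper's use of the last-selection index. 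Your d) is the same selection-counting argument: Scheme \ref{ass-omega} forces at least one application of \eqref{iceadmm-sub20} per block of $s_0$ rounds, giving geometric decay. Part c) also uses the paper's decomposition with the same constants ($t=1/5$ in \eqref{two-vecs}, yielding $6/5$ on the gradient term and $6$ on the residual term), and your shortcut $12(\epsilon_i^{k+1}+\epsilon_i^k)\le 24\epsilon_i^k$ is an immaterial variant of the paper's $\frac{12(1+\nu_i)}{1-\nu_i}$ bound.

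There is, however, one genuine gap in part c): your chain is only valid for $i\in\tauk$, while the lemma asserts \eqref{opt-con-xk1-3} for every $i\in[m]$ and every $k\ge 0$ (and Lemma \ref{lemma-decreasing-1} needs it summed over all $i$). Your final step invokes \eqref{iceadmm-sub20} to get $(1-\nu_i)\epsilon_i^k\le-\triangle\epsilon_i^{k+1}$, but \eqref{iceadmm-sub20} applies only to selected clients. For $i\notin\tauk$, rule \eqref{iceadmm-sub5} gives $\triangle\epsilon_i^{k+1}=0$ while $\epsilon_i^k$ may be strictly positive, so the inequality $24\epsilon_i^k\le-\frac{24}{1-\nu_i}\triangle\epsilon_i^{k+1}$ is simply false there, and your derivation does not establish the claim for frozen clients. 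The repair is the paper's one-line case split: for $i\notin\tauk$, \eqref{iceadmm-sub5} forces $\triangle\bpi_i^{k+1}=0$, $\triangle\bx_i^{k+1}=0$, and $\triangle\epsilon_i^{k+1}=0$, so \eqref{opt-con-xk1-3} holds trivially as $0\le 0$. With that sentence added, your proof is complete and coincides with the paper's.
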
  
\begin{proof} a) For any $i\in[m]$ and at $(k+1)$th iteration, let $k_i$ be the largest integer in $[-1,k]$ such that $i\in\Omega^{\tau_{k_i+1}}$. This implies that client $i$ is not selected in all $\Omega^{\tau_{k_i+2}}, \Omega^{\tau_{k_i+3}} \cdots ,\Omega^{\tk}$, which by \eqref{iceadmm-sub5}  yields
 \begin{eqnarray}\label{par-invariance}
\arraycolsep=0pt\def\arraystretch{1.5}
\begin{array}{rcl}
(\epsilon_i^{\ell+1}, \bx^{\ell+1}_i, \bpi^{\ell+1}_i, \bz^{\ell+1}_i)&\equiv&(\epsilon_i^{k_i+1}, \bx^{k_i+1}_i,  \bpi^{k_i+1}_i, \bz^{k_i+1}_i),\\
\forall \ell&= &k_i,k_i+1, \cdots ,k.
\end{array} \end{eqnarray}
For any client $i\in\tauk$, we have \eqref{iceadmm-sub4}. For any client $i\notin\Omega^{\tk}$, if ${k_i   }\geq0$,  then $( \bx^{k_i+1}_i,  \bpi^{k_i+1}_i, \bz^{k_i+1}_i)$ also satisfies \eqref{iceadmm-sub4} due to $i\in\Omega^{\tau_{k_i+1}}$, which by condition \eqref{par-invariance} implies that $(\bx^{k+1}_i, \bpi^{k+1}_i, \bz^{k+1}_i)$ satisfies \eqref{iceadmm-sub4}.  If $k_i=-1$, this means that is client $i$ has never been selected. Then by \eqref{par-invariance} and our initialization, we have  \begin{eqnarray*} 
\arraycolsep=1.0pt\def\arraystretch{1.5}
\begin{array}{lcl}\bz^{k+1}_i=\bz^{k_i+1}_i=\bz^{0}_i =\sigma_i\bx_i^{0} +\bpi_i^{0}=\sigma_i\bx_i^{k+1} +\bpi_i^{k+1}.\end{array} \end{eqnarray*} Hence, \eqref{iceadmm-sub4} is still valid. Overall, we can conclude that \eqref{iceadmm-sub4}  holds for every $i\in[m]$ and $k\geq 0$. Now, for any $k\in\K$,  
 \begin{eqnarray*}
\arraycolsep=1.0pt\def\arraystretch{1.5}
\begin{array}{lcl}
\sum_{i}  (  \sigma_i(\bx_i^{k}-\bx^{\tau_{k+1}})+\bpi_i^{k} )
 \overset{\eqref{iceadmm-sub4}}{=} \sum_{i} \sigma_i( \bz_i^{k} -\bx^{\tau_{k+1}})   \overset{\eqref{iceadmm-sub1}}{=} 0.
\end{array} \end{eqnarray*} 
b) For any  $i\in\tauk$,  solution $\bx_i^{k+1}$ in \eqref{iceadmm-sub2}  satisfies  
 \begin{eqnarray}
 \label{opt-con-xik1-10}
 \arraycolsep=1.0pt\def\arraystretch{1.5}
\begin{array}{rcl}
\bva_i^{k+1} = \bg_i^{k+1}  +  \bpi_i^{k+1} 
&\overset{\eqref{iceadmm-sub3}}{=}& \bg_i^{k+1} + \bpi_i^{k}  +\sigma_i \triangle\blx_i^{k+1},\\
\|\bva_i^{k+1} \|^2&\overset{\eqref{iceadmm-sub20}}{\leq}&\epsilon_i^{k+1}.
   \end{array}
  \end{eqnarray} 
For any $i\notin\tauk$, we have  \begin{eqnarray*} 
 \arraycolsep=1.0pt\def\arraystretch{1.5}
\begin{array}{lcl}
\bva_i^{k_i+1}=\bg_i^{k_i+1}  +  \bpi_i^{k_i+1},~~\|\bva_i^{k_i+1} \|^2\leq\epsilon_i^{k_i+1}    \end{array}
  \end{eqnarray*}  due to  $i\in\Omega^{\tau_{k_i+1}}$. This together with \eqref{par-invariance} implies that \eqref{opt-con-xk1-2} is still true.  So, \eqref{opt-con-xk1-2} holds for any $i\in[m]$ and any $k\geq 0$.

c) For any $i\in\tauk$, it follows from \eqref{opt-con-xk1-2} and the gradient Lipschitz continuity of $f_i$ that
 \begin{eqnarray} \label{gap-pi-k-eps}
 \arraycolsep=1.0pt\def\arraystretch{1.5}
\begin{array}{lcl}
\|\triangle   \bpi_i^{k+1}\|^2 &\leq& \| \bg_i^{k+1} - \bg_i^{k} +\bva_i^{k+1}-\bva_i^{k}\|^2\\
&\overset{\eqref{two-vecs}}{\leq}& \frac{6\alpha_{i}^2r_i^2}{5}  \|\triangle\bx_i^{k+1} \|^2+6\| \bva_i^{k+1}-\bva_i^{k}\|^2\\
&\overset{\eqref{iceadmm-sub20}}{\leq}& \frac{6\alpha_{i}^2r_i^2}{5}  \|\triangle\bx_i^{k+1} \|^2+12(\epsilon_i^{k+1}+\epsilon_i^{k})\\
&\overset{\eqref{iceadmm-sub20}}{\leq}&\frac{6\alpha_{i}^2r_i^2}{5} \|\triangle\bx_i^{k+1} \|^2+\frac{12(1+\nu_{i})}{1-\nu_{i}}(\epsilon_i^{k}-\epsilon_i^{k+1})\\&\overset{\eqref{iceadmm-sub20}}{\leq}&\frac{6\alpha_{i}^2r_i^2}{5} \|\triangle\bx_i^{k+1} \|^2-\frac{24}{1-\nu_{i}}\triangle\epsilon_i^{k+1}.
   \end{array}
  \end{eqnarray}
  For any $i\notin\tauk$, we have  $\triangle\bpi_i^{k+1}=0$ by \eqref{iceadmm-sub5}, and thus the above condition is still valid.
  
  d) For sufficiently large $k\in\K$, any client $i$ has been selected at least $k/(s_0k_0)$ times due to \eqref{scheme-omega-2T}. This means that \eqref{iceadmm-sub20} (e.g., $\epsilon_i^{k+1}=\epsilon_i^{k}/2$ ) occurs at least $k/(s_0k_0)$ times,  thereby leading to $\epsilon_i^{k+1}\to 0$. 
\end{proof}
\subsection{Proof of Lemma \ref{lemma-decreasing-1}}
{
To estimate the upper bound of gap $(\L^{k+1}-\L^{k})$, we first decompose it into three pieces as follows 
 \begin{eqnarray} 
\label{three-cases}  
 \qquad   \begin{array}{llllll}
 \L^{k+1}-\L^{k}  =: p_1^k+p_2^k+p_3^k,
    \end{array} 
 \end{eqnarray} 
where
   \begin{eqnarray}  \label{three-cases-sub}  
 \arraycolsep=1.0pt\def\arraystretch{1.5} \begin{array}{llllll}
p_1^k&:=&\L(\bx^{\tk},W^{k},\Pi^{k})-\L(\bx^{\tau_k},W^{k},\Pi^{k}),\\
p_2^k&:=& \L(\bx^{\tk},W^{k+1},\Pi^{k})-\L(\bx^{\tk},W^{k},\Pi^{k}), \\
p_3^k&:=& \L(\bx^{\tk},W^{k+1},\Pi^{k+1})-\L(\bx^{\tk},W^{k+1},\Pi^{k}).
    \end{array} 
 \end{eqnarray} 
 Then we apply condition \eqref{opt-con-xk1-1} for $\bx^{\tk}$,  condition \eqref{opt-con-xk1-2} for $W^{k+1}$, and  conditions \eqref{iceadmm-sub3} and \eqref{iceadmm-sub5}  for $\Pi^{k+1}$ to derive the upper bounds for $p_1^k$, $p_2^k$, and $p_3^k$, respectively. Finally, adding these bounds yields the upper bound of gap $(\L^{k+1}-\L^{k})$. The detailed proof is given as follows. }
\begin{proof}  
 Since $\sigma_i\geq 3\alpha_i r_i > 0$, it follows
 \begin{eqnarray} 
\label{sigma-a-r} \begin{array}{lll}  
  0< \alpha_i r_i\leq  \frac{\sigma_i}{3}, ~~\forall i\in[m].
   \end{array}
 \end{eqnarray}
{\underline{Estimate  $p_1^k$.}} 
If  $k\in\K$, then we have \eqref{opt-con-xk1-1}  and
 \begin{eqnarray} \label{tv-0}
   \arraycolsep=1.0pt\def\arraystretch{1.5}
  \begin{array}{lcl}
 && \frac{\sigma _i}{2}\|\bx_i^k-\bx^{\tk}\|^2 -\frac{\sigma _i}{2}\|\triangle\blx_i^{k}\|^2\\
  &=&\frac{\sigma _i}{2}\|\bx_i^k-\bx^{\tk}\|^2 -\frac{\sigma _i}{2}\|\bx_i^k-\bx^{\tk}+ \triangle\bx^{\tk}\|^2\\
  &=& \langle\triangle\bx^{\tk},    -\sigma _i(\bx_i^k-\bx^{\tk})\rangle-  \frac{\sigma_i}{2} \| \triangle\bx^{\tk}\|^2.
    \end{array} 
 \end{eqnarray} 
The  fact allows  us to derive that
\begin{eqnarray}  \label{two-cases-3}  
 \arraycolsep=1.0pt\def\arraystretch{1.5}
 \begin{array}{lcl}
  p_1^k  &\overset{\eqref{Def-L}}{=}& \sum_{i}    \langle  \triangle\bx^{\tk}, -\bpi_i^k\rangle\\
    &+&  \sum_{i} (  \frac{\sigma _i}{2}\|\bx_i^k-\bx^{\tk}\|^2 -\frac{\sigma _i}{2}\|\bx_i^k-\bx^{\tau_k}\|^2 )\\
    &\overset{\eqref{tv-0}}{=}& \sum_{i}    \langle\triangle\bx^{\tk}, -\bpi_i^k-\sigma _i(\bx_i^k-\bx^{\tk})\rangle\\
&-& \sum_{i} \frac{\sigma_i}{2} \| \triangle\bx^{\tk}\|^2 \\
&\overset{\eqref{opt-con-xk1-1}}{=}&- \sum_{i} \frac{\sigma_i}{2} \| \triangle\bx^{\tk}\|^2 \\
&\leq& - \sum_{i} \frac{\sigma_i}{10} \| \triangle\bx^{\tk}\|^2.
    \end{array} 
 \end{eqnarray}
 If  $k\notin\K$, then $\bx^{\tk}=\bx^{\tau_k}$, thereby aslo leading to
\begin{eqnarray} 
\label{two-cases-1} \begin{array}{lll}  
   p_1^k \overset{\eqref{three-cases-sub}}{=}  0 =  - \sum_{i} \frac{\sigma_i}{10} \| \triangle\bx^{\tk}\|^2. 
   \end{array}
 \end{eqnarray}
{\underline{Estimate  $p_2^k$.}}  We consider two cases: $i\in\tauk$ and $i\notin\tauk$. For case $i\in\tauk$, it follows from \eqref{two-vecs} that
 \begin{eqnarray} \label{tv-1}
   \arraycolsep=1.0pt\def\arraystretch{1.5}
  \begin{array}{lcl}
 && \frac{\sigma_i }{2}\|\triangle\blx_i^{k+1} \|^2 -\frac{\sigma_i }{2}\|\bx_i^{k}-\bx^{\tau_{k+1}}\|^2\\
  &=& \langle \triangle\bx_i^{k+1},   \sigma_i \triangle\blx_i^{k+1}\rangle-  \frac{\sigma_i}{2} \| \triangle\bx_i^{k+1}\|^2.
    \end{array} 
 \end{eqnarray} 
  Let $u_i$ be defined as  follows,
 \begin{eqnarray*}
 \arraycolsep=1.0pt\def\arraystretch{1.5}\begin{array}{lcl}
  u_{i}^k  
   &\overset{\eqref{Def-L}}{:=}&     L(\bx^{\tk},\bw_i^{k+1},\bpi_i^{k})-L(\bx^{\tk},\bw_i^{k},\bpi_i^{k}) \\
   &\overset{\eqref{Def-L}}{=}&      \alpha_{i}  f_i(\bx_i^{k+1})- \alpha_{i} f_i(\bx_i^k)   +  \langle \triangle \bx_i^{k+1} , \bpi_i^k\rangle\\
     &+ &       \frac{\sigma _i}{2}\|\triangle\blx_i^{k+1}\|^2 -\frac{\sigma _i}{2}\|\bx_i^{k}-\bx^{\tk}\|^2 \\ 
      &\overset{\eqref{tv-1},\eqref{iceadmm-sub3} }{=}&      \alpha_{i}  f_i(\bx_i^{k+1})- \alpha_{i} f_i(\bx_i^k)     \\
     &+ &  \langle \triangle \bx_i^{k+1} , \bpi_i^{k+1}\rangle   -  \frac{\sigma_i}{2} \| \triangle\bx_i^{k+1}\|^2.
    \end{array} 
 \end{eqnarray*}
Now, by using the above condition we have  
\begin{eqnarray*}
 \arraycolsep=1.0pt\def\arraystretch{1.5}\begin{array}{lcl}
  u_{i}^k  
 &\overset{\eqref{H-Lip-continuity-fxy}}{\leq}& \frac{\alpha_{i}r_i-\sigma_i}{2}\| \triangle\bx_i^{k+1}\|^2 +   \langle \triangle\bx_i^{k+1}, \bg_i^{k+1} + \bpi_i^{k+1}\rangle   \\
 &\overset{\eqref{sigma-a-r} }{\leq}&- \frac{\sigma_i}{3}\| \triangle\bx_i^{k+1}\|^2 +   \langle \triangle\bx_i^{k+1}, \bg_i^{k+1} + \bpi_i^{k+1}\rangle    \\ 
  &\overset{ \eqref{opt-con-xk1-2} }{=}&- \frac{\sigma_i}{3}\| \triangle\bx_i^{k+1}\|^2 + \langle \triangle\bx_i^{k+1}, \bva_i^{k+1}\rangle     \\ 
   &\overset{\eqref{two-vecs}}{\leq}& -\frac{\sigma _i}{3}\| \triangle\bx_i^{k+1}\|^2+ \frac{5}{\sigma_i}\|\bva_i^{k+1}\|^2 +\frac{\sigma _i}{10}\| \triangle\bx_i^{k+1}\|^2  \\
  &\overset{\eqref{opt-con-xk1-2}}{\leq}&-\frac{7\sigma _i}{30}\| \triangle\bx_i^{k+1}\|^2+  \frac{5\epsilon_i^{k+1}}{\sigma_i} \\
    &\overset{\eqref{iceadmm-sub20}}{\leq}& -\frac{7\sigma _i}{30}\| \triangle\bx_i^{k+1}\|^2 - \frac{5\triangle\epsilon_i^{k+1}}{(1-\nu_{i})\sigma_i}.
    \end{array} 
 \end{eqnarray*}
 For case $i\notin\tauk$,   $ \bpi_i^{k+1}=\bpi_i^{k}$ and $\epsilon_i^{k+1}=\epsilon_i^{k}$ from \eqref{iceadmm-sub5} indicate the above condition is also valid. Therefore, for both cases, we obtain
 \begin{eqnarray}\label{gap-2-1}   
 \arraycolsep=0.5pt\def\arraystretch{1.5}\begin{array}{lcl}
  p_{2}^k = \sum_{i}  u_{i}^k  \leq \sum_{i} (-\frac{7\sigma _i}{30}\| \triangle\bx_i^{k+1}\|^2 - \frac{5\triangle\epsilon_i^{k+1}}{(1-\nu_{i})\sigma_i}) .
    \end{array} 
 \end{eqnarray}
{\underline{Estimate  $p_3^k$.}} We consider two cases: $i\in\tauk$ and $i\notin\tauk$. For case $i\in\tauk$, it is easy to see that
 \begin{eqnarray*} 
 \arraycolsep=1.0pt\def\arraystretch{1.5}  \begin{array}{lcl}
   v_i^k &\overset{\eqref{Def-L}}{=}&     \langle \triangle \blx_i^{k+1} ,\triangle \bpi_i^{k+1}  \rangle
   \overset{\eqref{iceadmm-sub3}}{=}    \frac{1}{\sigma _i}\|\triangle \bpi_i^{k+1} \|^2 \\
   &
   \overset{\eqref{opt-con-xk1-3}}{\leq}&   \frac{6\alpha_{i}^2r_i^2}{5\sigma _i}  \|\triangle\bx_i^{k+1} \|^2- \frac{24}{(1-\nu_{i})\sigma_i} \triangle\epsilon_i^{k+1}\\
    &
   \overset{\eqref{sigma-a-r}}{\leq}&   \frac{4\sigma_i}{30}  \|\triangle\bx_i^{k+1} \|^2- \frac{24}{(1-\nu_{i})\sigma_i} \triangle\epsilon_i^{k+1}.
    \end{array} 
 \end{eqnarray*}
For case $i\notin\tauk$,  the last inequality in the above condition is also valid due to $ \bpi_i^{k+1}{=}\bpi_i^{k}$ and $\epsilon_i^{k+1}{=}\epsilon_i^{k}$ from \eqref{iceadmm-sub5}.  Therefore, for both cases we obtain
  \begin{eqnarray} 
\label{gap-3-1}  
 \arraycolsep=0.5pt\def\arraystretch{1.5}  \begin{array}{lcl}
   p_3^k =  \sum_{i} v_i^k   \leq    \sum_{i}  ( \frac{4\sigma_i}{30}  \|\triangle\bx_i^{k+1} \|^2- \frac{24\triangle\epsilon_i^{k+1}}{(1-\nu_{i})\sigma_i} ).
    \end{array} 
 \end{eqnarray}
 Overall, combining conditions \eqref{three-cases},  \eqref{two-cases-3}, \eqref{two-cases-1},  and \eqref{gap-2-1},  \eqref{gap-3-1},  
  we obtain
  \begin{eqnarray*}  
   \arraycolsep=1.0pt\def\arraystretch{1.5}
  \begin{array}{lll}
   && \L^{k+1}-\L^{k}= p_1^k+p_2^k+p_3^k\\
    &\leq&  - \sum_{i} \left(\frac{\sigma _i}{10} (\|\triangle \bx^{\tk} \|^2+   \|\triangle\bx_i^{k+1}\|^2)-\frac{29\triangle\epsilon_i^{k+1}}{(1-\nu_{i})\sigma_i}\right),
    \end{array} 
 \end{eqnarray*}
 showing  the desired  result.
\end{proof}
  
{Lemma \ref{lemma-decreasing-1} allows us to directly conclude the non-increasing property of sequence $\{ \widetilde\L^k\}$ and vanishing of several gaps  $\triangle \bx^{\tk},\triangle\blx_i^{k+1} ,\triangle \bx^{k+1}_i$, and $\triangle \bpi_i^{k+1}$, as shown by the following results.
}
\begin{lemma}\label{L-bounded-decreasing-1} Suppose that Assumptions \ref{ass-fi} and \ref{ass-f} hold. Every client  $i\in[m]$ chooses $\sigma_i\geq 3\alpha_{i}r_i$ and the sever selects  $\Omega^{\tau_k}$ as Scheme \ref{ass-omega}. Then the following results hold.
 \begin{itemize}
 \item[a)] Sequence $\{ \widetilde\L^k\}$ is non-increasing. 
 \item[b)] $ \widetilde\L^k\geq f(\bx^{\tau_k})\geq f^* >-\infty$ for any $ k\geq1$.
 \item[c)]The limits of all the following terms are zero, namely,  
\begin{eqnarray} \label{limit-5-term-0-inexact}
 \arraycolsep=1.0pt\def\arraystretch{1.5} 
 \begin{array}{lll} (\epsilon_i^{k+1}, \triangle \bx^{\tk},~\triangle\blx_i^{k+1} ,~ \triangle \bx^{k+1}_i,~ \triangle \bpi_i^{k+1})\to 0.\
    \end{array} 
 \end{eqnarray}
 \end{itemize}
\end{lemma}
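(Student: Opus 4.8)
The plan is to establish the three claims in sequence, using the descent estimate of Lemma~\ref{lemma-decreasing-1} together with the basic observations of Lemma~\ref{basic-observations}. Part a) is immediate: the right-hand side of Lemma~\ref{lemma-decreasing-1} is a sum of the non-negative quantities $\tfrac{\sigma_i}{10}(\|\triangle\bx^{\tk}\|^2+\|\triangle\bx_i^{k+1}\|^2)$, so $\widetilde\L^k-\widetilde\L^{k+1}\geq 0$, i.e. $\{\widetilde\L^k\}$ is non-increasing.

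Part b) is where the real work sits, and I expect it to be the main obstacle. The bound $f(\bx^{\tau_k})\geq f^*$ holds by the definition of $f^*$ in \eqref{FL-opt-lower-bound}, so the crux is $\widetilde\L^k\geq f(\bx^{\tau_k})$. I would unfold $\L^k=\sum_i L(\bx^{\tau_k},\bx_i^k,\bpi_i^k)$ from \eqref{Def-L} and convert each local value $\alpha_i f_i(\bx_i^k)$ into the global value $\alpha_i f_i(\bx^{\tau_k})$. Applying the Lipschitz descent inequality \eqref{H-Lip-continuity-fxy} to $f_i$ at the pair $(\bx^{\tau_k},\bx_i^k)$ with the gradient taken at $\bx_i^k$ gives
\[
\alpha_i f_i(\bx_i^k)\geq \alpha_i f_i(\bx^{\tau_k})+\langle \bx_i^k-\bx^{\tau_k},\bg_i^k\rangle-\tfrac{\alpha_i r_i}{2}\|\bx_i^k-\bx^{\tau_k}\|^2.
\]
Summing over $i$ and regrouping the cross terms with the multiplier term of $\L^k$ produces $\langle \bx_i^k-\bx^{\tau_k},\bg_i^k+\bpi_i^k\rangle=\langle \bx_i^k-\bx^{\tau_k},\bva_i^k\rangle$, where $\|\bva_i^k\|^2\leq\epsilon_i^k$ by \eqref{opt-con-xk1-2}; the leftover quadratic term carries the coefficient $\tfrac{\sigma_i-\alpha_i r_i}{2}$, which is positive because $\sigma_i\geq 3\alpha_i r_i$. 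Bounding the cross term with Young's inequality \eqref{two-vecs} (parameter $\sigma_i-\alpha_i r_i$) cancels that quadratic contribution exactly and leaves $\L^k\geq f(\bx^{\tau_k})-\sum_i\tfrac{\epsilon_i^k}{2(\sigma_i-\alpha_i r_i)}$. Using $\sigma_i-\alpha_i r_i\geq\tfrac{2}{3}\sigma_i$ to bound the residual and then checking that the penalty $\tfrac{29}{(1-\nu_i)\sigma_i}$ in the definition \eqref{def-c} of $\widetilde\L^k$ dominates $\tfrac{3}{4\sigma_i}$ (which holds since $\nu_i<1$) yields $\widetilde\L^k\geq f(\bx^{\tau_k})$. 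The delicate point is precisely verifying that the prescribed constant $29/((1-\nu_i)\sigma_i)$ is large enough to absorb the negative inexactness residual.

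Part c) follows by combining a) and b). Since $\{\widetilde\L^k\}$ is non-increasing and bounded below by $f^*$, it converges, so $\widetilde\L^k-\widetilde\L^{k+1}\to 0$. Feeding this into Lemma~\ref{lemma-decreasing-1} and using that every summand is non-negative forces $\triangle\bx^{\tk}\to 0$ and $\triangle\bx_i^{k+1}\to 0$ for each $i$. The vanishing $\epsilon_i^{k+1}\to 0$ is exactly \eqref{opt-con-xk1-4}, whence $\triangle\epsilon_i^{k+1}\to 0$; substituting these into \eqref{opt-con-xk1-3} gives $\triangle\bpi_i^{k+1}\to 0$. It remains to show $\triangle\blx_i^{k+1}=\bx_i^{k+1}-\bx^{\tk}\to 0$. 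For a client selected at step $k$ this is immediate from the multiplier update \eqref{iceadmm-sub3}, since then $\triangle\blx_i^{k+1}=\triangle\bpi_i^{k+1}/\sigma_i$. For a client not selected at step $k$, I would fall back to its last selection time $k_i$ (invariance \eqref{iceadmm-sub5} freezes $\bx_i^{k+1}=\bx_i^{k_i+1}$), write $\triangle\blx_i^{k+1}=\triangle\blx_i^{k_i+1}+(\bx^{\tau_{k_i+1}}-\bx^{\tk})$, and control the second term as a telescoping sum of at most $s_0$ consecutive global increments $\triangle\bx^{\tau}$; Scheme~\ref{ass-omega} guarantees $k-k_i\leq s_0k_0$ via \eqref{scheme-omega-2T}, so this finite sum of vanishing terms tends to $0$. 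Apart from this last bookkeeping for the non-selected clients, part c) is routine once a) and b) are in hand.
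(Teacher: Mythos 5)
Your proposal is correct and follows essentially the same route as the paper's proof: a) directly from Lemma \ref{lemma-decreasing-1}; b) via the descent inequality \eqref{H-Lip-continuity-fxy} at $\bx_i^k$, regrouping $\bg_i^k+\bpi_i^k=\bva_i^k$, Young's inequality, and absorbing the $O(\epsilon_i^k/\sigma_i)$ residual into the $\tfrac{29}{(1-\nu_i)\sigma_i}$ penalty; c) via convergence of $\{\widetilde\L^k\}$, then \eqref{opt-con-xk1-3}--\eqref{opt-con-xk1-4} for $\triangle\bpi_i^{k+1}$, and the fallback to the last selection time $k_i$ with a telescoping sum of at most $s_0$ global increments for unselected clients, exactly as in the paper's display \eqref{gap-xi-xtau}. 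The only differences are cosmetic choices of the Young parameter (yours cancels the quadratic exactly, giving residual $\tfrac{3\epsilon_i^k}{4\sigma_i}$ versus the paper's $\tfrac{3\epsilon_i^k}{2\sigma_i}$), so no gap to report.
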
  
\begin{proof}a) The conclusion follows Lemma \ref{lemma-decreasing-1} immediately. 

b) It follows from \eqref{two-vecs} where $t=\frac{2\sigma_{i}}{3}$ that
\begin{eqnarray} \label{two-vecsss}
 \arraycolsep=1pt\def\arraystretch{1.5}
  \begin{array}{ccll}
 \langle \triangle \blx_i^{k},   \bva_i^k  \rangle \leq  \frac{\sigma_{i}}{3}\|\triangle \blx_i^{k}\|^2+\frac{3}{2\sigma_{i}}\|\triangle \bva_i^k \|^2.
    \end{array} 
 \end{eqnarray} 
 The gradient Lipschitz continuity of $f_i$   implies
\begin{eqnarray*} 
 \arraycolsep=1pt\def\arraystretch{1.5}
  \begin{array}{ccll}
 && \alpha_{i}f_i(\bx^{\tau_k})- \alpha_{i}f_i(\bx_i^{k})\\
  &\overset{\eqref{H-Lip-continuity-fxy}}{\leq}& \langle \triangle \blx_i^{k},  \bg_i^{k} \  \rangle    + \frac{r_i\alpha_{i}}{2}\|\triangle \blx_i^{k}\|^2 \\
  &\overset{\eqref{sigma-a-r}}{\leq}& \langle \triangle \blx_i^{k},  \bg_i^{k} \  \rangle    + \frac{\sigma_{i}}{6}\|\triangle \blx_i^{k}\|^2 \\
 &\overset{\eqref{opt-con-xk1-2}}{=}&  \langle \triangle \blx_i^{k},  \bpi_i^{k} +\bva_i^k  \rangle    + \frac{\sigma_{i}}{6} \|\triangle \blx_i^{k}\|^2\\
  &\overset{\eqref{two-vecsss}}{\leq}&  \langle \triangle \blx_i^{k},  \bpi_i^{k}  \rangle    + \frac{\sigma_{i}}{3}\|\triangle \blx_i^{k}\|^2+\frac{3}{2\sigma_i}\|\bva_i^{k}\|^2 + \frac{\sigma_{i}}{6} \|\triangle \blx_i^{k}\|^2 \\
  &\overset{\eqref{opt-con-xk1-2}}{\leq}&  \langle \triangle \blx_i^{k},  \bpi_i^{k}  \rangle    +\frac{\sigma_{i}}{2}\|\triangle \blx_i^{k}\|^2+\frac{3\epsilon_i^k}{2\sigma_i}.
    \end{array} 
 \end{eqnarray*} 
The above relation and $1>\nu_{i}\geq1/2$ give rise to
\begin{eqnarray*} 
 \arraycolsep=1.0pt\def\arraystretch{1.5}
  \begin{array}{ccll}
 \widetilde \L^{k} 
  &\overset{\eqref{def-c}}{=}&  \sum_{i}   (L(\bx^{\tau_k},\bx_i^{k},\bpi_i^{k})+\frac{29\epsilon_i^k}{(1-\nu_{i})\sigma_i})\\
 &\overset{\eqref{Def-L}}{\geq}&  \sum_{i} (\alpha_{i}  f_i(\bx_i^{k}) + \langle \triangle \blx_i^{k}, \bpi_i^{k}\rangle + \frac{\sigma _i}{2}\|\triangle \blx_i^{k}\|^2+\frac{3\epsilon_i^k}{2\sigma_i})\\ 
  &\geq& \sum_{i}  \alpha_{i}f_i(\bx^{\tau_k}) = f(\bx^{\tau_k})  \geq f^*   \overset{\eqref{FL-opt-lower-bound}}{ >} -\infty.
    \end{array} 
 \end{eqnarray*}
c)  Using  Lemma \ref{lemma-decreasing-1}  and $\widetilde \L^{k}>-\infty$ enables to show  that
 \begin{eqnarray*} 
 \arraycolsep=1.0pt\def\arraystretch{1.5}  
 \begin{array}{lcl}
 &&{\sum}_{k\geq0}  \frac{\sigma_i}{10} \sum_{i}  (\| \triangle \bx^{\tk}  \|^2+\| \triangle\bx^{k+1}_i   \|^2 )\\
&\leq&{\sum}_{k\geq0}  ( \widetilde\L^{k}  -\widetilde\L^{k+1}) =
  \widetilde\L^{0}- f^* < +\infty. 
 \end{array}  
 \end{eqnarray*} 
It means 
 $  \| \triangle \bx^{\tk}   \|\rightarrow0$ and $\| \triangle \bx^{k+1}_i  \|\rightarrow0$, which by \eqref{opt-con-xk1-3} and \eqref{opt-con-xk1-4} derives  $ \| \triangle \bpi_i^{k+1}  \| \rightarrow0$.
%
Finally, for  $i\in\tauk$, it follows from  \eqref{iceadmm-sub3}  that $\|\sigma _i \triangle\blx_i^{k+1}\| = \|  \triangle \bpi_i^{k+1} \| \to 0.$ 
For  $i\notin\tauk$, then $i\in\Omega^{\tau_{k_i+1}}$, where $k_i$ is defined the same as that in the proof of Lemma \ref{basic-observations} a). Based on \eqref{scheme-omega-2T}, we have $\tk -\tau_{k_i+1}\leq s_0$. As a consequence, 
 \begin{eqnarray} \label{gap-xi-xtau}
 \arraycolsep=1.0pt\def\arraystretch{1.5}  
 \begin{array}{lcl}
&&  \| \triangle\blx_i^{k+1}  \|^2 \overset{\eqref{par-invariance}}{=}\| \bx_i^{k_i+1}-\bx^{\tk}  \|^2 \\
&\leq& 2\| \triangle\blx_i^{k_i+1}\|^2+2\|\bx^{\tau_{k_i+1}}-\bx^{\tk}  \|^2 \\ 
&\overset{\eqref{iceadmm-sub3}}{=}& \frac{2}{\sigma_i^2}\| \triangle \bpi_i^{k_i+1} \|^2+2\|\sum_{t= \tau_{k_i+1}}^{\tau_{k+1}-1}(\bx^{ {t+1}}-\bx^{ t})  \|^2 \\ 
&\overset{\eqref{two-vecs}}{\leq}& 
\frac{2}{\sigma_i^2}\| \triangle \bpi_i^{k_i+1} \|^2+2 s_0\sum_{t= \tau_{k_i+1}}^{\tau_{k+1}-1}\|\bx^{ {t+1}}-\bx^{ t}\|^2\\
& \to& 0,
 \end{array}  
 \end{eqnarray} 
where the last relationship is due to $\triangle \bpi_i^{k}\to$ and $\triangle\bx^{\tk}\to0$. The whole proof is finished.
\end{proof}

 \subsection{Proof of  Theorem \ref{global-obj-convergence-inexact}}   
{The sketch of proving results in Theorem \ref{global-obj-convergence-inexact} is as follows: The boundedness of $\{\bx^{\tk}\}$ can be ensured by Lemma \ref{L-bounded-decreasing-1} b) and the coerciveness of $f$. Since $\{\widetilde\L^k\}$ is non-increasing and bounded from below.   Therefore,  whole sequence $\{\widetilde\L^k\}$ converges, which by \eqref{limit-5-term-0-inexact} can show \eqref{L-local-convergence-limit-inexact}. Finally, to show  $\nabla F(W^{k+1}){\to} 0$ as $k{\to}\infty$, we only need to show $\nabla F(W^{\ell+1}) {\to} 0$ as $\ell(\in\K){\to}\infty$ due to \eqref{limit-5-term-0-inexact}, which can be guaranteed by conditions \eqref{opt-con-xk1-1} and \eqref{opt-con-xk1-2}.
}
 \begin{proof} a) By  Lemma \ref{L-bounded-decreasing-1}  that $\widetilde\L^1\geq f(\bx^{\tk})$ and $f$ being coercive,  we can claim the boundedness of sequence $\{\bx^{\tk}\}$ immediately.  This calls forth the boundedness of sequence $\{\bx_i^{k+1}\}$ as $ \triangle\blx_i^{k+1} \rightarrow0$ from \eqref{limit-5-term-0-inexact}, thereby delivering
  \begin{eqnarray*} 
\arraycolsep=0.0pt\def\arraystretch{1.5}
 \begin{array}{lcl}
 \|\bpi_i^{k+1}\|  &\overset{\eqref{opt-con-xk1-2}}{=}&   \|\bva_i^{k+1}-\bg_i^{k+1}\| \\
 &   \leq& \|\bva_i^{k+1}\| +  \|\bg_i^{k+1} - \bg_i^{0}\|+ \|\bg_i^{0}\|  \\
 & \overset{\eqref{opt-con-xk1-2},\eqref{Lip-r}}{\leq}&\sqrt{\epsilon_i^{k+1}}+ \alpha_{i} r_i \|  \bx^{k+1}_i-\bx^{0}_i\|+ \|\bg_i^{0}\|  <+\infty.  
     \end{array} 
 \end{eqnarray*}  
 This shows the boundedness of sequence $\{\bpi_i^{k+1}\}$. Overall,  sequence $\{(\bx^{\tk},W^{k+1},\Pi^{k+1})\}$ is bounded.

b) It follows from  Lemma \ref{L-bounded-decreasing-1} that $\{\widetilde\L^k\}$ is non-increasing and bounded from below.   Therefore,  whole sequence $\{\widetilde\L^k\}$ converges and $\widetilde\L^{k+1}  \to  \L^{k+1}$
due to $  \epsilon_i^{k+1}\rightarrow0$ in \eqref{limit-5-term-0-inexact}.  Again by  \eqref{limit-5-term-0-inexact} and  the boundedness of sequence $\{\bpi_i^{k+1}\}$, we can prove that 
\begin{eqnarray} \label{L-f-pi} 
 \arraycolsep=1.0pt\def\arraystretch{1.5}\begin{array}{lcl}
&& \L^{k+1}- F(W^{k+1}) \\
    &\overset{\eqref{Def-L}}{=}&  \sum_{i}  (  \langle \triangle \blx_i^{k+1}, \bpi_i^{k+1}\rangle +\frac{\sigma _i}{2}\|\triangle \blx_i^{k+1}\|^2 )  \to0.
    \end{array} 
 \end{eqnarray}     
 It follows from Mean Value Theory that
 $$f_i(\bx_i^{k+1})=f_i(\bx^{\tk})+\langle\triangle \blx^{k+1} , \nabla f_i(\bx_t)\rangle,$$ 
 where $\bx_t := (1-t)\bx^{\tk}+t\bx_i^{k+1}$ for some $t\in(0,1)$. Since  $\{\bx^{\tk},\bx_i^{k+1}\}$ is bounded, so is $\bx_t$. This calls  forth $f_i(\bx_i^{k+1})-f_i(\bx^{\tk})\to0$ due to  $\triangle \blx^{k+1}\to0$.  
Using this condition   enables us to obtain
\begin{eqnarray*}  
 \arraycolsep=1.0pt\def\arraystretch{1.5} \begin{array}{lll}
 \L^{k+1}-f(\bx^{\tk})
    &=&  \sum_{i}  ( \alpha_{i}  f_i(\bx_i^{k+1})- \alpha_{i} f_i(\bx^{\tk})\\
    &+&      \langle\triangle \blx^{k+1}, \bpi_i^{k+1}\rangle +\frac{\sigma _i}{2}\|\triangle \blx^{k+1}\|^2  )  \to  0.
    \end{array} 
 \end{eqnarray*}      
c) Let $\ell:=(\tau_{k+1}-1)k_0 \in\K$, then for any $k$, it  has
\begin{eqnarray}\label{tau-tau-s}
 \arraycolsep=1.0pt\def\arraystretch{1.5}
\begin{array}{rcll} 
\ell+1&=& (\tau_{k+1}-1)k_0+1 \leq k+1 \leq \tau_{k+1}k_0,\\ 
\tau_{\ell+1}&=&\lceil (\ell+1)/k_0 \rceil = \lceil  \tau_{k+1}-1-1/k_0 \rceil= \tau_{k+1}.
\end{array} \end{eqnarray}
 Since $\ell\in\K$, we have
\begin{eqnarray*}
  \arraycolsep=1.0pt\def\arraystretch{1.5}
\begin{array}{rcll}
&&\sum_{i}  \bpi_i^{\ell+1} \\
&\overset{\eqref{limit-5-term-0-inexact}}{\to}& \sum_{i}   ( \sigma_i(\triangle \blx_i^{\ell+1}-\triangle \bx_i^{\ell+1})-\triangle \bpi_i^{\ell+1}+ \bpi_i^{\ell+1} )\\
&=&\sum_{i}   ( \sigma_i(\bx_i^{\ell}-\bx^{\tau_{\ell+1}})+\bpi_i^{\ell})\overset{\eqref{opt-con-xk1-1}}{=} 0. 
\end{array} \end{eqnarray*} 
We note that sequence $\{\epsilon_i^{k+1}\}$ is non-increasing and thus obtain ${\epsilon_i^{\ell+1}} \leq{\epsilon_i^{k+1}}$ from \eqref{tau-tau-s}, thereby rendering that
 \begin{eqnarray*} 
 \arraycolsep=1.0pt\def\arraystretch{1.5}  
 \begin{array}{lcl}
&&\|\bpi_i^{k+1}-\bpi_i^{\ell+1}\|^2\\
 &\overset{\eqref{opt-con-xk1-2}}{=}&   \| \bva_i^{k+1}-\bva_i^{\ell+1}-\bg_i^{k+1} +\bg_i^{\ell+1} \|^2  \\
    &\overset{\eqref{opt-con-xk1-2},\eqref{Lip-r}}{\leq}&     3{\epsilon_i^{k+1}}+ 3{\epsilon_i^{\ell+1}} + 3\alpha_{i}^2r_i^2 \| \bx^{k+1}_i-\bx^{\ell+1}_i\|^2\\
     &\overset{\eqref{tau-tau-s}}{=}&   6{\epsilon_i^{k+1}}  + 3\alpha_{i}^2r_i^2 \| \bx^{k+1}_i-\bx^{\tk} + \bx^{\tau_{\ell+1}}- \bx^{\ell+1}_i\|^2\\
     &{\leq}&   6{\epsilon_i^{k+1}}  + 6\alpha_{i}^2r_i^2 (\| \triangle \blx^{k+1}_i\|^2 + \|\triangle\blx^{{\ell+1}}_i\|^2) \overset{\eqref{limit-5-term-0-inexact}}{\to}0.
   \end{array}
  \end{eqnarray*} 
Using the above two conditions immediately derives that
  \begin{eqnarray} \label{limit-K-grad-pi-exact}
  \begin{array}{llllllll}
   \sum_{i}  \bpi_i^{k+1} \to 0.
    \end{array}\end{eqnarray} 
Taking the limit on both sides of \eqref{opt-con-xk1-2} gives us
 \begin{eqnarray} \label{limit-K-grad-i-exact}
\arraycolsep=1.0pt\def\arraystretch{1.5}
  \begin{array}{lcl}  
   \nabla F(W^{k+1}) &=& \sum_{i}  \bg_i^{k+1}\\
   &\overset{\eqref{limit-K-grad-pi-exact}}{\to}&\sum_{i}  (\bg_i^{k+1} +\bpi_i^{k+1})\\
   &\overset{\eqref{opt-con-xk1-2}}{=}&\sum_{i}  \bva_i^{k+1} \overset{\eqref{opt-con-xk1-3}}{\to}0,
   \end{array}
  \end{eqnarray} 
which together with $\triangle\blx_i^{k+1}\rightarrow0$ and the gradient Lipschitz continuity yields that $ \nabla f (\bx^{\tk})=\sum_{i}  \alpha_{i} \nabla f_i(\bx^{\tk})\to 0.$  This completes the whole proof.
\end{proof}

 \subsection{Proof of  Theorem \ref{global-convergence-inexact}}   

 \begin{proof} 
a)  Let $(\bx^{\infty},W^{\infty},\Pi^{\infty})$ be any accumulating point of the sequence, 
 it follows from \eqref{opt-con-xk1-2} and \eqref{opt-con-xk1-3} that
  \begin{eqnarray*}
  \begin{array}{l}
0= \alpha_{i} \nabla f_i(\bx^{\infty}_i) + \bpi_i^{\infty}.
   \end{array}
  \end{eqnarray*} 
By  $ (\bx_i^{k+1}-\bx^{\tk})\rightarrow0$ and \eqref{limit-K-grad-pi-exact}, we have 
 \begin{eqnarray*}
  \begin{array}{l}
0=\bx^{\infty}_i- \bx^{\infty},~~ 0=\sum_{i} \bpi^{\infty}_i.
   \end{array}
  \end{eqnarray*} 
  Therefore, recalling \eqref{opt-con-FL-opt-ver1},  $(\bx^{\infty},W^{\infty},\Pi^{\infty})$  is a stationary point of   (\ref{FL-opt-ver1}) and $\bx^{\infty}$ is a stationary point of   (\ref{FL-opt}). 
  
b) It follows from \cite[Lemma 4.10]{more1983computing},  $\triangle \bx^{\tk} \rightarrow0$ and $\bx^{\infty}$ being isolated that  the whole sequence, $\{\bx^{\tk}\}$ converges to $\bx^{\infty}$, which by $\triangle\blx_i^{k+1}\rightarrow0$ implies that $\{W^{k+1}\}$ also converges to $W^{\infty}$. Finally, this together with \eqref{opt-con-xk1-2} and \eqref{opt-con-xk1-3}   results in the convergence of $\{\Pi^{k+1}\}$.
  \end{proof}

 \subsection{Proof of  Corollary \ref{L-global-convergence}}   
\begin{proof} a) The convexity of $f$ and the optimality of $\bx^*$ yields
    \begin{eqnarray}  \label{convexity-optimality}
   \begin{array}{lll}
f(\bx^{\tau_k}) \geq  f(\bx^{*}) \geq  f(\bx^{\tau_k}) + \langle \nabla f(\bx^{\tau_k}), \bx^{*}{-}\bx^{\tau_k} \rangle.~ 
    \end{array} 
 \end{eqnarray} 
 Theorem \ref{global-obj-convergence-inexact} ii) states that   
   \begin{eqnarray*}  
   \begin{array}{lll}
 {\lim}_{k \rightarrow \infty} \nabla F(W^{k})  ={\lim}_{k \rightarrow\infty} \nabla f(\bx^{\tau_k}) =0.
    \end{array} 
 \end{eqnarray*} 
 Using this and the boundedness of $\{\bx^{\tau_k}\}$ from Theorem \ref{global-convergence-inexact}, we take the limit of both sides of \eqref{convexity-optimality} to derive that  $ f(\bx^{\tau_k})\rightarrow f(\bx^{*})$, which recalling  Theorem \ref{global-obj-convergence-inexact} i) yields \eqref{L-global-convergence-limit}. 
 
b) The conclusion follows from  Theorem \ref{global-convergence-inexact} ii) and the fact that  the stationary points are equivalent to optimal solutions if $f$ is convex.
 
c)   The strong convexity of $f$ means that
 there is a positive constance $\nu$ such that 
 \begin{eqnarray*}  
   \arraycolsep=1.0pt\def\arraystretch{1.5}
  \begin{array}{llll}
  f(\bx^{\tau_k}) -f( \bx^*)
  &\geq&  \langle \nabla f( \bx^*),\bx^{\tau_k}-\bx^*\rangle + \frac{\nu}{2}\|\bx^{\tau_k}-\bx^*\|^2\\
  &=&  \frac{\nu}{2}\|\bx^{\tau_k}-\bx^*\|^2, 
    \end{array} 
 \end{eqnarray*}
where the equality is due to \eqref{grad-x-*=0}. Taking limit of both sides of the above inequality  shows $\bx^{\tau_k}\rightarrow\bx^*$ since $ f(\bx^{\tau_k}) \rightarrow f( \bx^*)$. This together with \eqref{limit-5-term-0-inexact} yields $ \bx_i^k\rightarrow\bx^*$. Finally, $ \bpi_i^k\rightarrow\bpi_i^*$ because of   
 \begin{eqnarray*}  
   \arraycolsep=1.0pt\def\arraystretch{1.5}
  \begin{array}{lcl}\|\bpi_i^{k}-\bpi_i^{*}\|^2&\overset{\eqref{opt-con-FL-opt-ver1},\eqref{opt-con-xk1-2}}{=}&\|\bva_i^{k} + \bg_i^{k} - \alpha_{i} \nabla f_i(\bx^*)\|^2\\
  & \overset{\eqref{Lip-r}}{\leq}&  2\alpha_{i}^2r_i^2\| \bx^{k}_i-\bx^*\|^2+ 2{\epsilon_i^k}\to0,
   \end{array} 
 \end{eqnarray*} 
 displaying the desired result.
\end{proof}

 \subsection{Proof of  Theorem \ref{complexity-thorem-gradient-inexact}}   
 {The proof focuses on $k\in\K$ and aims at estimating term $\|\nabla f(\bx^{\tau_{k +1}})\|^2 $ by decomposing it as 
\begin{eqnarray} \label{decom-grad-f-tau}
   \arraycolsep=1pt\def\arraystretch{1.5}
\begin{array}{rcll}
&& \|\nabla f(\bx^{\tau_{k +1}})\|^2 \\&\leq& 3\|\nabla f(\bx^{\tau_{k +1}})-\sum_{i} \bg_i^{k +1}\|^2\\
&+& 3\|\sum_{i} \bg_i^{k +1}-\sum_{i} \bg_i^{k }\|^2+3\|\sum_{i} \bg_i^{k }\|^2\\
&{\leq}& m\sum_{i} 3\alpha_i^2r_i^2(\| \triangle \blx_i^{{k +1}}\|^2+\| \triangle \bx_i^{{k +1}}\|^2)+ 3\|\sum_{i} \bg_i^{k }\|^2\\
&{\leq}& m\sum_{i} \frac{\sigma_i^2}{3} (\| \triangle \blx_i^{{k +1}}\|^2+   \| \triangle \bx_i^{{k +1}}\|^2)+3\|\sum_{i} \bg_i^{k }\|^2,
\end{array}\end{eqnarray}
where the last two inequalities used the gradient Lipschitz continuity and \eqref{sigma-a-r}.    Then we estimate each term on the right-hand side. The detailed proof is given as follows.
 }
{
\begin{proof}\underline{Estimate $\sum_{i} \|\sigma _i\triangle \bx_i^{k+1} \|^2$.}  Recalling Lemma \ref{lemma-decreasing-1},
\begin{eqnarray*}   
 \arraycolsep=1.0pt\def\arraystretch{1.5}
 \begin{array}{lll}
 \sum_{i} \frac{\sigma _i}{10}(\|\triangle \bx^{\tk} \|^2      +\|\triangle \bx_i^{k+1} \|^2) \leq \widetilde\L^k- \widetilde\L^{k+1}, 
    \end{array}  
 \end{eqnarray*}  
 which  by letting $\overline\sigma:=\max_{i\in[m]}\sigma_i$ results in
 \begin{eqnarray}\label{gap-w-w-L-L}   
 \arraycolsep=1.0pt\def\arraystretch{1.5}
 \begin{array}{lll}
 && \max\{ \sum_{i}  \|\sigma _i \triangle \bx^{\tk} \|^2, \sum_{i}    \|\sigma _i\triangle \bx_i^{k+1} \|^2\}\\
 &\leq& 10 \overline\sigma  \sum_{i} \frac{\sigma _i}{10}(\|\triangle \bx^{\tk} \|^2      +\|\triangle \bx_i^{k+1} \|^2)\\
 &\leq&10 \overline\sigma( \widetilde\L^k- \widetilde\L^{k+1}).
    \end{array}  
 \end{eqnarray} 
\underline{Estimate $\sum_{i}\|\sigma_i \triangle\blx_i^{k +1}  \|^2$.}  For any $i\in\tauk$, by the third inequality in \eqref{gap-pi-k-eps}, we have 
\begin{eqnarray} \label{opt-con-xk1-3-1}
 \arraycolsep=0.5pt\def\arraystretch{1.5}
\begin{array}{lcl}
\|\triangle   \bpi_i^{k+1}\|^2  
&\overset{\eqref{gap-pi-k-eps}}{\leq}&   \frac{6\alpha_{i}^2r_i^2}{5}  \|\triangle\bx_i^{k+1} \|^2+12(\epsilon_i^{k+1}+\epsilon_i^{k})\\
&\overset{(\ref{sigma-a-r})}{\leq}&\frac{2\sigma_i^2}{15} \|\triangle\bx_i^{k+1} \|^2+12(\epsilon_i^{k+1}+\epsilon_i^{k})\\
&\overset{(\ref{iceadmm-sub20})}{\leq}&\frac{2\sigma_i^2}{15} \|\triangle\bx_i^{k+1} \|^2+ 24\epsilon_i^{k}.
   \end{array}
  \end{eqnarray}
Then it follows  
\begin{eqnarray} \label{gap-xi-xtau-109}
 \arraycolsep=1.0pt\def\arraystretch{1.5}  
 \begin{array}{lcl}
 \|\sigma _i \triangle\blx_i^{k +1}\|^2 &\overset{ \eqref{iceadmm-sub3}}{=}& \|  \triangle \bpi_i^{k +1} \|^2\\
&\overset{\eqref{opt-con-xk1-3-1}}{\leq}&\frac{2\sigma_i^2}{15} \|\triangle\bx_i^{k +1} \|^2 +{24}\epsilon_i^{k }.
 \end{array}  
 \end{eqnarray} 
For  $i\notin\tauk$, let $k_i$ be defined similarly to that in the proof of Lemma \ref{basic-observations} a) at step $k $. Then $i\in\Omega^{\tau_{k_i+1}}$ and \eqref{opt-con-xk1-3-1} holds for $k_i$, which allows us to obtain
\begin{eqnarray} \label{gap-xi-xtau-11}
 \arraycolsep=1.0pt\def\arraystretch{1.5}  
 \begin{array}{lcl}
  \|\sigma_i \triangle\blx_i^{k +1}  \|^2 
  &\overset{\eqref{gap-xi-xtau}}{\leq} & 2\| \triangle \bpi_i^{k_i+1} \|^2\\
&+&2s_0\sigma_i^2\sum_{t=\tau_{k_i+1}}^{\tau_{k+1}-1 }\| \bx^{ {t+1}}-\bx^{ {t}}\|^2\\
&\overset{\eqref{opt-con-xk1-3-1}}{\leq}& \frac{4\sigma_i^2}{15} \|\triangle\bx_i^{k_i+1} \|^2 + {48} \epsilon_i^{k_i}\\
&+&2s_0\sigma_i^2\sum_{t=\tau_{k_i+1}}^{\tau_{k+1} -1}\| \bx^{ {t+1}}-\bx^{ {t}}\|^2.
 \end{array}  
 \end{eqnarray} 
Based on \eqref{scheme-omega-2T}, we have $\tau_{k +1} -\tau_{k_i+1}\leq s_0$ and thus $k\geq k_i \geq k -(s_0-1)k_0-1\geq k -s_0k_0,$ which by the non-increasing properties of $\{\widetilde\L^{k}\}$ and $\{\epsilon_i^k\}$  imply that
\begin{eqnarray} \label{Lki-Lks0}
 \arraycolsep=1.0pt\def\arraystretch{1.5}  
 \begin{array}{lcl}
\widetilde\L^{k} \leq \widetilde\L^{k_i}  \leq  \widetilde\L^{k -s_0k_0},~~~ \epsilon_i^{k}\overset{\eqref{iceadmm-sub5}}{=}\epsilon_i^{k_i},  i\notin\tauk.
 \end{array}  
 \end{eqnarray} 
By (\ref{gap-xi-xtau-109}) and (\ref{gap-xi-xtau-11}) we can obtain
\begin{eqnarray} \label{gap-xi-xtau-112}
 \arraycolsep=0.5pt\def\arraystretch{1.75}  
 \begin{array}{lcl}
 &&  \sum_{i}\|\sigma_i \triangle\blx_i^{k +1}  \|^2 \\
 &=&
  \sum_{i\in\tauk}\|\sigma_i \triangle\blx_i^{k +1}  \|^2 + \sum_{i\notin\tauk}\|\sigma_i \triangle\blx_i^{k +1}  \|^2\\
  & {\leq}&
  \sum_{i\in\tauk}(\frac{2\sigma_i^2}{15} \|\triangle\bx_i^{k +1} \|^2 +{24}\epsilon_i^{k })\\
  &+& \sum_{i\notin\tauk} (\frac{4\sigma_i^2}{15} \|\triangle\bx_i^{k_i+1} \|^2 + {48} \epsilon_i^{k_i}\\
&+& 2s_0\sigma_i^2\sum_{t=\tau_{k_i+1}}^{\tau_{k+1} -1}\| \bx^{ {t+1}}-\bx^{ {t}}\|^2 )\\ 
 & {\leq}&
  \sum_{i}(\frac{2\sigma_i^2}{15} \|\triangle\bx_i^{k +1} \|^2 + \frac{4\sigma_i^2}{15} \|\triangle\bx_i^{k_i+1} \|^2 )\\
  &+& \sum_{i}  (2s_0\sigma_i^2\sum_{t=\tau_{k_i+1}}^{\tau_{k+1} -1}\| \bx^{ {t+1}}-\bx^{ {t}}\|^2+ {48} \epsilon_i^{k}). 
 \end{array}  
 \end{eqnarray} 
\underline{Estimate $\|\sum_{i} \bg_i^{k }\|^2 $.} For any $k \in\K$, we have
\begin{eqnarray*} 
   \arraycolsep=1.0pt\def\arraystretch{1.5}
\begin{array}{rcll} 
\sum_{i} \bg_i^{k } &\overset{\eqref{opt-con-xk1-2}}{=}& \sum_{i} (\bva_i^{k }-  \bpi_i^{k })\\
&\overset{\eqref{opt-con-xk1-1}}{=}& \sum_{i} (\bva_i^{k } + \sigma_i(\bx_i^{k }-\bx^{\tau_{k +1}}))\\
&{=}& \sum_{i} (\bva_i^{k } + \sigma_i( \triangle \blx_i^{k +1}- \triangle \bx_i^{k +1})),
\end{array} \end{eqnarray*} 
which by $\|\bva_i^{k}\|^2\leq \epsilon_i^k $ from \eqref{opt-con-xk1-2} leads to
\begin{eqnarray} \label{sum-g-i}
   \arraycolsep=1.0pt\def\arraystretch{1.5}
\begin{array}{rcll} 
\|\sum_{i} \bg_i^{k }\|^2 
 &\overset{\eqref{two-vecs}}{\leq}&   3 m \sum_{i} \epsilon_i^{k }\\
&+& 3m\sum_{i} \sigma_i^2\|  \triangle \blx_i^{k +1}\|^2\\
& +&3m\sum_{i} \sigma_i^2\| \triangle \bx_i^{k +1}\|^2.
\end{array} \end{eqnarray} 
Now, combining \eqref{decom-grad-f-tau} and  \eqref{sum-g-i} yields that 
\begin{eqnarray} \label{error-grad-f-w}
   \arraycolsep=0pt\def\arraystretch{1.5}
\begin{array}{rcll}
&&\|\nabla f(\bx^{\tau_{k +1}})\|^2\\ 
&{\leq}& \sum_{i} \frac{10m\sigma_i^2}{3} (\| \triangle \blx_i^{{k +1}}\|^2{+} \| \triangle \bx_i^{{k +1}}\|^2){+}9 m \sum_{i} \epsilon_i^{k }\\
&\overset{\eqref{gap-xi-xtau-112}}{\leq}&
 \frac{34m}{9}\sum_{i}  \|\sigma_i\triangle\bx_i^{k +1} \|^2 \\
&+& \frac{8m}{9}\sum_{i} \|\sigma_i\triangle\bx_i^{k_i+1} \|^2+ {169m}\sum_{i}   \epsilon_i^{ k}\\
&+&
\frac{20ms_0}{3}\sum_{t=\tau_{k_i+1}}^{\tau_{k+1} -1} \sum_{i} \|\sigma_i( \bx^{ {t+1}}-\bx^{ {t}})\|^2\\
&\overset{\eqref{gap-w-w-L-L}}{\leq}& \frac{340m\overline{\sigma}}{9}(\widetilde\L^k- \widetilde\L^{k+1})+\frac{80m\overline{\sigma}}{9}(\widetilde\L^{k_i}- \widetilde\L^{k_i+1})\\
&+&  {169m}\sum_{i}   \epsilon_i^{ k} +
\frac{200ms_0\overline{\sigma}}{3}\sum_{t=k_i+1}^{k}(\widetilde\L^{t}- \widetilde\L^{t+1})\\
&{\leq}& \frac{340m\overline{\sigma}}{9}(\widetilde\L^k- \widetilde\L^{k+1})+{169m}\sum_{i}   \epsilon_i^{ k}\\
&+& 
\frac{200ms_0\overline{\sigma}}{3}(\widetilde\L^{k_i}- \widetilde\L^{k+1})\\
&\overset{\eqref{Lki-Lks0} }{\leq}&  \frac{940m s_0\overline{\sigma}}{9}(\widetilde\L^{ k -s_0k_0}- \widetilde\L^{k+1})+    {169m}\sum_{i}   \epsilon_i^{ k}.
\end{array} \end{eqnarray}
The non-increasing property of $\{\widetilde\L^{k}\}$ suffices to
\begin{eqnarray} \label{L-2s-L-2s}
   \arraycolsep=1.0pt\def\arraystretch{1.5}
\begin{array}{rcll}
\widetilde\L^{(p+1)k_0}-\widetilde\L^{pk_0+1}  &\leq& 0, ~~\forall p\geq 0.
\end{array} \end{eqnarray}
We note that $\{\epsilon_i^k\}$ is non-increasing. Moreover, under Scheme \ref{ass-omega},  for every  $s_0k_0$ steps, each client $i\in[m]$ is chosen al least once to update their parameters by \eqref{iceadmm-sub20}-\eqref{iceadmm-sub4}, which means that
$\epsilon_i^{(a+1)s_0k_0}\leq \nu_i \epsilon_i^{as_0k_0}$ for all $i\in[m]$ and $a=1,2,\cdots$, thereby leading to 
\begin{eqnarray} \label{sum-eps-i}
   \arraycolsep=1pt\def\arraystretch{1.75}
\begin{array}{rcll} 
 \sum_{p=s_0}^{t} \epsilon_i^{pk_0}  &\leq&\sum_{a=1}^{\lceil  {t}/{s_0}\rceil } s_0 \epsilon_i^{as_0k_0}\\
 &\leq&   \frac{s_0\epsilon_i^{s_0k_0} (1-\nu_i^{\lceil  {t}/{s_0}\rceil})}{1-\nu_i} \leq \frac{s_0\epsilon_i}{1-\nu_i}.
\end{array} \end{eqnarray}
Finally, using (\ref{L-2s-L-2s}) and (\ref{sum-eps-i}) enables us to derive
\begin{eqnarray*} 
   \arraycolsep=1.0pt\def\arraystretch{1.5}
\begin{array}{rcll}
&&(t-s_0+1)\min_{p=s_0,s_0+1,\cdots,t}\|\nabla f(\bx^{\tau_{pk_0 +1}})\|^2\\
 &\leq&  \sum_{p=s_0}^{t} \|\nabla f(\bx^{\tau_{pk_0 +1}})\|^2\\
&\overset{\eqref{error-grad-f-w}}{\leq}&  \sum_{p=s_0}^{t}  \frac{940m  s_0\overline{\sigma}}{9}(\widetilde\L^{(p-s_0)k_0}- \widetilde\L^{pk_0+1})\\ 
&+& \sum_{p=s_0}^{t} {169m}\sum_{i}   \epsilon_i^{pk_0} \\
&{\leq}&    \sum_{i} \frac{169m s_0\epsilon_i}{ 1-\nu_i } +   \frac{940m s_0\overline{\sigma}}{9}(\widetilde\L^{0}+\widetilde\L^{k_0}+\cdots + \widetilde\L^{s_0k_0})\\
&-&   \frac{940m s_0\overline{\sigma}}{9}(\widetilde\L^{(t-s_0)k_0+1}+\widetilde\L^{(t-s_0+1)k_0+1}+\cdots+\widetilde\L^{tk_0+1})\\
   &{\leq}&    \frac{940m\overline{\sigma} s_0(s_0+1)}{9}   (\widetilde\L^{0} - f^*)+\sum_{i} \frac{169ms_0\epsilon_i}{1-\nu_i}= c, 
\end{array} \end{eqnarray*}
where the last inequality is due to $ \widetilde\L^0\geq\widetilde\L^k \geq f^*$ for any $ k\geq1$ from Lemma \ref{L-bounded-decreasing-1} b). Now by letting $k=tk_0$, we have
\begin{eqnarray*} 
   \arraycolsep=1.0pt\def\arraystretch{1.5}
\begin{array}{rcll}
&&\min_{s=1,3,\cdots,k}\|\nabla f(\bx^{\tau_{s +1}})\|^2 \\
&\leq &\min_{p=s_0,s_0+1,\cdots,t}\|\nabla f(\bx^{\tau_{pk_0 +1}})\|^2\\
 &\leq&  \frac{c}{t-s_0+1} \leq \frac{c k_0}{k -s_0k_0}, 
\end{array} \end{eqnarray*}
showing the desired result.
\end{proof}
}

\end{document}